\newtheorem*{theorem*}{Theorem}
\newtheorem{maintheorem}{Theorem}[section]
\newtheorem{theorem}{Theorem}[section]
\newtheorem{lemma}[theorem]{Lemma}
\newtheorem{proposition}[theorem]{Proposition}
\newtheorem{corollary}[theorem]{Corollary} 
\theoremstyle{definition}
\newtheorem{definition}[theorem]{Definition}
\newtheorem{example}[theorem]{Example}
\newtheorem{remark}[theorem]{Remark}
\newtheoremstyle{myitemstyle}						
	{}			
	{}			
	{}			
	{}			
	{}			
	{.}			
	{ }			
	{}			
\theoremstyle{myitemstyle}
\newtheorem{myitemthm}{}
\newcommand{\R}{\mathbb{R}}
\newcommand{\Z}{\mathbb{Z}}
\newcommand{\ZZ}{\mathbb{Z}}
\renewcommand{\C}{\mathbb{C}}
\newcommand{\PP}{\mathbb{P}}
\newcommand{\RR}{\mathbb{R}}
\newcommand{\A}{\mathbb{A}}
\renewcommand{\G}{\mathbb{G}}
\newcommand{\FF}{\mathbb{F}}
\newcommand{\TT}{\mathbb{T}}
\newcommand{\calE}{\mathcal{E}}
\newcommand{\calH}{\mathcal{H}}
\newcommand{\calL}{\mathcal{L}}
\newcommand{\calM}{\mathcal{M}}
\newcommand{\calO}{\mathcal{O}}
\newcommand{\calU}{\mathcal{U}}
\newcommand{\calX}{\mathcal{X}}
\newcommand{\mc}[1]{{\mathcal{#1}}}
\DeclareMathOperator{\Pic}{Pic}
\DeclareMathOperator{\Hom}{Hom}
\DeclareMathOperator{\Aut}{Aut}
\DeclareMathOperator{\val}{val}
\DeclareMathOperator{\characteristic}{char}
\DeclareMathOperator{\Trop}{Trop}
\DeclareMathOperator{\id}{id}
\DeclareMathOperator{\GL}{GL}
\DeclareMathOperator{\PGL}{PGL}
\DeclareMathOperator{\Div}{Div}
\DeclareMathOperator{\Sym}{Sym}
\DeclareMathOperator{\SL}{SL}
\DeclareMathOperator{\Alb}{Alb}
\DeclareMathOperator{\Jac}{Jac}
\DeclareMathOperator{\PDiv}{PDiv}
\DeclareMathOperator{\Bun}{Bun}
\DeclareMathOperator{\ord}{ord}
\DeclareMathOperator{\PL}{\mathcal{PL}}
\DeclareMathOperator{\ch}{ch}
\DeclareMathOperator{\td}{td}
\DeclareMathOperator{\gr}{gr}
\DeclareMathOperator{\rk}{rk}
\DeclareMathOperator{\calSpec}{\mathcal{S}pec}
\let\Rat\relax
\DeclareMathOperator{\Rat}{Rat}
\let\div\relax
\DeclareMathOperator{\div}{div}
\newcommand{\an}{\mathrm{an}}
\newcommand{\ind}{\mathrm{ind}}
\newcommand{\trop}{{\mathrm{trop}}}
\newcommand{\free}{{\mathrm{free}}}
\newcommand{\Ga}{\Gamma}
\newcommand{\tGa}{\widetilde{\Gamma}}
\newcommand{\tG}{\widetilde{G}}
\newcommand{\te}{\widetilde{e}}
\newcommand{\tp}{\widetilde{p}}
\newcommand{\tv}{\widetilde{v}}
\title{Principal bundles on metric graphs: the $\GL_n$ case} 
\date{}
\author{Andreas Gross}
\address{Institut f\"ur Mathematik, Goethe--Universit\"at Frankfurt,
60325 Frankfurt am Main, Germany}
\email{gross@math.uni-frankfurt.de}
\author{Martin Ulirsch}
\address{Institut f\"ur Mathematik, Goethe--Universit\"at Frankfurt,
60325 Frankfurt am Main, Germany}
\email{ulirsch@math.uni-frankfurt.de}
\author{Dmitry Zakharov}
\address{Department of Mathematics, Central Michigan University, Mount Pleasant, MI 48859, USA}
\email{dvzakharov@gmail.com}
\begin{document}

\maketitle

\begin{abstract} Using the notion of a root datum of a reductive group $G$ we propose a tropical analogue of a principal $G$-bundle on a metric graph. We focus on the case $G=\GL_n$, i.e.\ the case of vector bundles. Here we give a characterization of vector bundles in terms of multidivisors and use this description to prove analogues of the Weil--Riemann--Roch theorem and the Narasimhan--Seshadri correspondence. We proceed by studying the process of tropicalization. In particular, we show that the non-Archimedean skeleton of the moduli space of semistable vector bundles on a Tate curve is isomorphic to a certain component of the moduli space of semistable tropical vector bundles on its dual metric graph. 
\end{abstract}

\setcounter{tocdepth}{1}
\tableofcontents


\section*{Introduction}

Since its introduction to the mathematical world in~\cite{MikhalkinZharkov}, the theory of divisors and linear systems on metric graphs has stimulated a multitude of further work during the last decade. It naturally aligned with the simultaneously developed theory of divisors, linear systems, and chip-firing on finite graphs introduced in~\cite{BakerNorine}, whose first highlight is the Riemann--Roch theorem in~\cite{BakerNorine} (also see~\cite{MikhalkinZharkov, GathmannKerber} for its generalization to metric graphs). Using Baker's method of specialization~\cite{Baker_specialization}, new applications to the classical subject of Brill--Noether theory on an algebraic curve were found (see~\cite{BakerJensen} for an overview). We mention the groundbreaking paper~\cite{CDPR}, which includes a new proof of the Brill--Noether theorem and point to ~\cite{Pflueger, JensenRanganathan, CookPowellJensen_gonality, CookPowellJensen_HurwitzBrillNoether, LenUlirsch} for (Hurwitz--)Brill--Noether theorems with gonality conditions (also see~\cite{Larson_refinedBrillNoether} for a non-tropical approach) and~\cite{FarkasJensenPayne_Kodaira, FarkasJensenPayne_nonabelian} for applications to the birational geometry of moduli spaces, which both take us significantly beyond what was known classically. 

What all of these works have in common is that they always focus on the case of divisors or equivalently of line bundles, i.e.\ the (very much abelian) rank one situation. In this article we endeavour towards a (non-abelian) higher rank generalization and use the notion of a root datum (coming from the classification of reductive groups) to propose a tropical analogue of principal $G$-bundles for a reductive group $G$. 

Let $\Gamma$ be a metric graph and let $\calH_\Gamma$ be the sheaf of real-valued harmonic functions with integer slopes on $\Gamma$. When $G=\GL_n$, our definition amounts to considering $S_n\ltimes\calH_\Gamma^n$-torsors on $\Gamma$, an object that we refer to as a \emph{vector bundle of rank} $n$ on $\Gamma$. This definition agrees with the geometric version of tropical vector bundles as defined in~\cite{TropChern}. In Section~\ref{section_multidivisors} we show that any vector bundle of rank $n$ on $\Gamma$ is the pushforward of a line bundle along a suitable free cover of $\Ga$ of degree $n$, and use this observation to describe isomorphism classes vector bundles on $\Gamma$ as linear equivalence classes of so-called \emph{multidivisors} on $\Gamma$, a notion inspired by Weil's matrix divisors coming from~\cite{Weil}. This description, for example, allows for the classification of vector bundles on metric graphs of genus zero in analogy with the Birkhoff--Grothendieck theorem~\cite{Grothendieck_vectorbundlesonP1} (see Example~\ref{example_BirkhoffGrothendieck} below) and of genus one in analogy with Atiyah's classication in ~\cite{Atiyah_ellipticvectorbundles} (see Section~\ref{section_Atiyah} below).

\subsection*{A tropical Weil--Riemann--Roch theorem} In~\cite{Weil}, Weil generalized the classical Riemann--Roch theorem to the case of vector bundles on a compact Riemann surface. From a modern perspective, this is now an immediate corollary of the Hirzebruch--Riemann--Roch theorem. Given a vector bundle $E$ on a compact Riemann surface $X$ of genus $g$, the Hirzebruch--Riemann--Roch formula states that
\begin{equation*}
    h^0(X,E)-h^1(X,E)=\int_X \ch(E)\cdot \td(X) \ .
\end{equation*}
The Chern character of $E$ and the Todd class of $X$ are given respectively by
\begin{equation*}
\ch(E)=c_0(E)+c_1(E)=\rk(E)+\deg(E)\cdot[X]
\end{equation*}
and 
\begin{equation*}\td(X)=c_0(T_X)+\frac{1}{2}c_1(T_X)=1-(g-1)\cdot [X],
\end{equation*}
while $h^1(X,E)=h^0(X,E^\ast\otimes\omega_X)$ by Serre duality. Hence we integrate over $X$ to obtain
\begin{equation}\label{eq_WeilRiemannRochclassical}
    h^0(X,E)-h^0(X,E^\ast\otimes\omega_X)=\deg(E) - \rk(E) \cdot (g-1) \ .
\end{equation}

In Section~\ref{section_WeilRiemannRoch} we use multidivisors to define a generalization of the \emph{Baker-Norine rank} $r_\Gamma$ (originally introduced in \cite{BakerNorine}) to vector bundles  on a compact metric graph and prove an analogue of \eqref{eq_WeilRiemannRochclassical}. 

\begin{maintheorem}[Weil--Riemann--Roch theorem, Theorem \ref{thm_WeilRiemannRoch}]\label{mainthm_WeilRiemannRoch}
Let $E$ be a vector bundle of rank $n$ on a compact and connected metric graph $\Gamma$ of genus $g$. Writing $\omega_\Gamma$ for the line bundle associated to the canonical divisor $K_\Gamma$, we have
\begin{equation*}
    r_\Gamma(E)-r_\Gamma(E^\ast\otimes\omega_\Gamma)=\deg(E)-n\cdot (g-1) \ .
\end{equation*}
\end{maintheorem}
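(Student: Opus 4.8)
The plan is to deduce the statement from the ordinary tropical Riemann--Roch theorem for line bundles on metric graphs, using the description of a vector bundle as a pushforward along a free cover established in Section~\ref{section_multidivisors}. So I would first choose a free cover $\pi\colon\tGa\to\Gamma$ of degree $n$ and a line bundle $L$ on $\tGa$ with $E\cong\pi_*L$. Since $\Gamma$ is compact, $\tGa$ is a compact metric graph, but it need not be connected; write $\tGa=\bigsqcup_i\tGa_i$ for its connected components, let $g_i$ denote the genus of $\tGa_i$, and set $L_i=L|_{\tGa_i}$.

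The second step is to set up the dictionary relating the invariants of $E$ on $\Gamma$ to those of $L$ on $\tGa$. Because $\pi$ is unramified, the tropical Riemann--Hurwitz formula gives $K_{\tGa}=\pi^\ast K_\Gamma$; comparing Euler characteristics of $\tGa$ and $\Gamma$ then yields $\sum_i(g_i-1)=n(g-1)$, and the same unramifiedness makes pushforward along $\pi$ degree--preserving, so $\deg E=\deg L=\sum_i\deg L_i$. Moreover the identities $(\pi_*M)^\ast\cong\pi_*(M^\ast)$ and $\pi_*(M^\ast)\otimes\omega_\Gamma\cong\pi_*(M^\ast\otimes\omega_{\tGa})$ — the tropical shadows of Grothendieck duality for a finite unramified morphism and of the projection formula, which should be part of, or an easy consequence of, the multidivisor bookkeeping of Section~\ref{section_multidivisors} — identify $E^\ast\otimes\omega_\Gamma$ with $\pi_*(L^\ast\otimes\omega_{\tGa})$. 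The decisive point, and the step I expect to be the main obstacle, is the behaviour of the rank under pushforward: one must prove $r_\Gamma(\pi_*M)=\sum_i r_{\tGa_i}(M_i)$ for every line bundle $M$ on $\tGa$. Depending on how $r_\Gamma$ is defined via multidivisors this is either essentially the definition or a lemma; in the latter case it rests on the correspondence between effective multidivisors on $\Gamma$ and effective divisors on $\tGa$ being degree-- and effectivity--preserving, and on the answer being independent of the chosen cover.

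Granting the dictionary, the final step is routine: apply the tropical Riemann--Roch theorem for line bundles \cite{GathmannKerber,MikhalkinZharkov} on each component,
\[
r_{\tGa_i}(L_i)-r_{\tGa_i}(K_{\tGa_i}-L_i)=\deg L_i-(g_i-1),
\]
and sum over $i$: the left-hand side becomes $r_\Gamma(E)-r_\Gamma(E^\ast\otimes\omega_\Gamma)$ by the rank-compatibility and the duality identification, while the right-hand side becomes $\deg E-\sum_i(g_i-1)=\deg E-n(g-1)$ by the degree and Riemann--Hurwitz identities. Beyond the rank-compatibility statement, the one subtlety worth flagging is that $\tGa$ may be disconnected, which forces the rank on the cover to be the \emph{sum} of the component ranks rather than the naive rank obtained by subtracting arbitrary effective divisors of fixed total degree — the latter would be the \emph{minimum} of the component ranks and would already fail Riemann--Roch in rank two over a genus-one graph — and which is why the Riemann--Hurwitz bookkeeping above is phrased componentwise.
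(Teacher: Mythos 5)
Your proposal is correct and follows essentially the same route as the paper: the Baker--Norine rank of a vector bundle is \emph{defined} there via a multidivisor representative (so your ``rank-compatibility'' step is built into the definition), the identification of $E^\ast\otimes\omega_\Gamma$ with the pushforward of $K_{\tGa}-D$ is exactly Proposition~\ref{prop_multidivisors}, and the final step is the rank-one Riemann--Roch theorem on the possibly disconnected cover (Proposition~\ref{prop_nonconnectedRR}) combined with $\chi(\tGa)=n\,\chi(\Gamma)$. The only cosmetic discrepancy is your normalization on disconnected graphs --- the paper sets $r_{\tGa}(D)=\sum_i r_{\tGa_i}(D|_{\tGa_i})+k-1$ rather than the bare sum you posit --- but the additive constant $k-1$ cancels in the difference $r_\Gamma(E)-r_\Gamma(E^\ast\otimes\omega_\Gamma)$, so your conclusion is unaffected.
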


The proof of Theorem~\ref{mainthm_WeilRiemannRoch} uses multidivisors and a special case of the Riemann--Hurwitz formula to deduce the statement from the rank one Riemann--Roch theorem of~\cite{BakerNorine,MikhalkinZharkov, GathmannKerber} for possibly disconnected metric graphs.

\subsection*{Semistable bundles and a Narasimhan--Seshadri correspondence} The Abel--Jacobi theorem on a compact Riemann surface $X$ provides a natural correspondence between topology (the Jacobian) and algebraic geometry (the Picard group $\Pic_0(X))$. In order to generalize this correspondence to the higher rank case, Narasimhan and Seshadri~\cite{NarasimhanSeshadri}, building on ideas of Weil~\cite{Weil}, prove that there is a natural one-to-one correspondence between irreducible unitary representations of the fundamental group of $X$ (or, alternatively, irreducible unitary local systems on $X$) and stable vector bundles of degree zero on $X$. 

The Abel--Jacobi theorem has a natural analogue on a metric graph $\Ga$ (see~\cite{MikhalkinZharkov, BakerFaber}), as recalled in Section~\ref{section_AbelJacobi} below. In Section~\ref{section_NarasimhanSeshadri} we prove an analogue of the Narasimhan--Seshadri correspondence in our framework. 

A section of the sheaf $S_n\ltimes \calH_{\Ga}^n$ can be viewed as a $\GL_n(\TT)$-valued function on $\Ga$, and an element of $H^1(S_n\ltimes \calH_{\Ga}^n)$ defines a rank-$n$ vector bundle on $\Ga$. We interpret sections of the natural subsheaf $S_n\ltimes \RR^n$ of locally constant functions as functions valued in a tropical \emph{unitary group}, by showing that the corresponding vector bundles are semistable. 
Indeed, let $\lambda$ be a local system on $\Ga$ with fiber $S_n\ltimes \R^n$. This datum produces a free degree $n$ cover $f:\tGa\to \Ga$ as well as a local system $\widetilde{\lambda}$ with fiber $\R$ (both unique up to isomorphism) such that $f_\ast \widetilde{\lambda}=\lambda$. At the same time, $\lambda$ also naturally defines a vector bundle $E(\lambda)$ with constant transition maps.

\begin{maintheorem}[Narasimhan--Seshadri correspondence, Theorem~\ref{thm_NarasimhanSeshadri}]\label{mainthm_NarasimhanSeshadri}
Let $\Gamma$ be a compact and connected metric graph. 
\begin{enumerate}[(i)]
    \item  A vector bundle $E$ of rank $n$ on $\Gamma$ is associated to an $S_n\ltimes\R^n$-local system $\lambda$ if and only if it is semistable and of degree zero. The vector bundle $E(\lambda)$ is stable if and only if the corresponding $S_n$-representation of $\pi_1(\Ga)$ is indecomposable.
    \item Two $S_n\ltimes\R^n$-local systems $\lambda_i$ (for $i=1,2$) give rise to the same vector bundle if and only if they define the same cover $f\colon\widetilde \Gamma\rightarrow\Gamma$ and the induced classes of the $\widetilde{\lambda}_i$ in $\Jac(\widetilde{\Gamma})$ are equal.
\end{enumerate}
\end{maintheorem}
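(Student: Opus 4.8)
The overall strategy is to reduce everything to the rank-one theory, where the statement becomes the tropical Abel--Jacobi theorem recalled in Section~\ref{section_AbelJacobi}, using the description of vector bundles as pushforwards of line bundles along free covers from Section~\ref{section_multidivisors}. Unwinding the construction, $E(\lambda)=f_\ast L(\widetilde\lambda)$, where $f\colon\widetilde\Gamma\to\Gamma$ is the free degree $n$ cover attached to $\lambda$ and $L(\widetilde\lambda)\in\Pic^0(\widetilde\Gamma)$ is the degree-zero line bundle, with constant transition maps, associated to the rank-one local system $\widetilde\lambda$. Two observations are used throughout. First, passing to the associated $S_n$-torsor (pushout along the quotient $S_n\ltimes\calH_\Gamma^n\twoheadrightarrow S_n$) recovers the cover $f$ functorially from $E(\lambda)$, and in fact from any rank-$n$ vector bundle. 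Second, for a free cover $f$ one has $\deg f_\ast M=\deg M$, and $f_\ast$ sends degree-zero line bundles to semistable bundles of slope zero: base change along $\widetilde\Gamma\times_\Gamma\widetilde\Gamma$, whose diagonal is a connected component since $f$ is unramified, exhibits $f^\ast f_\ast M$ as a direct sum of pushforwards of degree-zero line bundles along free covers of strictly smaller degree, so an induction on the degree of the cover together with the identity $\mu(f^\ast F)=n\cdot\mu(F)$ for subbundles $F\subseteq f_\ast M$ yields semistability of $f_\ast M$.

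For part~(i): if $E$ has constant transition maps then $E=f_\ast L(\widetilde\lambda)$ with $L(\widetilde\lambda)$ of degree zero on each component of $\widetilde\Gamma$, so $\deg E=0$ and $E$ is semistable by the second observation. Conversely, suppose $E$ is semistable of degree zero; by Section~\ref{section_multidivisors} we may write $E=g_\ast M$ with $g\colon\widetilde\Gamma\to\Gamma$ the associated free cover and $M$ a line bundle on $\widetilde\Gamma$. For each connected component $\widetilde\Gamma_i$ of $\widetilde\Gamma$ the summand $g_{i\ast}(M|_{\widetilde\Gamma_i})$ of $E=g_\ast M$ is a subbundle, so semistability gives $\deg(M|_{\widetilde\Gamma_i})\le 0$; as these degrees sum to $\deg E=0$, each vanishes and $M\in\Pic^0(\widetilde\Gamma)$. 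Rank-one Abel--Jacobi then provides a rank-one $\R$-local system $\widetilde\lambda$ on $\widetilde\Gamma$ with $L(\widetilde\lambda)\cong M$, and $\lambda:=g_\ast\widetilde\lambda$ satisfies $E(\lambda)\cong E$. For the stability statement one must describe the proper subbundles of slope zero of $E(\lambda)=f_\ast L$ combinatorially: a disconnected cover yields a direct sum decomposition of $E(\lambda)$, while a connected one (equivalently, an indecomposable $S_n$-representation of $\pi_1(\Gamma)$) should admit no such subbundle, forcing stability.

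For part~(ii), the implication from right to left is immediate, since a common cover $f$ and equality of the classes of $\widetilde\lambda_1,\widetilde\lambda_2$ in $\Jac(\widetilde\Gamma)=\Pic^0(\widetilde\Gamma)$ give $L(\widetilde\lambda_1)\cong L(\widetilde\lambda_2)$ and hence $f_\ast L(\widetilde\lambda_1)\cong f_\ast L(\widetilde\lambda_2)$. Conversely, an isomorphism $E(\lambda_1)\cong E(\lambda_2)$ induces an isomorphism of the underlying $S_n$-torsors by the first observation, hence an isomorphism of the covers $f_1\cong f_2=:f$; after fixing this identification it remains to see that $f_\ast L_1\cong f_\ast L_2$ with $L_i\in\Pic^0(\widetilde\Gamma)$ forces $L_1\cong L_2$, the residual ambiguity by automorphisms of $f$ being exactly what is absorbed into ``the same cover''. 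This is the injective half of the multidivisor dictionary of Section~\ref{section_multidivisors}: vector bundles up to isomorphism correspond to multidivisors up to linear equivalence, the multidivisor of $E(\lambda_i)$ is the class of $(f,[\widetilde\lambda_i])$, and $E(\lambda_1)\cong E(\lambda_2)$ then unwinds to exactly the asserted condition.

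I expect the genuinely new input---beyond the rank-one Abel--Jacobi theorem and the formal properties of pushforward---to be the analysis of subbundles of a pushforward $f_\ast L$ in the tropical category, where a subbundle need not split off and quotients do not behave like those of vector spaces: this underlies both the semistability of $f_\ast L$ in part~(i) and the stability-versus-indecomposability dichotomy, and is where I anticipate the main difficulty.
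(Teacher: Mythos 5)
Your overall route---reducing to the rank-one Abel--Jacobi statement via the description of rank-$n$ bundles as pushforwards of line bundles along free degree-$n$ covers---is the same as the paper's, and your treatment of part~(ii) and of the direction ``semistable of degree zero $\Rightarrow$ constant transition maps'' in part~(i) matches the paper's argument (the paper handles $n=1$ by the long exact sequence \eqref{eq:les} and then restricts to connected components of $\tGa$ exactly as you do). The genuine gap is the stability statement in part~(i). You reduce it to the claim that a connected cover ``should admit no such subbundle'' of slope zero, and you flag this as the main difficulty precisely because you believe that ``a subbundle need not split off.'' That belief is backwards in this setting: the transition matrices of a tropical vector bundle are generalized permutation matrices, an invertible block-triangular matrix over $\TT$ is automatically block-diagonal, and hence \emph{every} subbundle splits (Proposition~\ref{prop_subbundlesplit}). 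From this the paper deduces Proposition~\ref{prop_stability}: stable $=$ indecomposable, and a sum of indecomposables is semistable iff all summands have equal slope. Combined with the fact that indecomposable bundles correspond to connected covers (Proposition~\ref{prop_multidivisors}), this settles the stability dichotomy in one line. Without this input your proof of the second sentence of (i) is incomplete.

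The same two propositions also make your base-change induction for the semistability of $f_\ast M$ (decomposing $f^\ast f_\ast M$ over the components of $\tGa\times_\Ga\tGa$ and using $\mu(f^\ast F)=n\cdot\mu(F)$) unnecessary: $f_\ast L$ with $L$ of degree zero on each component of $\tGa$ is a direct sum of indecomposables of slope zero, hence semistable by Proposition~\ref{prop_stability}(ii). Your induction is a correct-looking import of the classical algebro-geometric argument and would probably go through tropically, but it is a detour, and it still does not yield the stability claim. Replace both it and the ``should'' by appeals to Propositions~\ref{prop_subbundlesplit} and~\ref{prop_stability} and the proof closes.
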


We point out that all representations of $\pi_1(\Gamma)$ into $S_n\ltimes \R^n$ are finite direct sums of irreducible representations. So, contrary to the classical situation, we do not need to assume semisimplicity on the representation side for our tropical result to hold.

\subsection*{The process of tropicalization} Our approach to vector bundles on a metric graph is completely independent on any kind of base field. In general, however, the notion of a vector bundle on an algebraic curve strongly depends on the chosen base field. So one should not expect there to be a natural tropicalization of an arbitrary algebraic to a tropical vector bundle, at least as defined here. We can see this via a simple dimension count. As we will see in Section~\ref{sec:GLtrop}, the tropical linear group $\GL_n(\TT)$ has real dimension $n$, not $n^2$, and for this reason the moduli space of rank-$n$ vector bundles on a metric graph $\Ga$ of genus $g$ has dimension
$$
\dim_{\RR} \Bun_n(\Ga)=n(g-1)+1\ .
$$
On the other hand, for a projective algebraic curve $X$ the non-stacky dimension of the moduli space of rank $n$ vector bundles is $n^2(g-1)+1$. Hence for $g\geq 2$ we cannot expect the tropicalization map to be defined for all vector bundles. Instead, we consider vector bundles that are linearized with respect to the $n$-dimensional subgroup $S_n\ltimes\G_m^n\subset \GL_n$ of generalized permutation matrices.

Let $X$ be a Mumford curve over an algebraically closed non-Archimedean field $K$ and consider a non-Archimedean skeleton $\Gamma_X$ of the Berkovich analytic space $X^{an}$ in the sense of~\cite{Berkovich_book}. Denote by $\Bun_{S_n\ltimes\G_m^n}(X)$ the moduli stack of $S_n\ltimes\G_m^n$-linearized vector bundles of rank $n$ on $X$ and $\Bun_{S_n\ltimes\G_m^n}^{\free}(X)$ for the open substack of \emph{freely $S_n\ltimes\G_m^n$-linearized} vector bundles of rank $n$ on $X$ (see Section~\ref{section_linearizedvectorbundles} below). Write $\Bun_{S_n\ltimes\G_m^n}^{\free}(X)^{\an}$ for the Berkovich analytification of $\Bun_{S_n\ltimes\G_m^n}^{\free}(X)$ and $\Bun_n(\Gamma)$ for the moduli space of vector bundles on $\Gamma_X$, as introduced in Section~\ref{section_tropicalmoduli}. We construct a natural tropicalization map 
\begin{equation*}
    \trop\colon\Bun_{S_n\ltimes\G_m^n}^{\free}(X)^{\an}\longrightarrow\Bun_n(\Gamma_X)
\end{equation*}
that, on $K$-valued points, associates to a freely $S_n\ltimes\G_m^n$-linearized vector bundle on $X$ a vector bundle on $\Gamma_X$ (essentially by taking valuations of the transition maps).

From a different perspective, the rigidification of the moduli stack $\Bun_{S_n\ltimes\G_m^n}^{\free}(X)$ naturally admits the structure of a disjoint union of a finite quotient of torsors over abelian varieties. So, using Raynaud's non-Archimedean uniformization, we find that there is a natural strong deformation retraction map 
\begin{equation*}\rho\colon \Bun_{S_n\ltimes \G_m^n}^{\free}(X)^{\an}\longrightarrow  \Sigma\big(\Bun_{S_n\ltimes \G_m^n}^{\free}(X)\big)
\end{equation*}
onto a closed subset of $\Bun_{S_n\ltimes \G_m^n}^{\free}(X)^{\an}$, its \emph{non-Archimedean skeleton} (see~\cite[Section 6.5]{Berkovich_book} for details). These two constructions are naturally compatible.

\begin{maintheorem}[Theorem \ref{thm_Buntrop=skel}]\label{mainthm_skel=trop}
Let $X$ be a Mumford curve over $K$. There is a natural isomorphism
\begin{equation*}
    J\colon \Bun_n(\Gamma_X)\xlongrightarrow{\sim} \Sigma\big(\Bun_{S_n\ltimes \G_m^n}^{\free}(X)\big)\end{equation*}
that makes the diagram 
\begin{equation*}
    \begin{tikzcd}
        & \Bun_{S_n\ltimes \G_m^n}^{\free}(X)^{\an} \arrow[ld,"\trop"'] \arrow[rd,"\rho"] &\\ 
        \Bun_n(\Gamma_X) \arrow[rr,"J"',"\sim"] & & \Sigma\big(\Bun_{S_n\ltimes \G_m^n}^{\free}(X)\big)
    \end{tikzcd}
\end{equation*}
commute. 
\end{maintheorem}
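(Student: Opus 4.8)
The plan is to build the isomorphism $J$ out of the analogous statements for line bundles, transported along the free-cover description of vector bundles from Section~\ref{section_multidivisors}. First I would recall that a freely $S_n\ltimes\G_m^n$-linearized rank-$n$ vector bundle on $X$ is, by the very definition of the linearization, the same datum as an $S_n$-cover $\widetilde X\to X$ (necessarily étale over the generic point, and compatible with a skeleton $\Gamma_{\widetilde X}$ of $\widetilde X^{\an}$ lying over $\Gamma_X$, since $X$ is a Mumford curve and the cover is tame) together with a line bundle $L$ on $\widetilde X$. This decomposes $\Bun_{S_n\ltimes\G_m^n}^{\free}(X)$ as a disjoint union, indexed by the (finitely many, up to isomorphism) free degree-$n$ covers $f\colon\widetilde X\to X$, of quotients by $\Aut(f)$ of the relative Picard stack $\Pic(\widetilde X/X)$; after rigidification each component is a finite quotient of a torsor over the abelian variety $\Jac(\widetilde X)$, which is exactly the structure invoked before the theorem. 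On the tropical side, the same bookkeeping applied to $\Gamma_X$ (using that every free cover of $\Gamma_X$ of degree $n$ lifts to a free cover of $X$, and conversely every free cover of $X$ tropicalizes to one of $\Gamma_X$) identifies $\Bun_n(\Gamma_X)$ with the corresponding disjoint union of finite quotients of $\Jac(\Gamma_{\widetilde X})$-torsors.

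Next I would handle each component separately and reduce to the rank-one case. For a fixed free cover $f\colon\widetilde X\to X$, the non-Archimedean skeleton of $\Pic^d(\widetilde X)^{\an}$ is canonically $\Jac(\Gamma_{\widetilde X})$ (a torsor over it for $d\ne 0$), by Raynaud uniformization: this is the classical fact that the skeleton of the analytification of an abelian variety with semistable (here totally degenerate, as $\widetilde X$ is again a Mumford curve) reduction is the real torus obtained from the tropicalization of its period lattice, and under this identification the Berkovich retraction $\rho$ agrees with the tropicalization map $\trop$ on $\Pic(\widetilde X)$, which sends a line bundle to the class of the valuation of its transition data — i.e.\ precisely the tropical Abel--Jacobi picture recalled in Section~\ref{section_AbelJacobi}. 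This is the commuting triangle of the theorem at the level of line bundles on $\widetilde X$; I would cite the relevant statement (e.g.\ the known compatibility of tropicalization with the Berkovich skeleton for Jacobians of Mumford curves) rather than reprove it. Taking the $\Aut(f)$-quotient is compatible with all three maps because the quotient is by a finite group acting on everything in sight, and analytification, skeleton-formation, and tropicalization all commute with finite quotients; assembling over the finite index set of covers then yields $J$ and the commutative diagram.

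The remaining point is to check that $J$ is an \emph{isomorphism of spaces with the right extra structure}, not merely a bijection: I would verify it is a homeomorphism onto the skeleton with its polyhedral/piecewise-linear structure, which follows component-by-component from the corresponding statement for Jacobians, and that it is independent of the auxiliary choices (the chosen skeleton $\Gamma_X$, representatives of covers, etc.) because both source and target are canonically attached to $X$. The main obstacle I anticipate is the bookkeeping at the boundary between components — making precise that the decomposition of $\Bun_{S_n\ltimes\G_m^n}^{\free}(X)$ by free covers is compatible, under $\trop$ and under $\rho$, with the decomposition of $\Bun_n(\Gamma_X)$ by tropical free covers, including the fact that a $K$-point of a component indexed by $f$ really does tropicalize into the component indexed by the tropicalization of $f$ (this uses that the cover $\widetilde X\to X$ is unramified, so its skeleton is genuinely an $n$-fold free cover of $\Gamma_X$ of the same genus, and no collapsing occurs), together with the matching of the two rigidification/quotient procedures. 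Once the indexing sets and the group actions are shown to correspond, the core of the proof is the rank-one Jacobian statement applied covering-by-covering.
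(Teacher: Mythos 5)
Your proposal follows essentially the same route as the paper's proof: reduce to the rank-one case via the decomposition of both sides into components indexed by free degree-$n$ covers, invoke the Baker--Rabinoff identification of $\Jac(\Gamma_{\widetilde X})$ with the skeleton of $\Jac(\widetilde X)^{\an}$ (translated to each $\Pic_d$ by a base point), and then pass to the $\Aut(f)$-quotients using that skeleton formation and tropicalization commute with pullback along deck transformations. The bookkeeping point you flag at the end — that every free cover of $\Gamma_X$ arises from an \'etale cover of a prestable model and conversely — is exactly what the paper records in the remark following the theorem, so the argument is complete and matches the paper's.
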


In Section~\ref{section_tropicalization} we also show that the process of tropicalization is naturally compatible with forming direct sums, tensor products, duals, and taking determinants, and that a higher rank version of Baker's specialization~\cite{Baker_specialization} inequality holds.

\subsection*{The case of a Tate curve} Let $K$ be an algebraically closed non-Archimedean field of characteristic zero and $X$ a Tate curve over $K$, a smooth projective curve of genus one, whose analytification is given by $X^{\an}=\G_m^{\an}/q^\Z$ with $\val(q)>0$. Then the minimal non-Archimedean skeleton $\Gamma_X$ is isomorphic to a circle of length $\val(q)$. In this case, our tropicalization map can be extended to the whole good moduli space $M_{n,d}(X)$ (as constructed in~\cite{Tu_semistablebundles} using results of~\cite{Atiyah_ellipticvectorbundles}), 
whose $K$-points are in natural one-to-one correspondence with equivalence classes of semistable vector bundles of rank $n$ and degree $d$ on $X$. This moduli space is isomorphic to a suitable symmetric power of $X$, as shown in~\cite{Tu_semistablebundles}, and thus admits a polystable model and a natural non-Archimedean skeleton $\rho\colon M_{n,d}(X)^{\an}\rightarrow \Sigma\big(M_{n,d}(X)\big)$ in the sense of Berkovich~\cite{Berkovich_analytic} (see~\cite{BrandtUlirsch} but also~\cite{Shen_Lefschetz} for details). 

The tropical counterpart of $M_{n,d}(X)$ is the moduli space $M_{n,d}(\Gamma_X)$ of isomorphism classes of semistable vector bundles on $\Gamma_X$, which is explicitly described in Example~\ref{ex:g=1ss}. In Theorem~\ref{thm_Oda} below we use ideas of~\cite{Oda_elliptic},~\cite{PolishchukZaslow}, and~\cite[Theorem 2.18]{BBDG} to prove a characterization of indecomposable vector bundles on a Tate curve that is equivalent to Atiyah's classification. This result allows us to expand on the above construction in Section~\ref{section_Tatecurvetropicalization} in order to construct a natural tropicalization map
\begin{equation*}
    \trop\colon M_{n,d}(X)^{\an}\longrightarrow M_{n,d}(\Gamma_X) 
\end{equation*}
that lands in a component $M_{n,d}^\oplus(\Gamma_X)$ of $M_{n,d}(\Gamma_X)$, which may be identified with a suitable symmetric power of $\Gamma_X$.

\begin{maintheorem}[Theorem~\ref{thm_Tatecurve}]\label{mainthm_Tatecurve}
Let $X$ be a Tate curve. There is a natural isomorphism $J\colon M_{n,d}^\oplus(\Gamma_X)\xrightarrow{\sim}\Sigma\big(M_{n,d}(X)\big)$ that makes the diagram 
\begin{equation*}
    \begin{tikzcd}
        & M_{n,d}(X)^{\an} \arrow[ld,"\trop"'] \arrow[rd,"\rho"] &\\ 
        M_{n,d}^\oplus(\Gamma_X) \arrow[rr,"J"',"\sim"] & & \Sigma\big(M_{n,d}(X)\big)
    \end{tikzcd}
\end{equation*}
commute.
\end{maintheorem}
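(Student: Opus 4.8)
The plan is to reduce Theorem~\ref{thm_Tatecurve} to the known description of the non-Archimedean skeleton of a symmetric power of a curve as the tropical symmetric power of its skeleton, by making both sides of the asserted isomorphism explicit as symmetric powers. Write $h=\gcd(n,d)$ and $n=hn'$, $d=hd'$ with $\gcd(n',d')=1$. On the algebraic side, by~\cite{Tu_semistablebundles} (refining~\cite{Atiyah_ellipticvectorbundles}) every semistable bundle of rank $n$ and degree $d$ on $X$ is, up to $S$-equivalence, a direct sum of $h$ stable bundles of rank $n'$ and degree $d'$, each determined by a point of $X$ via the Atiyah (Fourier--Mukai) dictionary; sending a bundle to the resulting unordered $h$-tuple identifies $M_{n,d}(X)\cong\Sym^h(X)$, and Theorem~\ref{thm_Oda} is precisely the bookkeeping of indecomposable summands over the Tate curve that makes this work in our setting. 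On the tropical side, the genus-one classification of Section~\ref{section_Atiyah} and Example~\ref{ex:g=1ss} identifies $M_{n,d}^\oplus(\Gamma_X)$ with $\Sym^h(\Gamma_X)$: a point of this component is a direct sum $\bigoplus_{j=1}^h E_{x_j}$, where $E_x$ is the stable tropical bundle of rank $n'$ and degree $d'$ attached to $x\in\Gamma_X$ (a pushforward of a line bundle along the connected $n'$-fold cover of the circle $\Gamma_X$, the line bundle being pinned down by the tropical Abel--Jacobi map on $\Gamma_X$), and the identification records the tuple $\{x_1,\dots,x_h\}$.

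Next I would invoke~\cite{BrandtUlirsch}: since $\Gamma_X$ is the minimal skeleton of the Tate curve $X$, the retraction $X^{\an}\to\Gamma_X$ is the tropicalization map $\trop_X$, and passing to symmetric powers identifies the Berkovich skeleton $\Sigma\big(\Sym^h(X)\big)$ with $\Sym^h(\Gamma_X)$, compatibly with the retractions. Combining this with $\Sym^h(X^{\an})=(\Sym^h X)^{\an}$ and the identification $M_{n,d}(X)\cong\Sym^h(X)$ gives a commutative square
\begin{equation*}
\begin{tikzcd}
M_{n,d}(X)^{\an}\arrow[r,"\rho"]\arrow[d,"\sim"'] & \Sigma\big(M_{n,d}(X)\big)\arrow[d,"\sim"]\\
\Sym^h\big(X^{\an}\big)\arrow[r,"\Sym^h\trop_X"'] & \Sym^h(\Gamma_X)
\end{tikzcd}
\end{equation*}
and I define $J\colon M_{n,d}^\oplus(\Gamma_X)=\Sym^h(\Gamma_X)\xrightarrow{\ \sim\ }\Sigma\big(M_{n,d}(X)\big)$ to be the inverse of the right-hand vertical isomorphism.

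The triangle in the statement then commutes as soon as one knows that the tropicalization map $\trop\colon M_{n,d}(X)^{\an}\to M_{n,d}^\oplus(\Gamma_X)$ constructed in Section~\ref{section_Tatecurvetropicalization} becomes $\Sym^h\trop_X$ under the left-hand vertical identification (then $\rho=J\circ\trop$ is immediate from the square). This should be essentially built into the construction of $\trop$: the tropicalization of a semistable bundle is read off from the valuations of the gluing data of its stable summands, and under the Atiyah--Fourier--Mukai dictionary these valuations are exactly the coordinates of the corresponding points of $X^{\an}$, so tropicalizing the bundle amounts to applying $\trop_X$ to those points one at a time.

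I expect the main obstacle to be exactly this last compatibility: one has to carry the Atiyah/Oda classification (in the form of Theorem~\ref{thm_Oda}) together with the Fourier--Mukai-type correspondence through the valuation process and verify on the nose that the ``spectral divisor on $\Gamma_X$'' attached to $\trop(E)$ is the pointwise $\trop_X$-image of the spectral divisor of $E$ on $X$ -- including on the locus of non-simple, in particular non-freely $S_n\ltimes\G_m^n$-linearized, bundles, where one must pass to the associated graded and use that $M_{n,d}(X)$ parametrizes $S$-equivalence classes. A secondary and more routine point is to check that the identification $\Sigma(\Sym^h X)\cong\Sym^h(\Gamma_X)$ of~\cite{BrandtUlirsch} is compatible with the polystable model used to define $\rho\colon M_{n,d}(X)^{\an}\to\Sigma(M_{n,d}(X))$ in the statement, both arising from the standard semistable model of the symmetric power of the Tate curve; and on the freely linearized locus Theorem~\ref{thm_Buntrop=skel} serves as a consistency check, since there the two constructions of $\trop$ must agree.
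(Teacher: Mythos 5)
Your reduction is the same as the paper's: identify both sides with $h$-th symmetric powers, invoke the compatibility of skeletons and tropicalization with symmetric powers from~\cite{BrandtUlirsch}, and thereby reduce to the coprime case $h=1$. But the step you yourself flag as ``the main obstacle'' --- that under the identifications $M_{n',d'}(X)\simeq X$ and $M^{\oplus}_{n',d'}(\Gamma_X)\simeq\Gamma_X$ the map $\trop$ of Section~\ref{section_Tatecurvetropicalization} becomes $\trop_X$ --- is exactly the content of the theorem in the coprime case, and your proposal only asserts it (``this should be essentially built into the construction of $\trop$''). It is not built in: Tu's identification $M_{n',d'}(X)\simeq\Pic_{d'}(X)\simeq X$ sends $E(n',d')\otimes M$ to $M^{\otimes n'}\otimes\calO_X(p)$, while the tropical identification of Section~\ref{section_Atiyah} goes through the multiplication-by-$n'$ map on the circle and the pushforward along the $n'$-fold cover; matching the two requires both the normalization statement $\trop\big(E(n',d')\big)=E^{\trop}(n',d')$ (Proposition~\ref{prop_tropE}, which itself needs the computation of $\det(f_*\calO_{X_{n'}})$ and the order-$(n',2)$ twist $M$ with trivial tropicalization) and an actual argument that the two Abel--Jacobi-type parametrizations correspond under valuation.

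The paper closes this gap with a determinant trick that is absent from your proposal: by Atiyah's Corollary to Theorem 7, a stable bundle of coprime rank and degree is determined by its determinant, so both horizontal identifications factor as $M_{n',d'}\xrightarrow{\det}\Pic_{d'}\xrightarrow{\sim} X$ (respectively $\Gamma_X$); the left square then commutes because tropicalization commutes with determinants (Proposition~\ref{prop_tropbasicoperations}, via the multidivisor formalism of Proposition~\ref{prop_multidivisors}), and the right square is the Baker--Rabinoff theorem for $\Pic_{d'}$. You should either reproduce this determinant argument or supply an explicit verification that the ``spectral point'' of $\trop(E)$ is $\trop_X$ of the spectral point of $E$; as written, the heart of the proof is missing. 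A smaller point: your worry about non-freely-linearized bundles and $S$-equivalence classes is handled in the paper by defining $\trop$ on $M_{n,d}(X)^{\an}$ directly through the Jordan--H\"older decomposition and Theorem~\ref{thm_Oda} (every stable summand is a pushforward from the cyclic cover $X_{n'}$, which is always a free cover of Tate curves), so no separate treatment of a bad locus is needed.
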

Theorem~\ref{mainthm_Tatecurve}, in particular, implies that the tropicalization map is continuous, surjective onto $M_{n,d}^\oplus(\Gamma_X)$, and proper, since the retraction $\rho$ has all these properties. In Section~\ref{section_ellipticspecialization}, we prove a refinement of the higher rank specialization inequality. In Sections~\ref{section_BrillNoether} and~\ref{section_generalizedTheta}, this allows us to describe the tropicalization of natural Brill--Noether loci and generalized $\Theta$-divisors in the moduli space $M_{n,0}(X)$ that were introduced classically in~\cite{Tu_semistablebundles}.

In~\cite{KontsevichSoibelman} Kontsevich and Soibelman propose a non-Archimedean incarnation of the SYZ-fibration coming from mirror symmetry. The basic idea is that for a Calabi-Yau variety $X$ the retraction of $X^{\an}$ to its \emph{essential skeleton} in the sense of~\cite{KontsevichSoibelman, MustataNicaise} should function as a natural fibration in affinoid tori. The essential skeleton of the Tate curve is its minimal skeleton and in~\cite[Proposition 6.1.11]{BrownMazzon} the authors show that the formation of essential skeletons is compatible with taking symmetric powers. So Theorem~\ref{mainthm_Tatecurve} also provides us with another explicit example of a non-Archimedean SYZ-fibration for the Calabi-Yau variety $M_{n,d}(X)$. 

By~\cite[Proposition 4.24]{FrancoGarciaPradaNewstead}, the moduli space of semistable Higgs bundles of degree $d$ on an elliptic curve $X$ is naturally isomorphic to a product of $M_{n,d}(X)$ with the symmetric power $\Sym^n(\A^1)$. So Theorem~\ref{mainthm_Tatecurve} also provides us with a non-Archimedean SYZ-fibration for the moduli of semistable Higgs bundles on a Tate curve $X$. We point out that, since $X$ is its own mirror dual and $\GL_n$ is its own Langlands dual, the moduli space of semistable Higgs bundles of rank $n$ will be its own mirror as well.

\subsection*{Further discussion, complements, and remarks}

Theorems~\ref{mainthm_skel=trop} and~\ref{mainthm_Tatecurve} are two new additions to a long line of results identifying tropical moduli spaces with non-Archimedean skeletons of the Berkovich analytic spaces associated to their algebraic counterparts. We refer the reader to~\cite{BakerRabinoff} for the case of Jacobians and to~\cite{ACP} for the moduli space of curves, as the first two examples. These results establish a tight dictionary between tropical and algebraic geometry and lie of the heart of many  applications. They also justify that our synthetic definition of tropical objects (in our case vector bundles) is not just hot air, but is actually related to the corresponding classical notion in a non-trivial way.

In this article we focus on the $\GL_n$-case. We also expect analogues of Theorems~\ref{mainthm_skel=trop} and~\ref{mainthm_Tatecurve} to hold for other reductive groups $G$. The technical input for such a result will most likely come from the work of Laszlo~\cite{Laszlo_elliptic} and Fr{\u{a}}{\c{t}}il{\u{a}}~\cite{Fratila_revisitingelliptic}, where they describe the moduli space of semistable principal $G$-bundles as a moduli space of $T$-linearized principal bundles for a big torus $T$ in $G$ (up to a quotient by the Weyl group). A careful analysis of this situation from a tropical point of view will appear in a follow-up article.

A step towards a more comprehensive theory in higher genus appears to be closely related to the construction of polystable models of the moduli space of vector bundles over a semistable one-parameter degeneration of a smooth projective curve. We refer the reader to the work of Nagaraj and Seshadri~\cite{NagarajSeshadriI,NagarajSeshadriII, Seshadri_Triestelecturenotes} for the already very intricate story, when the special fiber has only one node, building on the work of Gieseker~\cite{Gieseker} in rank two. An underlying problem, which might have prevented further generalizations so far, seems to lie in the fact that, by~\cite{TeodorescuBundlesOnChains, MartensThaddeus_Grothendieck}, an analogue of the Birkhoff--Grothendieck theorem holds on a rational nodal curve only when the dual graph is a chain or a cycle. As soon as the dual graph has vertices of valence $>2$ an immediate generalization of the Birkhoff--Grothendieck theorem is false (see in particular~\cite[Example 4.5]{MartensThaddeus_Grothendieck} for an illustration of the central difficulty). 

While generalizations of our story to the higher genus situation appears to be challenging, a generalization of Theorem~\ref{mainthm_Tatecurve} to the case of abelian varieties seems very much within reach. The crucial point here is that Miyanishi~\cite{Miyanishi} and Mukai~\cite{Mukai} provide us with a classification of homogeneous vector bundles on abelian varieties in the spirit of Atiyah's work~\cite{Atiyah_ellipticvectorbundles}. As a coarse heuristic to explain our expectation, we note that the fundamental group of an abelian variety is abelian, whereas the fundamental group of a curve of genus $g\geq 2$ is not. Tropical vector bundles on abelian varieties might eventually also provide us with a new perspective on the logarithmic approach to Donaldson--Thomas theory introduced in \cite{MaulikRanganathan}.


We have already mentioned that our approach is the same as the one introduced by Allermann in~\cite{TropChern}. In a similar vein, what we propose also appears to be compatible with the theory of tropical vector bundles on a toric tropical scheme developed in~\cite{MinchevaJunTolliver}. A precise relationship would, however, require the theory in~\cite{MinchevaJunTolliver} to be generalized to a local situation (possibly by introducing an analogue of rigid-analytic methods to tropical scheme theory). Once this is achieved, one could then consider $\Z$-invariant vector bundles on $\G_m$ and rediscover vector bundles on the (scheme-theoretic) tropicalization of the Tate curve. For higher genus curves we expect that a more intricate theory will be necessary, e.g.\ expanding on the matroidal notion of tropical ideals coming from \cite{MaclaganRincon}.

\subsection*{Acknowledgements}
The authors would like to thank Chiara Damiolini, Lorenzo Fantini, Emilio Franco, Johannes Horn, Inder Kaur, Enrica Mazzon, Mirko Mauri, Dhruv Ranganathan, Mattia Talpo, Annette Werner, and Jonathan Wise for several helpful discussions en route to and while writing this article.

A.G.\ and M.U.\ have received funding from the Deutsche Forschungsgemeinschaft  (DFG, German Research Foundation) TRR 326 \emph{Geometry and Arithmetic of Uniformized Structures}, project number 444845124, by the Deutsche Forschungsgemeinschaft (DFG, German Research Foundation) Sachbeihilfe \emph{From Riemann surfaces to tropical curves (and back again)}, project number 456557832, and from the LOEWE grant \emph{Uniformized Structures in Algebra and Geometry}.


\section{Line bundles on metric graphs}

In this section we recall the story of divisors and line bundles on metric graphs that was originally introduced in~\cite{MikhalkinZharkov}. Section~\ref{section_linebundles} and~\ref{section_AbelJacobi} put a particular emphasis on the nature of line bundles as torsors over the sheaf of harmonic functions. This perspective is inspired by the logarithmic Picard group, as popularized in~\cite{MolchoWise}, and sets the stage for what is to come in the later sections.

\subsection{Metric graphs} A \emph{star-shaped domain} of valence $n$ and radius $r$ is a metric space of the form
\begin{equation*}
    S(n,r)=\big\{z=te^{\frac{2k\pi i}{n}}\in\C\ \big\vert\  0\leq t\leq r \textrm{ and } k=1,\ldots, n-1\big\} \ ,
\end{equation*}
where the distance between two points is the length of the shortest path connecting them. 
A \emph{metric graph} is a metric space $\Gamma$ such that every point $p\in\Gamma$ has an open neighborhood that is isometric to a star-shaped domain $S(n,r)$ for some $n\geq 0$ and some $r>0$.

Let $G$ be a finite graph (possibly with loops and multiple edges). We write $V(G)$ and $E(G)$ for the sets of vertices and edges of $G$, respectively. An \emph{orientation} on $G$ determines \emph{source} and \emph{target} maps $s,t:E(G)\to V(G)$. Given a length function $\ell\colon E(G)\rightarrow \R_{>0}$, we may construct a compact metric graph $\Gamma(G,\ell)$ by gluing line segments of length $\ell(e)$ for every edge according to the incidences determined by $G$. Given a compact metric graph $\Gamma$, a pair $(G,\ell)$ such that $\Gamma=\Gamma(G,\ell)$ is called a \emph{model} of $\Gamma$; we say that the model is \emph{simple} if $G$ has no loops or multiedges. Given a simple model $(G,\ell)$ of $\Ga$, we define the \emph{star cover} $\calU(G)=\{U_v\}_{v\in V(G)}$ of $\Ga$, where $U_v$ is the open set consisting of $v$ together with the open intervals joining $v$ to all neighboring vertices. Each $U_v$ is contractible, the pairwise intersections $U_v\cap U_{v'}$ are open intervals or empty, and all triple intersections are empty, making the star cover convenient for cohomological calculations.

The \emph{Euler characteristic} of a compact metric graph $\Gamma$ is given by  $\chi(\Gamma)=h^0(\Gamma)-h^1(\Gamma)$; this quantity is additive in connected components. Given a model $G$ of $\Gamma$, the Euler characteristic is given by $\chi(\Gamma)=\#V(G)-\#E(G)$. If $\Gamma$ is connected, we call the number
\[
g=1-\chi(\Gamma)=h^1(\Ga)=\#E(G)-\#V(G)+1
\]
the \emph{genus} of $\Ga$.

A \emph{free cover} of a metric graph $\Gamma$ is an isometric covering space $f\colon\widetilde{\Gamma}\rightarrow\Gamma$. For a free cover $f\colon\widetilde{\Gamma}\rightarrow\Gamma$ of degree $d$, we naturally have
\begin{equation*}
    \chi(\widetilde{\Gamma})= d\cdot \chi(\Gamma) \ .
\end{equation*}
This is a special case of the tropical Riemann-Hurwitz formula.

\begin{remark}
In many approaches to tropical covers, e.g.\ in~\cite{ABBRI, ABBRII} or~\cite{CavalieriMarkwigRanganathan_tropadmissiblecovers}, the authors work with more general \emph{harmonic or admissible covers} of metric graphs that allow for dilation along vertices and edges. We explicitly refrain from including this extra level of intricacy, since, for different reasons, neither Theorem~\ref{mainthm_NarasimhanSeshadri} nor Theorem~\ref{mainthm_skel=trop} appear to generalize to this situation and it is not needed for Theorem~\ref{mainthm_Tatecurve}. For us the  natural next step is the case of logarithmic curves, expanding on~\cite{MolchoWise}, or, equivalently, of metrized curve complexes in the sense of~\cite{AminiBaker}.
\end{remark}

\subsection{Piecewise linear functions, divisors, and the Picard group} Let $\Ga$ be a metric graph. A \emph{piecewise linear function} on an open subset $U\subseteq\Gamma$ is a continuous function $f\colon U\rightarrow \R$ whose restriction to a line segment is piecewise linear with integer slopes. We denote by $\PL_{\Gamma}(U)$ the abelian group of piecewise linear functions on $U$, and observe that the association $U\mapsto\PL_{\Gamma}(U)$ defines a sheaf $\PL_{\Gamma}$ of abelian groups on $\Gamma$. A global section of $\PL_{\Gamma}$ is called a \emph{rational function} on $\Ga$, and we write $\Rat(\Ga)=\PL_{\Ga}(\Ga)$. 

A \emph{divisor} on an open subset $U\subset \Ga$ is a formal sum $D=\sum_{p\in U}D(p)\cdot p$ over the points $p$ of $U$ such that the set $\{p\in U\vert D(p)\neq 0\}$ is discrete. We denote by $\Div(U)$ the abelian group of divisors on $U$, this defines a sheaf $\Div$ of abelian groups on $\Ga$. When $\Gamma$ is compact, we define the \emph{degree homomorphism} 
\begin{equation}
    \begin{split}
        \deg\colon \Div(\Gamma)&\longrightarrow\Z\\
        D&\longmapsto \sum_{p\in\Gamma}D(p) \ .
    \end{split}
\end{equation}
There is a natural sheaf homomorphism $\div\colon \PL_\Gamma\rightarrow\Div$ given by
\begin{equation}
    \begin{split}
         \PL_{\Ga}(U)&\longrightarrow\Div(U)\\
        f&\longmapsto \sum_{p\in\Gamma}\ord_p(f)\cdot p \ ,
    \end{split}
\label{eq:div}
\end{equation}
for $U\subseteq \Gamma$ open, where $\ord_p(f)$ is the sum of the outgoing slopes of $f$ at $p$ (note that $\Delta(f)=-\div(f)$, the sum of the incoming slopes of $f$, is the \emph{Laplacian} of $f$). The image of $\Rat(\Ga)$ is the subgroup of \emph{principal divisors} $\PDiv(\Ga)\subseteq \Div(\Ga)$. We say that two divisors $D,D'\in\Div(\Gamma)$ on $\Ga$ are \emph{linearly equivalent}, written as $D\sim D'$, if $D-D'$ is a principal divisor, i.e. if $D-D'\in\div(f)$ for a rational function $f\in\Rat(\Gamma)$. The \emph{Picard group} of $\Gamma$ is the quotient
\begin{equation*}
    \Pic(\Gamma)=\Div(\Gamma)/\PDiv(\Gamma) \ . 
\end{equation*} 
If $\Ga$ is compact, then $\deg(\div(f))=0$ for all $f\in\Rat(\Gamma)$ by continuity, and the degree map descends to $\Pic(\Gamma)$. We write $\Pic_d(\Gamma)$ for the set of linear equivalence classes of divisors of degree $d$ on $\Gamma$; it is naturally a torsor over the group $\Pic_0(\Gamma)$.  

Given a free cover $f:\tGa\to \Ga$ of degree $d$ and divisors $\widetilde{D}\in \Div(\tGa)$ and $D\in \Div(\Ga)$, we define the \emph{pushforward} and \emph{pullback} as
\[
f_*(\widetilde{D})=\sum_{\widetilde{p}\in\tGa}\widetilde{D}(\widetilde{p})\cdot f(\widetilde{p})\in \Div(\Ga)\quad \textrm{ and } \quad f^*(D)=\sum_{p\in \Ga} D(p)\cdot\sum_{\widetilde{p}\in f^{-1}(p)}\widetilde{p}\in \Div(\tGa) 
\]
respectively. The maps $f_*$ and $f^*$ respect linear equivalence, and hence descend to maps $f_*:\Pic(\tGa)\to \Pic(\Ga)$ and $f^*:\Pic(\Ga)\to \Pic(\tGa)$. For $f^\ast$ this is elementary and for $f_\ast$ we refer to \cite[Section 1.4]{LenUlirsch}.

\subsection{Harmonic functions and line bundles}\label{section_linebundles} The kernel of the divisor map~\eqref{eq:div} is called the \emph{sheaf of harmonic functions} on $\Ga$ and is denoted $\calH_\Gamma$. Any divisor on an open tree is principal, so $\Div=\PL_{\Ga}/\mc H_{\Ga}$ and we have a short exact sequence
\begin{equation}
0\longrightarrow \calH_{\Ga}\longrightarrow \PL_{\Ga}\xlongrightarrow{\div} \Div \longrightarrow 0 \ 
\label{eq:ses1}
\end{equation}
of sheaves of abelian groups on $\Ga$. The sheaves $\calH_\Gamma$ and $\PL_{\Gamma}$ play the role of respectively the sheaves $\calO_X^\ast$ and $\calM_X^\ast$ of invertible holomorphic and meromorphic functions on a Riemann surface $X$. We now extend this analogy by describing the relationship between divisors and line bundles on $\Ga$.

A \emph{line bundle} $\mc L$ on $\Ga$ is an $\mc H_\Gamma$-torsor, that is to say a sheaf of $\mc H_{\Ga}$-sets such that $\Ga$ can be covered by open subsets $U$ for which $\mc L|_U$ and $\mc H_U=\mc H_{\Ga}|_U$ are isomorphic as sheaves of $\mc H_U$-sets. The set of isomorphism classes of line bundles is the sheaf cohomology group $H^1(\Ga,\mc H_{\Ga})$, and the group structure can be described as follows. Let $\mc L$ and $\mc L'$ be two line bundles on $\Ga$. For an open subset $U\subset \Ga$, consider the diagonal action of $\mc H_\Ga(U)$ on $\mc L(U)\times \mc L'(U)$ given by $f(s,s')= (fs,(-f)s')$, and equip the quotient $\big[\mc L(U)\times \mc L'(U)\big]/{\mc H_{\Ga}(U)}$ with the $\mc H_{\Ga}(U)$-action given by $f\overline{(s,s')}=\overline{(fs,s')}$. The assignment $U\mapsto \big[\mc L(U)\times \mc L'(U)\big]/{\mc H_{\Ga}(U)}$ defines an $\mc H_{\Ga}$-torsor called the \emph{tensor product} $\mc L\otimes \mc L'$. Similarly, given a line bundle $\mc L$, we define $\mc L^{-1}$ as the $\mc H_{\Ga}$-torsor whose underlying sheaf of sets is $\mc L$, but equipped with the opposite action $(f,s)\mapsto(-f)s$. It is clear that the map $f\mapsto \overline{(fs,s)}$ defines an isomorphism $\mc H_{\Ga}$-torsors $\mc H_{\Ga}\simeq \mc L\otimes \mc L^{-1}$. The identity element and group operation on $H^1(\Ga,\mc H_{\Ga})$ correspond to the isomorphism class of $\mc H_{\Ga}$ and the tensor product, respectively.

We can also describe a line bundle $\mc L$ on $\Ga$ in terms of a \v{C}ech cocycle as follows. Choose an oriented simple model $(G,\ell)$ of $\Ga$, and let $\calU(G)=\{U_v\}_{v\in v(G)}$ be the associated star cover. Since each $U_v$ is contractible, we can find isomorphisms $g_v:\mc L|_{U_v}\to \mc H_{U_v}$ of sheaves of $\mc H_{U_v}$-sets. Each edge $e\in E(G)$ with start and end vertices $s(e)$ and $t(e)$ corresponds to a nonempty pairwise intersection $U_{s(e)}\cap U_{t(e)}$, which we also denote by $e$ by abuse of notation, and all other pairwise intersections are empty. Hence the line bundle $\mc L$ determines a cocycle $\{g_e\}_{e\in E(\Ga)}$, where $g^e$ is the harmonic function on $e$ (that is to say, a linear function with integer slope) defined by the composition $g^e=\big[g_{t(e)}\circ g_{s(e)}^{-1}\big]\big\vert_{e}:\mc H_e\to \mc H_e$. Two cocycles $\{g_e\}$ and $\{g'^e\}$ determine the same line bundle if and only if $g^e-g'^e=f_{t(e)}-f_{s(e)}$ for some harmonic functions $f_v\in \mc H_{\Ga}(U_v)$, and the cocycle condition is trivially satisfied because all triple intersections are empty. We note that changing the orientation of an edge $e$ replaces $g^e$ with $-g^e$.

Given a line bundle $\mc L$ defined by a \v{C}ech cocycle $\{g^e\}_{e\in E(\Ga)}$ with respect to an oriented simple model $(G,\ell)$, the \emph{degree} of $\mc L$ is defined as
\[
\deg \mc L=\sum_{e\in E(G)}\dot{g}^e,
\]
where $\dot{g}^e$ denotes the slope of $g^e$ in the direction of the oriented edge $e$. Is is easy to verify that $\deg \mc L$ does not depend on the choice of cocycle $\{g^e\}$ or model $G$, and that $\deg \mc L_1\otimes \mc L_2=\deg \mc L_1+\deg \mc L_2$ and $\deg \mc L^{-1}=-\deg \mc L$.

\begin{proposition}\label{prop_linebundle=divisorclass}
Let $\Gamma$ be a metric graph. There is a natural isomorphism
\begin{equation*}
    \Pic(\Gamma)\xlongrightarrow{\sim}H^1(\Gamma,\calH_\Gamma) \ .
\end{equation*}
\end{proposition}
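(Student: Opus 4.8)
The plan is to obtain the isomorphism as the connecting homomorphism of the long exact cohomology sequence of the short exact sequence of sheaves \eqref{eq:ses1}, namely $0\to\calH_\Gamma\to\PL_\Gamma\xrightarrow{\div}\Div\to 0$. Since $H^0(\Gamma,\PL_\Gamma)=\Rat(\Gamma)$ and $H^0(\Gamma,\Div)=\Div(\Gamma)$, with the induced map being $\div$ and hence of image $\PDiv(\Gamma)$, the relevant piece of the long exact sequence reads
\[
\Rat(\Gamma)\xrightarrow{\div}\Div(\Gamma)\xrightarrow{\delta}H^1(\Gamma,\calH_\Gamma)\longrightarrow H^1(\Gamma,\PL_\Gamma).
\]
Thus $\delta$ has kernel $\PDiv(\Gamma)$ and factors through an injective homomorphism $\Pic(\Gamma)=\Div(\Gamma)/\PDiv(\Gamma)\hookrightarrow H^1(\Gamma,\calH_\Gamma)$; this map is canonical, compatible with restriction to open subsets, and a homomorphism for the relevant group laws (the one on $\Pic(\Gamma)$ coming from addition of divisors, the one on $H^1$ from the abelian sheaf structure of $\calH_\Gamma$). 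So everything comes down to showing that $\delta$ is surjective, or equivalently, by exactness, to the vanishing $H^1(\Gamma,\PL_\Gamma)=0$.

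I expect this vanishing to be the crux of the proof. The approach I would take is to show that $\PL_\Gamma$ is a \emph{soft} sheaf: any section of $\PL_\Gamma$ over a closed subset $Z\subseteq\Gamma$ — i.e.\ a piecewise linear function defined on some open neighbourhood of $Z$ — extends to a global piecewise linear function on $\Gamma$. Since $\Gamma$ is a metric space, hence paracompact, softness implies $H^i(\Gamma,\PL_\Gamma)=0$ for all $i\geq 1$. To verify the extension property one argues edge by edge on a sufficiently fine model: on each edge one is given a piecewise linear function on a neighbourhood of $Z$ and must fill in the (discretely many) complementary "gaps", which are closed subintervals with prescribed values at their endpoints; such a subinterval can always be filled by a piecewise linear function with those boundary values, since a pair of equal and opposite integer slopes realises any net change over any positive length. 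One then propagates the resulting boundary values across the remaining vertices so as to keep the function continuous. The one genuine subtlety is to make these local extensions consistent at vertices of valence $\ge 3$, and this is where the (still elementary) work concentrates; note that $\PL_\Gamma$ is \emph{not} flasque — e.g.\ the function $x$ on an open arc of a circle does not extend — so it is essential to work with closed subsets here.

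There is an alternative route that stays closer to the \v{C}ech picture set up above and avoids softness. Fix a simple model $(G,\ell)$ with star cover $\calU(G)$; since all triple intersections are empty, $\check H^{\ge 2}(\calU(G),-)$ vanishes for every sheaf, and one shows directly that $\check H^1(\calU(G),\PL_\Gamma)=0$ by solving $f_{t(e)}-f_{s(e)}=h_e$ for an arbitrary prescribed $1$-cochain $(h_e)$: fix $f_v$ at a root vertex, propagate along a spanning tree of $G$, and use the flexibility of piecewise linear interpolation to correct along the remaining non-tree edges. Combined with the (separately checked) acyclicity of $\calH_\Gamma$ and $\PL_\Gamma$ on the contractible pieces $U_v$ and on the open edges, Leray's theorem then gives $H^1(\Gamma,\calH_\Gamma)\cong\check H^1(\calU(G),\calH_\Gamma)$, and the isomorphism with $\Pic(\Gamma)$ becomes the explicit correspondence already implicit in the cocycle description of line bundles recalled above: a divisor $D$ is principal on each $U_v$, say $D|_{U_v}=\div(f_v)$, and $D\mapsto\{f_{t(e)}-f_{s(e)}\}_{e\in E(G)}$ is the desired map, with inverse recovering $D$ as $\div(f_v)$ on each $U_v$ from any solution $(f_v)$ of the cocycle equations.
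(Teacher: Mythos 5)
Your proposal is correct and follows the same overall skeleton as the paper: both obtain the map as the connecting homomorphism $\delta$ of the long exact sequence attached to \eqref{eq:ses1}, with kernel $\PDiv(\Gamma)$ by exactness at $\Div(\Gamma)$. The difference is in how surjectivity of $\delta$ --- the crux --- is handled. The paper simply cites \cite[Proposition 4.7]{MikhalkinZharkov} for this, after first constructing the torsor $\calH_\Gamma(D)$ explicitly and identifying it with $\delta(D)$ (that explicit description is what gets reused later, e.g.\ for multidivisors, so it is not mere decoration). You instead prove the stronger statement $H^1(\Gamma,\PL_\Gamma)=0$, either via softness of $\PL_\Gamma$ on the paracompact space $\Gamma$ or by directly splitting an arbitrary \v{C}ech $1$-cochain $(h_e)$ over the star cover. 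Both of your routes work: the interpolation step (realising any net change over a positive length with integer slopes) is sound, and in the \v{C}ech version you do not even need a spanning tree --- one can normalise all $f_v$ to vanish at the vertices, set $f_{t(e)}=h_e$ on the half of $e$ near $s(e)$ and interpolate on the other half, and then define $f_{s(e)}=f_{t(e)}-h_e$ there; the only point requiring care is that $h_e$ may have break points accumulating at the open ends of $e$, which is exactly absorbed by the fact that $t(e)\notin U_{s(e)}$ and $s(e)\notin U_{t(e)}$. What your approach buys is self-containedness (no external citation for the key surjectivity) and a slightly stronger conclusion ($H^1(\Gamma,\PL_\Gamma)=0$ rather than just the vanishing of the map $H^1(\Gamma,\calH_\Gamma)\to H^1(\Gamma,\PL_\Gamma)$); what it costs is the vertex bookkeeping you flag, plus --- in the \v{C}ech variant --- a justification that the star cover computes sheaf cohomology, a point the paper also glosses over elsewhere. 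No gap, just a different resolution of the one nontrivial step.
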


\begin{remark}
In~\cite{MikhalkinZharkov} the term \emph{line bundle} is used for a locally trivial fibration with fiber the tropical affine line over $\Gamma$ in the category integral affine manifolds. In this language, Proposition~\ref{prop_linebundle=divisorclass} has appeared as~\cite[Proposition 4.6]{MikhalkinZharkov}.
\end{remark}

\begin{proof}[Proof of Proposition~\ref{prop_linebundle=divisorclass}]
Given a divisor $D\in\Div(\Gamma)$, we define the associated line bundle $\calH_\Gamma(D)$ by
\begin{equation*}
U\longmapsto \calH_\Gamma(D)(U)=\big\{s\in\PL_\Ga(U)\ \big\vert\ \div(s)=D\vert_U\big\} 
\end{equation*}
for $U\subseteq \Gamma$ open. For $s\in\calH_{\Ga}(D)(U)$ and $f\in\calH_\Gamma(U)$ we have 
\begin{equation*}
    \div(s+f)=\div(s)+\div(f)=\div(s) = D\vert_U \ ,
\end{equation*}
since $f$ is harmonic, so $\mc H_{\Ga}(D)$ is a sheaf of $\mc H_{\Ga}$-sets. 

Let $G$ be an oriented simple model of $\Ga$, and let $U_v\in \calU(G)$ be an element of the star cover of $\Ga$. Since $U_v$ is a tree, we can find a function $f_v\in \PL_{\Ga}(U_v)$ such that $\div f_v=D\vert_{U_v}$. Therefore we have an isomorphism $\calH_\Gamma(D)\vert_{U_v}\xrightarrow{\sim}\calH_{U_v}$ of $\calH_{U_v}$-torsors given by 
\begin{equation*}
    s\longmapsto s-f_v\vert_V
\end{equation*}
for $s\in\calH_\Gamma(D)(V)$ and $V\subseteq U_v$ open. Therefore $\calH_\Gamma(D)$ is an $\calH_\Gamma$-torsor on $\Gamma$. It is easy to verify that $\mc H_{\Ga}(D_1)\otimes \mc H_{\Ga}(D_1)\simeq \mc H_{\Ga}(D_1+D_2)$ and $\mc H_{\Ga}(D)^{-1}\simeq \mc H_{\Ga}(-D)$, so the map $\Div(\Gamma)\rightarrow H^1(\Gamma,\calH_\Gamma)$ given by $D\mapsto \calH_\Gamma(D)$ is a homomorphism, and furthermore $\deg \mc H_{\Ga}(D)=\deg D$.

Let $D_1,D_2\in\Div(\Gamma)$ be two linearly equivalent divisors on $\Gamma$, so that there is a rational function $f\in\Rat(\Gamma)$ such that $D_1=D_2+\div(f)$. The bijection 
\begin{equation*}\begin{split}
  \calH_\Gamma(D_1)(U)&\xlongrightarrow{\sim} \calH_\Gamma(D_2)(U)\\
  s&\longmapsto s-f\vert_U
\end{split}\end{equation*}
for $U\subseteq\Gamma$ open is $\calH_\Gamma(U)$-equivariant and compatible with restriction to open subsets $V\subseteq U$; thus it defines an isomorphism $\calH_\Gamma(D_1)\xrightarrow{\sim}\calH_\Gamma(D_2)$ of $\calH_\Gamma$-torsors. Therefore the map $\Div(\Gamma)\rightarrow H^1(\Gamma,\calH_\Gamma)$ descends to a homomorphism $\Pic(\Gamma)\rightarrow H^1(\Gamma,\calH_\Gamma)$

In order to see that this map is an isomorphism, we consider the long exact sequence associated to the short exact sequence~\eqref{eq:ses1}
\[
0 \longrightarrow \calH_{\Ga}(\Ga) \longrightarrow \Rat(\Ga)\xlongrightarrow{\div} \Div(\Gamma) \xlongrightarrow{\delta} H^1(\Gamma,\calH_{\Ga}) \ .
\]
By definition, $\Pic(\Ga)$ is the cokernel of $\div$, and furthermore the boundary homomorphism $\delta$ is precisely the map $D\mapsto \calH(D)$ described above. By~\cite[Proposition 4.7]{MikhalkinZharkov}, the morphism $\delta$ is surjective, which implies that there is an induced isomorphism $\Pic(\Gamma)\to H^1(\Gamma, \calH_{\Ga})$. \end{proof}

\subsection{The Abel-Jacobi theorem}\label{section_AbelJacobi} 
Let $\Gamma$ be a metric graph. We define the sheaf $\Omega_{\Gamma}$ of \emph{harmonic $1$-forms} on $\Gamma$ by the short exact sequence
\begin{equation}\label{eq_ses1forms}
    0\longrightarrow \R\longrightarrow \calH_\Gamma\longrightarrow \Omega_\Gamma\longrightarrow 0 
\end{equation}
of sheaves of abelian groups. Choose an oriented simple model $(G,\ell)$ of $\Gamma$, and let $s,t:E(G)\to V(G)$ be the associated source and target maps. A harmonic $1$-form $\alpha\in \Omega_{\Gamma}(\Gamma)$ may be written as a formal sum
\begin{equation*}
    \alpha=\sum_{e\in E(G)}a_e de
\end{equation*}
with $a_e\in\Z$, such that at every vertex $v\in V(G)$ we have
$$
\sum_{e \textrm{ with } s(e)=v}a_e=\sum_{e \textrm{ with } t(e)=v}a_e.
$$
Hence, for a compact metric graph $\Ga$, we have a natural identification between $H^0(\Gamma,\Omega_\Gamma)$ and the simplicial homology group $H_1(G,\ZZ)$, which, in turn, is naturally isomorphic to $H_1(\Gamma,\ZZ)$. 

\begin{lemma}\label{lemma_H1diffforms} Let $\Gamma$ be a connected metric graph. Then there is a natural degree isomorphism \[
\deg:H^1(\Gamma,\Omega_{\Gamma}) \xlongrightarrow{\sim}\ZZ.
\]
\end{lemma}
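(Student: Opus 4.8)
The strategy is to extract the statement from the long exact cohomology sequence associated to the short exact sequence~\eqref{eq_ses1forms}, using what we already know about $H^1(\Gamma,\calH_\Gamma)$ and $H^1(\Gamma,\R)$. First I would write down the relevant piece of the long exact sequence
\begin{equation*}
  H^1(\Gamma,\R)\longrightarrow H^1(\Gamma,\calH_\Gamma)\longrightarrow H^1(\Gamma,\Omega_\Gamma)\longrightarrow H^2(\Gamma,\R).
\end{equation*}
Since $\Gamma$ is homotopy equivalent to a finite graph, it has cohomological dimension one, so $H^2(\Gamma,\R)=0$ and the map $H^1(\Gamma,\calH_\Gamma)\to H^1(\Gamma,\Omega_\Gamma)$ is surjective. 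By Proposition~\ref{prop_linebundle=divisorclass} we have $H^1(\Gamma,\calH_\Gamma)\cong\Pic(\Gamma)$, and for $\Gamma$ compact and connected the degree map $\deg\colon\Pic(\Gamma)\to\Z$ is surjective with kernel $\Pic_0(\Gamma)$. So the plan is to show that the composite $\Pic(\Gamma)\cong H^1(\Gamma,\calH_\Gamma)\twoheadrightarrow H^1(\Gamma,\Omega_\Gamma)$ factors through $\deg$ and induces the desired isomorphism $H^1(\Gamma,\Omega_\Gamma)\xrightarrow{\sim}\Z$; equivalently, that the image of $H^1(\Gamma,\R)$ in $H^1(\Gamma,\calH_\Gamma)$ is exactly $\Pic_0(\Gamma)$.

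To make this precise I would argue with the \v Cech description relative to the star cover $\calU(G)$ of an oriented simple model $(G,\ell)$. A class in $H^1(\Gamma,\calH_\Gamma)$ is represented by a cocycle $\{g^e\}_{e\in E(G)}$ of harmonic (i.e.\ integer-slope linear) functions on the edges, and its degree is $\sum_{e}\dot g^e$, the sum of the slopes. A class in $H^1(\Gamma,\R)$ is represented by a cocycle of constants $\{c_e\}$, whose image in $H^1(\Gamma,\calH_\Gamma)$ is the same cocycle viewed as (slope-zero) harmonic functions; such a cocycle has degree $0$, so the image of $H^1(\Gamma,\R)$ lands in $\Pic_0(\Gamma)$. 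Conversely, the standard identification $H^1(\Gamma,\R)\cong H^1(G,\R)\cong\R^g$ together with the fact (used in the excerpt) that $H^1(\Gamma,\R)$ surjects onto $\Pic_0(\Gamma)$ — indeed, $\Pic_0(\Gamma)=\R^g/H_1(G,\Z)$ is a quotient of $H^1(\Gamma,\R)$ — shows the image is all of $\Pic_0(\Gamma)$. Hence $H^1(\Gamma,\Omega_\Gamma)\cong\Pic(\Gamma)/\Pic_0(\Gamma)\cong\Z$ via $\deg$, and this is manifestly natural in $\Gamma$.

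Alternatively, and perhaps more cleanly, one can give a direct \v Cech computation: with respect to the star cover, $H^1(\Gamma,\Omega_\Gamma)$ is computed by $1$-cochains $\{\alpha_e\}$ of integer-slope linear functions modulo those coming from $H^1(\Gamma,\calH_\Gamma)$-coboundaries and from global harmonic $1$-forms on the $U_v$'s — but since a harmonic $1$-form on a tree $U_v$ is exact, the only invariant of $\{g^e\}$ modulo coboundaries $\{f_{t(e)}-f_{s(e)}\}$ and modulo constants is $\sum_e\dot g^e=\deg$. The map $\deg$ is well-defined on $H^1(\Gamma,\Omega_\Gamma)$ because changing the cocycle by a coboundary or by constants does not change the total slope, and it is surjective because the cocycle supported on a single edge with slope $1$ maps to $1$. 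Injectivity is exactly the statement that a total-slope-zero cocycle is, modulo constants, a coboundary, which is where the tree-exactness of harmonic $1$-forms and connectedness of $\Gamma$ enter. The main obstacle is getting this injectivity argument airtight — i.e.\ carefully checking that the quotient by constants together with coboundaries kills everything of degree zero — and verifying that all the identifications are compatible with change of model and hence natural; both are routine but require some care with orientations and signs.
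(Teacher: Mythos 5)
Your second, ``direct \v{C}ech'' route is essentially the paper's approach: same star cover, same definition of the degree of a $1$-cochain, same observations that coboundaries have degree zero and that a cocycle supported on a single edge gives surjectivity. But the one step that carries the entire content of the lemma --- injectivity, i.e.\ that a total-slope-zero cocycle is a coboundary --- is exactly the step you defer (``the main obstacle is getting this injectivity argument airtight''), so as written the proposal has a gap at the heart of the statement. The paper closes this gap not by exhibiting a primitive for a degree-zero cocycle but by a rank count: since all triple intersections of the star cover are empty, there is a four-term exact sequence
\begin{equation*}
0\longrightarrow \Omega_\Gamma(\Gamma)\longrightarrow \prod_{v\in V(G)}\Omega_\Gamma(U_v)\longrightarrow \prod_{e\in E(G)}\Omega_\Gamma(e)\longrightarrow \check{H}^1(\Gamma,\Omega_\Gamma)\longrightarrow 0
\end{equation*}
of finitely generated abelian groups, and computing ranks (namely $m-n+1$, $2m-n$ and $m$ for $n=\#V(G)$, $m=\#E(G)$, using $\Omega_\Gamma(\Gamma)\simeq H_1(\Gamma,\Z)$ and $\rk\Omega_\Gamma(U_v)=\val(v)-1$) gives $\rk\check{H}^1(\Gamma,\Omega_\Gamma)=1$, which together with the surjectivity of $\deg$ yields the claim. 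To complete your version you would either have to reproduce this count or supply the combinatorial argument directly: given integers $\omega^e$ on the edges of a connected graph with $\sum_e\omega^e=0$, produce balanced integer tuples $\alpha_v\in\Omega_\Gamma(U_v)$ with $\omega^e=\alpha_{t(e)}\vert_e-\alpha_{s(e)}\vert_e$ (a spanning-tree induction does it). Neither is present in the proposal.

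Your first route has a more structural problem: circularity relative to the paper's logical order. You invoke ``the fact that $H^1(\Gamma,\R)$ surjects onto $\Pic_0(\Gamma)$,'' i.e.\ $\Pic_0(\Gamma)\simeq H^1(\Gamma,\R)/\Omega_\Gamma(\Gamma)$. In the paper this is the Abel--Jacobi statement, which is extracted from the long exact sequence \eqref{eq:les} only \emph{after} the last arrow has been identified with the degree homomorphism using Lemma~\ref{lemma_H1diffforms} itself. Proving directly that every degree-zero class in $H^1(\Gamma,\calH_\Gamma)$ is represented by a constant cocycle is equivalent to the injectivity you are trying to establish, so this route does not avoid the work unless you import the Abel--Jacobi theorem from an external source such as~\cite{MikhalkinZharkov}, which would rearrange the architecture of the section. (The ingredients $H^2(\Gamma,\R)=0$ and $H^1(\Gamma,\calH_\Gamma)\simeq\Pic(\Gamma)$ from Proposition~\ref{prop_linebundle=divisorclass}, and the observation that constant cocycles have degree zero, are all fine.)
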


\begin{proof} Choose an oriented simple model $(G,\ell)$ of $\Gamma$ with $n=\#V(G)$ vertices and $m=\#E(G)$ edges. As for line bundles, we compute $H^1(\Ga,\Omega_{\Ga})$ as the first \v Cech cohomology group with respect to the star cover $\calU(G)=\{U_v\}_{v\in v(G)}$. Since all triple intersections are empty, we have an exact sequence
\begin{equation*}
    0\longrightarrow \Omega_\Gamma(\Gamma)\longrightarrow \prod_{v\in V(G)}\Omega_\Gamma(U_v)\longrightarrow \prod_{e\in E(G)} \Omega_\Gamma(e) \longrightarrow \check{H}^1(\Gamma,\Omega_\Gamma) \longrightarrow 0 \ ,
\end{equation*}
where all terms are free abelian groups of finite rank. Define the \emph{degree} of a $1$-cochain $\gamma=\{c^e\}_{e\in E(G)}$ by
\[
\deg \gamma=\sum_{e\in E(G)} c^e.
\]
The degree of a $1$-coboundary is zero, and we claim that the induced map $\deg:\check{H}^1(\Gamma,\Omega_\Gamma)\to \Z$ is an isomorphism. It is clearly surjective, and to complete the proof we compute the ranks in the exact sequence. Since $\Omega_\Gamma(\Gamma)\simeq H_1(\Gamma, \Z)$ we have
\begin{equation*}
    \rk \Omega_\Gamma(\Gamma) = g(\Ga) = m-n+1 \ .
\end{equation*}
Let $v\in V(G)$, then an element $\Omega_{\Gamma}(U_v)$ is determined by $\val (v)$ integers subject to a single relation, so $\rk \Omega_{\Gamma}(U_v)=\val(v)-1$ and therefore
\begin{equation*}
        \rk \prod_{v\in V(G)}\Omega_\Gamma(U_v) =
        \sum_{v\in V(G)} (\val(v)-1)=
        2m-n \ .
\end{equation*}
Finally, a harmonic $1$-form on an edge is simply an integer, so $\Omega_{\Gamma}(e)\simeq \ZZ$ and hence
\begin{equation*}
        \rk \prod_{e\in E(G)} \Omega_\Gamma(e)=m \ .
\end{equation*}
Putting all this together, we see that
\begin{equation*}
    \rk \check{H}^1(\Gamma,\Omega_\Gamma)= (m-n+1)-(2m-n)+m=1 \ ,
\end{equation*}
which completes the proof.
\end{proof}

\begin{remark} This result appears to be well-known. In the framework of tropical homology (as introduced by~\cite{IKMZ}), Poincar\'e duality establishes a natural isomorphism $H^1(\Ga,\Omega_{\Ga})\simeq H_{0,0}(\Ga,\ZZ)$, which is isomorphic to $\ZZ$ via the degree map (see~\cite{Lefschetz}). Alternatively one could also deduce this from~\cite[Section 3.4]{MolchoWise}.

\end{remark}

Let us continue to assume that $\Gamma$ is compact and connected. The short exact sequence \eqref{eq_ses1forms} provides us with a long exact sequence
\begin{equation}
    0\longrightarrow \R\longrightarrow \calH_\Gamma(\Gamma)\longrightarrow \Omega_\Gamma(\Gamma)\longrightarrow H^1(\Gamma,\R)\longrightarrow \Pic(\Gamma)\xlongrightarrow{\deg} \Z\rightarrow 0 \ ,
\label{eq:les}
\end{equation}
where the second arrow is an isomorphism and we identify the last arrow with the degree homomorphism using Proposition~\ref{prop_linebundle=divisorclass} and Lemma~\ref{lemma_H1diffforms}. Hence the middle arrow establishes an isomorphism between $\Pic_0(\Ga)$ and the real $g$-dimensional torus $H^1(\Ga,\RR)/\Omega_{\Ga}(\Ga)$ known as the \emph{Jacobian} $\Jac(\Ga)$. We note that according to some authors, $H^1(\Ga,\RR)/\Omega_{\Ga}(\Ga)$ is the \emph{Albanese variety} $\Alb(\Ga)$, while $\Jac(\Ga)$ is the dual torus $\Omega_{\Ga}(\Ga)^*/H_1(\Ga,\ZZ)$. 

This isomorphism also admits a description in terms of an Abel--Jacobi map $\Gamma\rightarrow\Jac(\Gamma)$ inducing a homomorphism $\Div_0(\Gamma)\rightarrow\Jac(\Gamma)$ and subsequently an isomorphism $\Pic_0(\Gamma)=\Div_0(\Gamma)/\PDiv(\Gamma)\xrightarrow{\sim}\Jac(\Gamma)$; for more details we refer the interested reader to~\cite{MikhalkinZharkov} and~\cite{BakerFaber}.


\section{Principal bundles and vector bundles on metric graphs}

\subsection{Root data of reductive groups}
It is well known that the category of split algebraic tori is naturally equivalent to the category of finitely generated free abelian groups in two (dual) ways: to go from a split algebraic torus to a finitely generated free abelian group we either take the character lattice or the cocharacter lattice. 

Split connected reductive algebraic groups over a fixed field are classified in terms of root data (see~\cite[Ch.\ 7 and 8]{Springer_linalggroups} for details). 
A \emph{root datum} is a quadruple $\Phi=(M,R,M^\vee,R^\vee)$ consisting of 
\begin{itemize}
    \item a finitely generated free abelian group $M$ and its dual group $M^\vee=\Hom(M,\Z)$ (with duality pairing $\langle.,.\rangle$) and
    \item a subset of \emph{roots} $R$ of $M$ and of \emph{coroots} $R^\vee$ of $M^\vee$ together with a bijection $(.)^\vee\colon R\rightarrow R^\vee$
\end{itemize}
subject to the following two axioms:
\begin{enumerate}[(i)]
    \item For all $\alpha\in R$ we have $\langle \alpha,\alpha^\vee\rangle=2$;
    \item The reflection homomorphisms $s_\alpha\colon M\rightarrow M$ and $s_\alpha^\vee\colon M^\vee\rightarrow M^\vee$ given by 
    \begin{equation*}
        u\longmapsto u-\langle u,\alpha^\vee\rangle \alpha \qquad \textrm{and} \qquad v\longmapsto v-\langle \alpha,v\rangle \alpha
    \end{equation*}
    fulfill
    \begin{equation*}
        s_\alpha(R)=R \qquad \textrm{and} \qquad s_\alpha^\vee(R^\vee)=R^\vee
    \end{equation*}
    for all $\alpha\in R$.
\end{enumerate}
Unlike the case of tori, this classification is not a categorical equivalence. When $G$ is semisimple the associated root datum is nothing but a root system and this recovers the well-known classification of semisimple Lie algebras. 

The \emph{Weyl group} $W=W_\Phi$ of a root datum $\Phi$ is the (finite) automorphism group of $M$ generated by all reflections $s_\alpha$ for $\alpha\in R$. 

Our main example is the group $\GL_n$. In this case $M=\Z^n$ is the character lattice of its diagonal torus $\G_m^n$, its dual lattice is the cocharacter lattice of $\G_m^n$ and the roots (and coroots) are given by 
\begin{equation*}
    \big\{e_i-e_j\ \big\vert\  1\leq i,j\leq n \quad \textrm{and}\quad  i\neq j\big\}
\end{equation*}
where $e_1,\ldots,e_n$ denote the standard basis vectors of $\Z^n$. The Weyl group is isomorphic to the symmetric group $S_n$ operating via permutation matrices. The character lattice of the group $
\SL_n$ is the sublattice $M_0\subset \ZZ^n$ consisting of vectors whose coordinates sum to zero, while the Weyl group $W=S_n$ is the same. 

\subsection{Principal bundles on metric graphs.} Let $\Ga$ be a metric graph, let $\calH_{\Gamma}$ be the sheaf of harmonic functions on $\Gamma$, and let $\Phi=(M,R,N,R^\vee)$ be a root datum with Weyl group $W$. For an open subset $U\subset \Ga$, the $W$-action on $M$ extends to a $W$-action on $\calH_\Gamma(U)\otimes_\Z M$, where $W$ acts trivially on the first summand. Taking the semidirect product, we obtain a sheaf $W\ltimes \calH_\Gamma\otimes_\Z M$ of (generally speaking) non-abelian groups on $\Ga$.

\begin{definition}
Let $\Gamma$ be a metric graph and let  $\Phi=(M,R,M^\vee,R^\vee)$ be a root datum with Weyl group $W$. A \emph{principal $\Phi$-bundle}  on $\Gamma$ is a $W\ltimes \calH_\Gamma\otimes_\Z M$-torsor on $\Gamma$.
\end{definition}

When $G$ is a connected split reductive group and $\Phi$ is the associated root datum, we also sometimes refer to a principal $\Phi$-bundle as a \emph{principal $G$-bundle} on $\Gamma$. In the case $G=\GL_1=\G_m$, a principal $\G_m$-bundle is nothing but a $\calH_\Gamma$-torsor on $\Gamma$. So, in this case, we recover the theory of line bundles on metric graphs outlined in Section~\ref{section_linebundles} above. 

\begin{remark}
Let $G$ be a connected split reductive group over a field $k$. Our definition of a tropical analogue of principal $G$-bundle on a metric graph $\Gamma$ is completely independent of $k$. So one may think of this as being defined over the ''field $\FF_1$ with one element``. This goes back to a suggestion by Tits~\cite{Tits_F1}. For the  state of the art concerning analogues of reductive groups over $\FF_1$ we refer the reader to~\cite{Lorscheid_blueprintsII}. 
\end{remark}

The set of isomorphism classes of principal $\Phi$-bundles over a metric graph $\Ga$ is given by the non-abelian \v{C}ech cohomology $\check{H}^1(\Gamma,W\ltimes\calH_\Gamma\otimes M)$. This set is convenient to describe in terms of an oriented simple model $(G,\ell)$ of $\Ga$. Indeed, let $E$ be a principal $G$-bundle on $\Ga$, then we can find trivializations $g_v:E|_{U_v}\to W\ltimes \calH_\Gamma(U_v)\otimes_\Z M$, and $E$ is described in terms of the transition functions
\[
g^e=\big[g_{t(e)}\circ g^{-1}_{s(e)}\big]:W\ltimes \calH_\Gamma(e)\otimes_\Z M\longrightarrow W\ltimes\calH_\Gamma(e)\otimes_\Z M.
\]
Choosing a basis $M\simeq \ZZ^n$, we can write the transition functions as $g^e=(w^e,g_1^e,\ldots,g_n^e)$, where $w^e\in W$ and the  $g_i^e\in \mc H_{\Ga}(e)$ are linear functions with integer slope. The cocycle condition is trivially verified since triple intersections are empty.

\subsection{Vector bundles on metric graphs.} \label{sec:GLtrop} From now on we will focus on principal $\GL_n$-bundles on a metric graph $\Gamma$ or, in other words, $S_n\ltimes\calH_\Gamma^n$-torsors on $\Gamma$. To avoid clunky terminology, we will refer to such an object simply as a \emph{vector bundle} of rank $n$ on $\Gamma$.

It is convenient to represent transition functions of vector bundles on $\Ga$ in terms of the tropical semifield $\mathbb T=\R\cup\{\infty\}$ as follows. By~\cite[Lemma 1.4]{TropChern}, the invertible $n\times n$ matrices over $\mathbb T$ are precisely the compositions of invertible diagonal matrices (having finite entries on the diagonal and $\infty$ elsewhere) and permutation matrices. In other words, $\GL_n(\mathbb T)=S_n\ltimes \R^n$ is the group of matrices having a unique finite entry in each row and each column, while $\SL_n(\mathbb T)\subset\GL_n(\mathbb T)$ is the subgroup of matrices whose \emph{tropical determinant}, defined as the sum of the finite entries, is zero. 

Given a vector bundle $E$ of rank $n$ on a metric graph $\Ga$, we can represent its transition functions as $\GL_n(\mathbb T)$-valued harmonic functions as follows. Choose an oriented simple model, and let $g^e=(\sigma^e,g^e_1,\ldots,g^e_n)$ for $e\in E(G)$ be the transition functions of $E$, where $\sigma^e\in S_n$ and $g^e_i\in \mc H_{\Gamma}(e)$. We can view the $g^e$ as $\GL_n(\TT)$-matrices in the following way:
\[
g^e_{ij}=\left\{\begin{array}{cc} g^e_i,& \textrm{if } j=\sigma^e(i),\\
\infty, & \textrm{if } j\neq \sigma^e(i).\end{array}
\right.
\]
Note also that our notion of a tropical vector bundle coincides with the one proposed in~\cite{TropChern}. 

We note that we may construct the \emph{total space} of a vector bundle $E$ on a metric graph $\Ga$ as an integral affine manifold by gluing trivializations $U_v\times \TT^n$ along the tropical linear maps $g^e_{ij}$. A principal $\PGL_n(\TT)$-bundle, where $\PGL_n(\TT)=\GL_n(\TT)/\TT$, then corresponds to a fibration by tropical projective spaces $\TT\PP^{n-1}=\big(\TT^n\backslash\{\infty,\ldots,\infty\}\big)/\TT$. We leave the details of this construction to the avid reader.

\subsection{Basic operations}\label{subseq:basic} We now describe a number of standard constructions for vector bundles in our tropical situation. 
Let $E$ and $F$ be two vector bundles of rank $m$ and $n$ on a metric graph $\Gamma$. Choose an oriented simple model $(G,\ell)$ of $\Ga$, and let $\{g^e_{ij}\}_{e\in E(G)}$ and $\{h^e_{kl}\}_{e\in E(G)}$ for $e\in E(G)$ be the transition functions of $E$ and $F$, respectively.

\begin{enumerate}[(i)]
    \item The \emph{direct sum} $E\oplus F$ is the vector bundle of rank $m+n$ whose transition functions on an edge $e\in E(G)$ are given by the block diagonal matrices
    \begin{equation*}
        \begin{bmatrix}
            g^e_{ij} & \infty\\
            \infty & h^e_{kl}
        \end{bmatrix} \ .
    \end{equation*}
    \item The \emph{tensor product} $E\otimes F$ is the vector bundle of rank $mn$ whose transition functions on an edge $e\in E(G)$ are given by the Kronecker product of the transition functions of $E$ and $F$, in other words by $mn\times mn$ matrices whose $((i,k),(j,l))$-th entry is
    \begin{equation*}
            g^e_{ij} + h^e_{kl} 
        \ .
    \end{equation*}

    \item The \emph{dual vector bundle} $E^\ast$ is the vector bundle of rank $n$ whose transition functions on an edge $e\in E(G)$ are
    \begin{equation*}
   \widetilde{g}^e_{ij}=\begin{cases} -g^e_{ij}, & \textrm{if } g^e_{ij}\neq \infty \ ,\\
    \infty, &  \textrm{if } g^e_{ij}= \infty \ .
    \end{cases}
    \end{equation*} 
    \item The \emph{determinant} $\det(E)$ is the line bundle on $\Gamma$ whose transition functions are given by the tropical determinants of $g^e_{ij}$, in other words the sums of all finite entries.
    \item The \emph{degree} of $E$ is the sum of the slopes of the finite entries of the $g^e_{ij}$ over all edges $e\in E(G)$.
\end{enumerate}

We leave it to the avid reader to verify that these constructions do not depend on the choice of model $G$ and transition functions. We furthermore note that a rank-$n$ vector bundle $E$ admits the structure of an $\SL_n$-bundle if and only if $\det (E)$ is the trivial line bundle. Finally, it is clear that the equations
\[\begin{split}
\deg E\oplus F&=\deg E+\deg F\\ \deg E\otimes F&=\deg E\cdot \deg F\\ \deg E^\ast&=-\deg E\\ \deg \det E&=\deg E
\end{split}\]
hold for all vector bundles $E$ and $F$.

\begin{remark}
There are several different options on how one could define the notion of a \emph{homomorphism} between two vector bundles $E$ and $F$: as a section of $E^\ast\otimes F$, in terms of its total space, or by mimicking morphisms of principal bundles. In fact, there does not even seem to be a generally agreed-upon definition of a tropical linear map (which would immediately provide us with a notion of a homomorphism of trivial bundles). This is why we refrain from opening Pandora's box and prefer not to propose a definite definition at this point. 
\end{remark}

We point out, however, that two vector bundles are isomorphic (as torsors) if and only if their transition functions define the same class in $H^1(\Gamma,S_n\ltimes \calH_\Gamma^n)$. Thus, it is straightforward to define subbundles:

\begin{definition} Let $E$ and $F$ be vector bundles of rank $n$ and $m\leq n$ on $\Gamma$. We say that $F$ is a \emph{subbundle} of $E$, if we may choose transition functions $h_{ij}^e$ of $F$ and $g_{ij}^e$ of $E$ in such a way that 
\begin{equation*}
    \big[g_{ij}^e\big]=\begin{bmatrix}
        h_{ij}^e & \ast\\
        \infty& \ast
    \end{bmatrix}
\end{equation*}
for all $e\in E(G)$.
\end{definition}

Since the matrix $h^e_{ij}$ is invertible, the unique finite entry in each of the first $m$ rows occurs in one of the first $m$ columns. In other words, an invertible block-triangular matrix is block-diagonal, and we have the following result:

\begin{proposition}
\label{prop_subbundlesplit}
 Every subbundle $F$ of a vector bundle $E$ on $\Gamma$ is \emph{split}, i.e. there exists another subbundle $H$ of $E$ such that $E\simeq F\oplus H$. 
\end{proposition}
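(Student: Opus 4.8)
The plan is to exploit the elementary fact about invertible tropical matrices recorded just before the statement: a block–triangular matrix in $\GL_n(\TT)$ whose diagonal blocks are invertible is automatically block–diagonal. Once $g^e$ is block–diagonal, the complementary subbundle can simply be read off from the transition functions.

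First I would invoke the definition of a subbundle to fix an oriented simple model $(G,\ell)$ and trivializations with respect to which the transition functions $h^e=[h^e_{ij}]\in\GL_m(\TT)$ of $F$ and $g^e=[g^e_{ij}]\in\GL_n(\TT)$ of $E$ satisfy
\begin{equation*}
[g^e_{ij}]=\begin{bmatrix} h^e_{ij} & \ast\\ \infty & \ast\end{bmatrix}
\end{equation*}
for every $e\in E(G)$. I would then argue that the top–right $m\times(n-m)$ block is forced to equal $\infty$: each of the first $m$ rows of $g^e$ contains a unique finite entry, and since $h^e$ is invertible this entry already lies in one of the first $m$ columns. Hence $g^e=\mathrm{diag}(h^e,k^e)$ with $k^e:=[g^e_{ij}]_{m<i,j\le n}$ the bottom–right block. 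Next I would check that $k^e\in\GL_{n-m}(\TT)$: the finite entries of rows $m+1,\dots,n$ of $g^e$ must lie in columns $m+1,\dots,n$ (the first $m$ columns have already used up their single finite entry in the first $m$ rows), and since $g^e$ is invertible these positions set up a bijection between the last $n-m$ rows and the last $n-m$ columns, so $k^e$ has exactly one finite entry in each row and column.

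With this in hand I would let $H$ be the rank-$(n-m)$ vector bundle on $\Gamma$ given by the transition functions $\{k^e\}_{e\in E(G)}$; the cocycle condition is vacuous since all triple intersections in the star cover are empty. Reordering the basis vectors of the trivializations (equivalently, conjugating each $g^e=\mathrm{diag}(h^e,k^e)$ by a block permutation matrix) puts the transition functions of $E$ into the shape required in the definition of a subbundle with $k^e$ in the top–left corner, so $H$ is a subbundle of $E$. Finally, by the construction of the direct sum in Section~\ref{subseq:basic}, $F\oplus H$ has transition functions $\mathrm{diag}(h^e,k^e)=g^e$, and since two vector bundles are isomorphic exactly when their transition functions define the same class in $H^1(\Gamma,S_n\ltimes\calH_\Gamma^n)$, we obtain $E\simeq F\oplus H$.

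I do not expect a genuine obstacle: the whole argument is the linear algebra of matrices in $\GL_n(\TT)$. The only points demanding care are that the index partition $\{1,\dots,m\}\sqcup\{m+1,\dots,n\}$ is used consistently on every edge — which is precisely what the definition of a subbundle supplies — and that the complementary blocks $k^e$ genuinely land in $\GL_{n-m}(\TT)$ rather than merely among tropical matrices, which is the invertibility count indicated above.
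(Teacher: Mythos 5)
Your argument is correct and is exactly the route the paper takes: the paper's entire justification is the observation, stated in the sentence preceding the proposition, that an invertible block-triangular tropical matrix is block-diagonal because the unique finite entry in each of the first $m$ rows is already forced into the first $m$ columns by the invertibility of $h^e$. Your write-up simply fills in the details the paper leaves implicit (that the complementary blocks $k^e$ lie in $\GL_{n-m}(\TT)$ and assemble into a subbundle $H$ with $E\simeq F\oplus H$), and it does so correctly.
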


We also define the pushforward and pullback of vector bundles along a free cover.  Let $f:\tGa\to \Ga$ be a free cover of degree $d$. Choose models $G$ and $\tG$ for $\Ga$ and $\tGa$ such that $f$ corresponds to a graph morphism $\tG\to G$. Choose a labeling $f^{-1}(v)=\{\tv_1,\ldots,\tv_d\}$ of the preimages of each vertex $v\in V(G)$. Similarly, for each edge $e\in E(G)$ we label the preimages $f^{-1}(e)=\{\te_1,\ldots,\te_d\}$, where we assume that $s(\te_k)=\widetilde{s(e)}_k$. The cover is then determined by an $S_d$-valued cocycle $\{\sigma^e\}_{e\in E(G)}$, where for each $e\in E(G)$, the permutation $\sigma^e\in S_d$ is determined by the formula $t(\te_k)=\widetilde{t(e)}_{\sigma^e(k)}$.

\begin{definition} \label{def:pushpull} Let $f:\tGa\to \Ga$ be a free cover of metric graphs of degree $d$ with models $\tG$ and $G$.

\begin{enumerate}[(i)]
    \item Let $E$ be a vector bundle of rank $n$ on $\Ga$, given by transition functions $\{g^e_{ij}\}_{e\in E(G)}$ with respect to the model $G$. We define the \emph{pullback} $f^*E$ as the rank-$n$ vector bundle on $\tGa$ with transition functions
\[
\widetilde{g}^{\te}_{ij}=g^{f(\te)}_{ij}
\]
for $\te\in E(\tG)$.

    \item Let $\widetilde{E}$ be a vector bundle of rank $n$ on $\tGa$ defined by the cocycle $\{\widetilde{g}^{\te}_{ij}\}_{\te\in E(\tG)}$.  The \emph{pushforward} $f_*E$ is the rank-$n\cdot d$ vector bundle on $\Ga$ whose transition function over $e\in E(G)$ is the $n\cdot d$ block matrix whose $(k,l)$-th $n\times n$ block is $\widetilde{g}^{\te_k}_{ij}$ if $l=\sigma^e(k)$ and $\infty$ otherwise.

\end{enumerate}

\end{definition}


\section{Vector bundles and multidivisors}\label{section_multidivisors}


\subsection{Multidivisors}
Line bundles on a metric graph $\Gamma$ may be described in terms of linear equivalence classes of divisors on $\Gamma$. In the higher-rank situation, we have a similar description in terms of so-called \emph{multidivisors}. In this section we establish a dictionary between the two.

\begin{definition}
Let $\Gamma$ be a compact metric graph. 
\begin{enumerate}[(i)]
\item A \emph{multidivisor} $(f\colon\tGa\to \Ga,D)$ on $\Gamma$ of rank $n$ consists of a (possibly disconnected) topological cover $f\colon\widetilde{\Gamma}\rightarrow\Gamma$ of degree $n$ and a divisor $D$ on $\widetilde{\Gamma}$.
\item Two multidivisors $(f\colon\tGa\to \Ga,D)$ and $(f'\colon\tGa'\to \Ga,D')$ are said to be \emph{linearly equivalent} if there is an isometry $\phi\colon \widetilde{\Gamma}\xrightarrow{\sim}\widetilde{\Gamma}'$ of metric graphs such that $f'\circ \phi=f$ and such that the difference $D-\phi^\ast D'$ is a principal divisor on $\widetilde{\Gamma}$.
\end{enumerate}
\end{definition}

One may think of think of the notion of a multidivisor as a tropical incarnation of the Weil's notion of a matrix divisor from~\cite{Weil}. Given a multidivisor $(f,D)$ on a compact metric graph $\Gamma$, the pushforward
\begin{equation*}
    \calH(f,D):=\calH_\Gamma(f,D):=f_\ast\calH_{\widetilde{\Gamma}}(D)
\end{equation*}
is a vector bundle of rank $n$ on $\Gamma$. We now show that these two notions are essentially equivalent.

\begin{proposition}\label{prop_multidivisors}
Let $\Gamma$ be a compact metric graph. The association \begin{equation*}
(f,D)\longmapsto \calH(f,D)
\end{equation*} induces a natural bijection between linear equivalence classes of multidivisors of rank $n$ and isomorphism classes of vector bundles of rank $n$ on $\Gamma$. 

Under this correspondence, the basic operations correspond to the following:
\begin{enumerate}[(i)]
    \item \label{item:multi1} Given two multidivisors $(f_1,D_1)$ and $(f_2,D_2)$, we write $f_1\oplus g_1\colon \widetilde{\Gamma}_1\sqcup\widetilde{\Gamma}_2\rightarrow \Gamma$ for the disjoint union of the covers and $D_1\oplus D_2$ for the induced divisor on $\widetilde{\Gamma}_1\sqcup\widetilde{\Gamma}_2$. Then 
    \begin{equation*}
         \calH\big(f_1\oplus f_2, D_1\oplus D_2\big)\simeq \calH(f_1,D_1)\oplus  \calH(f_2,D_2) \ .
    \end{equation*}
    \item \label{item:multi2} Given two multidivisors $(f_1,D_1)$ and $(f_2,D_2)$, we write $f_1\times_\Gamma f_2\colon \widetilde{\Gamma}_1\times_\Gamma \widetilde{\Gamma}_2\rightarrow\Gamma$ for the fibered product of topological covers and 
    \begin{equation*}
        D_1\boxtimes D_2=\sum_{(p,q)\in \widetilde{\Gamma}_1\times_\Gamma \widetilde{\Gamma}_2} (D_1(p)+D_2(q))(p,q)
    \end{equation*}
    for the induced divisor on $\widetilde{\Gamma}_1\times_\Gamma \widetilde{\Gamma}_2$. Then 
    \begin{equation*}
         \calH\big(f_1\times_\Gamma f_2,D_1\boxtimes D_2\big)\simeq \calH(f_1,D_1)\otimes \calH(f_2,D_2) \ . 
    \end{equation*}
    \item \label{item:multi3} Given a multidivisor $(f,D)$ on $\Gamma$, we have 
    \begin{enumerate}
    \item \begin{equation*}
         \calH(f,-D)\simeq\calH(f,D)^\ast
    \end{equation*}
        as well as
    \item \begin{equation*}
         \det \calH(f,D)\simeq \calH(f_\ast D)
    \end{equation*}
    and
    \item \begin{equation*}
        \deg \calH(f,D)=\deg(D).
    \end{equation*}
    \end{enumerate}
    \end{enumerate}
Furthermore, under this equivalence, indecomposable vector bundles on $\Gamma$ correspond to those multidivisors $(f,D)$ for which $f$ is a connected topological cover. 
\end{proposition}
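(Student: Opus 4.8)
The plan is to construct the map in both directions and check that they are mutually inverse, using \v{C}ech cocycles with respect to a star cover throughout. First I would show that $\calH(f,D)$ is well-defined on linear equivalence classes: if $(f,D)$ and $(f',D')$ are linearly equivalent via an isometry $\phi$, then $\phi$ induces an isomorphism $\calH_{\tGa}(D)\simeq\phi^*\calH_{\tGa'}(D')$ of torsors (since $D-\phi^*D'$ is principal, Proposition~\ref{prop_linebundle=divisorclass} applies on $\tGa$), and since $f'\circ\phi=f$, pushing forward yields $f_*\calH_{\tGa}(D)\simeq f'_*\calH_{\tGa'}(D')$. Conversely, I would produce an inverse map: given a vector bundle $E$ of rank $n$, choose an oriented simple model $(G,\ell)$ and transition functions $g^e=(\sigma^e,g^e_1,\dots,g^e_n)$. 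The $S_n$-valued cocycle $\{\sigma^e\}$ defines a free degree-$n$ cover $f\colon\tGa\to\Gamma$ with a model $\tG\to G$ as in the paragraph preceding Definition~\ref{def:pushpull}, and the functions $g^e_i$, reindexed along the sheets $\te_k$ of $f$, assemble into an $\calH_{\tGa}$-valued \v{C}ech cocycle on $\tGa$, hence a line bundle $\calL$ on $\tGa$; by Proposition~\ref{prop_linebundle=divisorclass} this corresponds to a linear equivalence class of divisors $D$ on $\tGa$. Comparing with Definition~\ref{def:pushpull}(ii), one sees $f_*\calL\simeq E$ on the nose at the level of cocycles, so $\calH(f,D)\simeq E$.

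Next I would verify that this inverse is well-defined up to linear equivalence of multidivisors. The cover $f$ is determined up to isomorphism by the conjugacy class of the monodromy representation $\pi_1(\Gamma)\to S_n$, which is exactly the data of the class of $\{\sigma^e\}$ in $\check H^1(\Gamma,\underline{S_n})$; any two trivializations $g_v$ of $E$ differ by elements of $S_n\ltimes\calH_\Gamma(U_v)$, and the $S_n$-part changes $\{\sigma^e\}$ by a coboundary (giving an isometry $\phi$ of covers over $\Gamma$) while the harmonic part changes the line bundle cocycle on $\tGa$ by a coboundary (changing $D$ by a principal divisor). So different choices produce linearly equivalent multidivisors, and the two constructions are mutually inverse bijections. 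The compatibility statements (i)--(iii) are then checked on cocycles: for (i) the disjoint union of covers has transition permutations block-diagonal in $S_{m+n}$, matching the direct-sum formula in Section~\ref{subseq:basic}; for (ii) the fiber product cover has monodromy the tensor (Kronecker) action on $\{1,\dots,m\}\times\{1,\dots,n\}$ and the divisor $D_1\boxtimes D_2$ matches the Kronecker-sum formula for tensor-product transition functions; for (iii), negating $D$ negates the $g^e_i$ (dual bundle), pushing the divisor forward gives the sum of the finite slopes along each fiber which is the determinant cocycle, and $\deg\calH(f,D)=\sum_{\te}\dot g^{\te}=\deg D$ directly from the definition of degree.

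Finally, for the indecomposability claim: a vector bundle $E$ is decomposable iff $E\simeq E_1\oplus E_2$ with both $E_i$ of positive rank, which by part (i) and the bijection is equivalent to $(f,D)$ being linearly equivalent to $(f_1\oplus f_2, D_1\oplus D_2)$, i.e.\ to $f$ being a disconnected cover (a nontrivial disjoint union over $\Gamma$). Equivalently, $\tGa$ is connected iff the monodromy $\pi_1(\Gamma)\to S_n$ is transitive iff the $S_n$-cocycle is not (up to coboundary) block-diagonal; one direction is immediate and the other requires noting that any splitting of the cover into $f_1\sqcup f_2$ forces, after adjusting trivializations, a block form of all $\sigma^e$ and hence (by invertibility, as in Proposition~\ref{prop_subbundlesplit}) a block-diagonal form of the whole cocycle $g^e$, exhibiting $E$ as a direct sum. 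The main obstacle I anticipate is the bookkeeping in the converse construction: carefully matching the indexing of the sheets $\te_k$ over each vertex and edge with the permutation cocycle $\sigma^e$ so that the pushforward cocycle of Definition~\ref{def:pushpull}(ii) literally reproduces the original block form of $g^e$, and checking that reindexing ambiguities (relabeling sheets over each vertex) correspond precisely to coboundaries on both the cover and the line-bundle level — this is where a sloppy argument could hide a genuine gap, so I would set up the labels once and for all and track them explicitly.
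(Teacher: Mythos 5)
Your proposal is correct and follows essentially the same route as the paper's proof: split the \v{C}ech cocycle of $E$ over a star cover into its $S_n$-part (giving the free cover $f$) and its harmonic part (giving a line bundle, hence a divisor class, on $\widetilde\Gamma$ via Proposition~\ref{prop_linebundle=divisorclass}), observe that $f_\ast\calL_E\simeq E$ at the cocycle level, and verify (i)--(iii) and the indecomposability claim by cocycle computations. You simply spell out the well-definedness and bookkeeping steps that the paper declares elementary and leaves to the reader.
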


\begin{proof} Choose a simple model $G$ for $\Ga$. Let $E$ be a rank-$n$ vector bundle on $\Ga$ with transition functions $\{\sigma^e,g^e_1,\ldots,g^n_e\}_{e\in E(G)}$, where $\sigma^e\in S_n$ and $g^i_e\in \mc H(e)$. The $S_n$-cocycle $\{\sigma^e\}_{e\in E(G)}$ determines a free cover $f:\tGa\to \Ga$ of degree $n$, which we describe using the notation from the paragraph preceding Definition~\ref{def:pushpull}. Let $\calL_E$ be the line bundle on $\tGa$ whose transition function on the edge $\te_i$ over $e\in E(G)$ is equal to $g^i_e$. It follows from Definition~\ref{def:pushpull} that $f_*\calL_E\simeq E$. Choosing a divisor $D\in \Div(\tGa)$ such that $\calL_E\simeq \mc H_{\tGa}(D)$, it is elementary to verify that the isomorphism class of the pair $(f,D)$ does not depend on the choices made. The verification of (\ref{item:multi1})-(\ref{item:multi3}) follows immediately by calculating cocycles with respect to the cover $G$ (see the definitions of the basic operation in Section~\ref{subseq:basic}) and is left to the avid reader. 
\end{proof}

\begin{example}[Vector bundles on metric trees]\label{example_BirkhoffGrothendieck} Let $\Gamma$ be a compact and connected metric tree. Two divisors on $\Gamma$ are linearly equivalent if and only if they have the same degree. So, up to isomorphism, there is exactly one line bundle $\calH_{\Ga}(d)$ of degree $d$ on $\Gamma$.

Since the fundamental group of $\Gamma$ is trivial, the only connected covering space of $\Ga$ is $\Gamma$ itself. Hence for each $n$ there is a unique degree $n$ free cover $\widetilde{\Gamma}\rightarrow \Gamma$ consisting of $n$ disjoint copies of $\Gamma$. By Proposition~\ref{prop_multidivisors}, this means that every vector bundle $E$ of rank $n$ on $\Gamma$ splits as a sum of line bundles
\begin{equation*}
    E\simeq \calH_\Gamma(a_1)\oplus \cdots \oplus \calH_\Gamma(a_n)
\end{equation*}
for a unique multiset of $n$ integers $a_1,\ldots, a_n$. This is a tropical analogue of the Birkhoff--Grothendieck theorem which states that any vector bundle on $\PP^1$ is a direct sum of line bundles (see ~\cite{Grothendieck_vectorbundlesonP1}). 
\end{example}


\subsection{The moduli space $\Bun_n(\Gamma)$}\label{section_tropicalmoduli}
 Proposition~\ref{prop_multidivisors} may be used to describe the moduli space $\Bun_n(\Gamma)$ of all vector bundles on a compact and connected metric graph $\Gamma$. One may think of this in analogy with~\cite{NarasimhanSeshadri_moduli}, where a moduli space of (semi)-stable vector bundles is constructed using the Narasimhan-Seshadri correspondence.

\begin{corollary}\label{cor_modulicomponents}
Let $\Gamma$ be a compact and connected metric graph. The set $\Bun_n(\Gamma)$ of isomorphism classes of rank $n$ vector bundles on $\Gamma$ is naturally given by
\begin{equation*}
    \Bun_n(\Gamma)=\bigsqcup_{f\colon \widetilde{\Gamma}\rightarrow\Gamma} \Pic(\widetilde{\Gamma})/\Aut(f) \ ,
\end{equation*}
where the union is taken over isomorphism classes of topological covers $f\colon\widetilde{\Gamma}\rightarrow \Gamma$ of degree $n$, and $\Aut(f)$ is the group of deck transformations of $f$ acting on $\Pic(\Gamma)$ via pullback.
\end{corollary}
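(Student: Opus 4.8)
The plan is to deduce this directly from the bijection established in Proposition~\ref{prop_multidivisors}, which identifies isomorphism classes of rank $n$ vector bundles on $\Gamma$ with linear equivalence classes of multidivisors of rank $n$. A multidivisor is a pair $(f\colon\widetilde\Gamma\to\Gamma, D)$ with $f$ a degree $n$ topological cover and $D\in\Div(\widetilde\Gamma)$, so the set of all multidivisors fibers over the set of (honest, not iso) covers $f$; I would first reorganize this fibration so that the base is the \emph{set of isomorphism classes} of covers. Since $\Gamma$ is connected, covers of degree $n$ are classified up to isomorphism by conjugacy classes of homomorphisms $\pi_1(\Gamma)\to S_n$, a finite set, so the disjoint union is finite and well-defined.

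The heart of the argument is to unwind what linear equivalence of multidivisors becomes once we fix a cover $f$ within its isomorphism class. First I would observe that, having fixed one representative $f\colon\widetilde\Gamma\to\Gamma$ of each isomorphism class, every multidivisor with that cover-class is equivalent to one of the form $(f,D)$: given $(f'\colon\widetilde\Gamma'\to\Gamma, D')$ with $f'\cong f$, choose an isometry $\psi\colon\widetilde\Gamma'\xrightarrow{\sim}\widetilde\Gamma$ with $f\circ\psi=f'$ and replace $D'$ by $\psi_*D'$; this is a linear equivalence by definition. Then for two multidivisors $(f,D_1)$ and $(f,D_2)$ over the \emph{same} fixed cover $f$, the definition of linear equivalence says they are equivalent iff there is an isometry $\phi\colon\widetilde\Gamma\xrightarrow{\sim}\widetilde\Gamma$ with $f\circ\phi=f$ — that is, a deck transformation $\phi\in\Aut(f)$ — such that $D_1-\phi^*D_2$ is principal, i.e.\ $[D_1]=\phi^*[D_2]$ in $\Pic(\widetilde\Gamma)$. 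Hence the linear equivalence classes of multidivisors over the fixed cover $f$ are exactly the orbits of the $\Aut(f)$-action (via pullback) on $\Pic(\widetilde\Gamma)$, giving the quotient $\Pic(\widetilde\Gamma)/\Aut(f)$. Taking the disjoint union over the (finitely many) isomorphism classes of covers yields the claimed description, and one should note that the pullback action on $\Pic$ is well-defined because $f^*$ respects linear equivalence (as recalled before Section~\ref{section_linebundles}; more precisely the deck group acts by pullback of divisors which descends to $\Pic$).

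One point requiring a little care — and the main potential obstacle — is checking that no two multidivisors whose \emph{covers lie in different isomorphism classes} can be linearly equivalent, so that the union really is disjoint. This is immediate from the definition: a linear equivalence $(f,D)\sim(f',D')$ requires an isometry $\phi\colon\widetilde\Gamma\xrightarrow{\sim}\widetilde\Gamma'$ with $f'\circ\phi=f$, which is precisely an isomorphism of covers, so the cover-class is an invariant of the linear equivalence class. The remaining points — that the map $(f,D)\mapsto\calH(f,D)$ of Proposition~\ref{prop_multidivisors} is the bijection we transport along, and that the quotients $\Pic(\widetilde\Gamma)/\Aut(f)$ are taken as sets (no scheme or topological structure is asserted here) — are then formal. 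I expect the whole proof to be short, essentially a matter of carefully rewriting Definition~\ref{prop_multidivisors}'s equivalence relation as a groupoid quotient and invoking the classification of covers of a graph by its fundamental group.
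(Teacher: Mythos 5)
Your proposal is correct and follows exactly the route the paper takes: the paper's own proof consists of the single sentence that the corollary is an immediate consequence of Proposition~\ref{prop_multidivisors}, and your argument simply spells out that deduction (fixing representatives of cover classes, identifying linear equivalence over a fixed cover with the $\Aut(f)$-action on $\Pic(\widetilde\Gamma)$, and noting disjointness across cover classes). No gaps; the extra care about disjointness and the classification of covers by $\pi_1(\Gamma)\to S_n$ is a reasonable elaboration of what the paper leaves implicit.
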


\begin{proof}
This is an immediate consequence of Proposition~\ref{prop_multidivisors}.
\end{proof}

For a fixed free cover $f\colon\widetilde{\Gamma}\rightarrow \Gamma$, we write 
\begin{equation*}
    \widetilde{\Gamma}=\widetilde{\Gamma}_1\sqcup \cdots \sqcup \widetilde{\Gamma}_k
\end{equation*} for the decomposition into connected components. We then have
\begin{equation*}
    \Pic(\widetilde{\Gamma})= \Pic(\widetilde{\Gamma}_1)\times \cdots \times \Pic(\widetilde{\Gamma}_k)
\end{equation*}
and every $\Pic_d(\widetilde{\Gamma}_i)$ is naturally a torsor over the real torus $\Pic_0(\widetilde{\Gamma}_i)\simeq\Jac(\widetilde{\Gamma}_i)$. So $\Bun_n(\Gamma)$ is naturally a disjoint union of finite group quotients of torsors over real tori. 

We point out that if $\Gamma$ has genus $g$, then $\widetilde{\Gamma}$ has genus $\widetilde{g}=n(g-1)+1$. Hence we obtain that
\begin{equation*}
    \dim_{\RR} \Bun_n(\Ga)=n(g-1)+1.
\end{equation*}
Classically, the dimension of the moduli space of vector bundles on an algebraic curve of genus $g$ is equal to $d(g-1)+1$, where $d$ is the dimension of the structure group, and in particular the dimension is $n^2(g-1)+1$ when the structure group is $\GL_n$. This discrepancy is explained by noting that $\dim_{\RR}\GL_n(\TT)=n$, not $n^2$, and is further clarified by Theorem~\ref{mainthm_skel=trop}, which states that tropical vector bundles (according to our definition) arise as tropicalizations of geometrically $S_n\ltimes\G_m^n$-linearized vector bundles; so the algebraic structure group actually has dimension $n$, not $n^2$. In the case $g=1$, the two dimensions agree and we have a much stronger tropicalization result in Theorem~\ref{mainthm_Tatecurve}.


\subsection{Vector bundles on tropical elliptic curves}\label{section_Atiyah} Let $\ell>0$ be a real number, and let $\Gamma=\R/\ell\Z$ be a circle of length $\ell$ (for example, $\Gamma$ could be the minimal skeleton of a Tate curve $X^{\an}=\G_m^{\an}/q^\Z$, where $\ell$ is the valuation of $q$). We determine the structure of the set $M_{n,d}^{\ind}(\Ga)$ of isomorphism classes of indecomposable vector bundles of rank $n$ and degree $d$ on $\Ga$. 

Since $\pi_1(\Gamma)=\Z$, for each $n\geq 1$ there is a unique connected free cover $f:\widetilde{\Gamma}\to \Ga$ of degree $n$, where $\tGa=\R/n\ell\Z$ is a circle of length $n\ell$ and the covering map $f\colon \widetilde \Gamma\to \Gamma$ 
is the quotient map $\R/n\ell\Z\to \R/\ell\Z$. For a real number $x\in \RR$, we denote the corresponding points on $\Ga$ and $\tGa$ by $p_x=x+\ell \ZZ$ and $\tp_x=x+n\ell \ZZ$, respectively. The group $\Aut (f)$ is the cyclic group of order $n$ generated by the deck transformation $g(\tp_x)=\tp_{x+\ell}$.

We can identify $\Ga=\Pic^0(\Ga)$ via the map $p_x\mapsto [p_x-p_0]$ (which is in fact an isomorphism of groups), and similarly $\tGa$ is identified with $\Pic^0(\tGa)$. In terms of these identifications, the pullback morphism
 \[
 f^*\colon \Gamma=\Pic^0(\Gamma)\longrightarrow \Pic^0(\widetilde \Gamma)=\widetilde \Gamma
 \]
is given by multiplication by $n$, in other words it is the bijection $p_x-p_0\mapsto \tp_{nx}-\tp_0$. The set $\Pic^d(\tGa)$ is a torsor over $\Pic^0(\tGa)$, and we translate by $d\tp_0$ to obtain the bijection
\begin{equation*}
    \begin{split}
        \Ga &\longrightarrow \Pic^d(\tGa)\\
        p_x& \longmapsto d\tp_0+f^*(p_x)=(d-1)\tp_0+\tp_{nx}.
    \end{split}
\end{equation*}
By Corollary~\ref{cor_modulicomponents} we can identify $M_{n,d}^{\ind}(\Ga)$ with the quotient $\Pic^d(\tGa)/\Aut (f)$. The generator $g$ of $\Aut(f)$ acts on $\Pic^d(\tGa)$ (identified with $\tGa$) by translation by $d\ell$, hence $p_x$ and $p_{x'}$ define the same vector bundle if and only if $x-x'$ is a multiple of $\frac{d\ell}{n}$. The multiples of $\frac{d\ell}{n}$ correspond to the $n'=\frac{n}{\gcd(d,n)}$-torsion points of $\Gamma$ and constitute the kernel of the multiplication by $n'$ map
\begin{equation*}\begin{split}
     \Gamma&\longrightarrow \Gamma\\
     p_x&\longmapsto p_{n'x} \ .
\end{split}\end{equation*}

We now summarize our results for future use and rephrase them in the notation of Proposition~\ref{prop_multidivisors}. Consider the  vector bundle
\[
E^\trop(n,d)= \mc H(f, d\cdot \tp_0)
\]
of degree $d$ and rank $n$ on $\Ga$. Then the formula
\[
\Psi(p_x)=\mc H\big(f,d \tp_0+f^*(p_x-p_0)\big) = E^\trop(n,d)\otimes \mc H_{\Ga}(p_x-p_0) \ . 
\]
defines a surjective map $\Psi:\Gamma \to M_{n,d}^{\ind}(\Ga)$.
Hence the map $\Psi$ fits into the diagram
\begin{equation}
    \label{equation_vector bundles on tropical elliptic curves}
    \begin{tikzcd}
        \Gamma \arrow [r,"\cdot n'"] \arrow[d,"\Psi",swap]  &  \Gamma \arrow[ld, "\simeq"]  \\
        M_{n,d}^{\ind}(\Ga) &
    \end{tikzcd},
 \end{equation}
where $\Ga\xrightarrow{\cdot n'} \Ga$ is the multiplication by $n'$ map, and we have an identification of $M_{n,d}^{\ind}(\Ga)$ with $\Ga$. 

We may view this identification as a tropical analogue of Atiyah's classification of indecomposable vector bundles on an elliptic curve from~\cite{Atiyah_ellipticvectorbundles}.


\section{Baker--Norine rank and a Weil--Riemann--Roch theorem}\label{section_WeilRiemannRoch}

\subsection{Linear systems and Baker--Norine rank}
Let $\Gamma$ be a metric graph. A divisor $D\in\Div(\Gamma)$ is said to be \emph{effective}, written as $D\geq 0$, if $D(p)\geq 0$ for all $p\in\Gamma$. We write $D\geq D'$ for divisors $D,D'\in\Div(\Gamma)$ if $D-D'\geq 0$. The \emph{(complete) linear system} $\vert D\vert$ associated to $D\in\Div(\Gamma)$ is the set
\begin{equation*}
    \vert D\vert=\big\{D'\geq 0\mid D\sim D'\big\} \ .
\end{equation*}

Let $\Gamma$ be a compact metric graph. The \emph{Baker--Norine rank} $r_\Gamma(D)$ of a divisor $D$ on $\Gamma$ was introduced in~\cite{BakerNorine} and is defined as follows. Suppose first that $\Gamma$ is connected. If $\vert D\vert=\emptyset$, we set $r(D)=-1$. Otherwise we define $r(D)$ to the maximal $r\geq 0$ such that $\vert D-E\vert\neq \emptyset$ for all effective divisors $E$ of degree $r$. Now let $\Gamma$ be a disconnected curve, and let  $\Gamma=\Gamma_1\sqcup\cdots\sqcup\Gamma_k$ be the decomposition into connected components. We recall that for a divisor $D$ on an algebraic curve $X$ we have $r(D)=h^0(\calO_X(D))-1$, and that the quantity $h^0$ is additive in connected components. Motivated by this, we define $r_\Ga(D)$ by the identity \begin{equation*}
r_\Gamma(D)=\sum_{i=1}^k (r_{\Gamma_i}(D\vert_{\Gamma_i})+1)-1=\sum_{i=1}^k r_{\Gamma_i}(D\vert_{\Gamma_i})+k-1 \ .
\end{equation*}
The \emph{canonical divisor} $K_\Gamma$ on a metric graph $\Gamma$ is defined to be 
\begin{equation*}
    K_\Gamma=\sum_{p\in\Gamma} (\val(p)-2)\cdot p \ , 
\end{equation*}
where $\val(p)$ denotes the valence of $\Gamma$ at $p$, i.e. the number of outgoing half-edges from $p$.

The Riemann--Roch formula from~\cite{BakerNorine, MikhalkinZharkov, GathmannKerber} states that, for every divisor $D$ of degree $d$ on a compact and connected metric graph $\Gamma$ of genus $g$, we have 
\begin{equation*}
    r_\Gamma(D)-r_\Gamma(K_\Gamma-D)=d-g+1 \ .
\end{equation*}
As its classical counterpart, this combinatorial Riemann--Roch formula has already found numerous applications; we refer the reader to~\cite[Section 4.3]{BakerJensen} for a concise survey of these applications as well as an outline of its history and proof.

We first observe that the following extension of the Riemann--Roch formula holds for the Baker--Norine rank on arbitrary (possibly disconnected) compact metric graphs.

\begin{proposition}\label{prop_nonconnectedRR}
Let $\Gamma$ be a compact metric graph, and let $D$ be a divisor of degree $d$ on $\Gamma$. Then
\begin{equation*}
    r_\Gamma(D)-r_\Gamma(K_\Gamma-D)=d+\chi(\Gamma) \ .
\end{equation*}
\end{proposition}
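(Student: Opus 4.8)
The statement is additive in connected components on both sides, so the natural strategy is to reduce the disconnected case to the connected Riemann--Roch theorem of~\cite{BakerNorine, MikhalkinZharkov, GathmannKerber}. Write $\Gamma = \Gamma_1 \sqcup \cdots \sqcup \Gamma_k$ for the decomposition into connected components, with $\Gamma_i$ of genus $g_i$, and let $D_i = D|_{\Gamma_i}$ have degree $d_i$. Then $d = \sum_i d_i$, and since the Euler characteristic is additive in connected components, $\chi(\Gamma) = \sum_i \chi(\Gamma_i) = \sum_i (1 - g_i) = k - \sum_i g_i$. Moreover the canonical divisor is local: $K_\Gamma|_{\Gamma_i} = K_{\Gamma_i}$, since $\val(p)$ computed in $\Gamma$ equals $\val(p)$ computed in the component containing $p$.

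\textbf{Key steps.} First I would record the additivity of Baker--Norine rank in the shifted form: from the defining identity $r_\Gamma(D) + 1 = \sum_i \big(r_{\Gamma_i}(D_i) + 1\big)$, and likewise $r_\Gamma(K_\Gamma - D) + 1 = \sum_i \big(r_{\Gamma_i}(K_{\Gamma_i} - D_i) + 1\big)$ using $K_\Gamma - D$ restricted to $\Gamma_i$ equals $K_{\Gamma_i} - D_i$. Subtracting these two identities gives
\begin{equation*}
    r_\Gamma(D) - r_\Gamma(K_\Gamma - D) = \sum_{i=1}^k \big( r_{\Gamma_i}(D_i) - r_{\Gamma_i}(K_{\Gamma_i} - D_i) \big),
\end{equation*}
since the $+1$'s and the $k$'s cancel in the difference. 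Now apply the connected Riemann--Roch theorem to each term on the right: $r_{\Gamma_i}(D_i) - r_{\Gamma_i}(K_{\Gamma_i} - D_i) = d_i - g_i + 1 = d_i + \chi(\Gamma_i)$. Summing over $i$ yields $\sum_i d_i + \sum_i \chi(\Gamma_i) = d + \chi(\Gamma)$, which is exactly the claimed formula.

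\textbf{The main obstacle.} There is no deep obstacle here; the proof is essentially a bookkeeping argument once the connected case is granted. The one point requiring a little care is the compatibility of the defining convention for $r_\Gamma$ on disconnected graphs with the restriction of the canonical divisor and with the cancellation of the $k-1$ correction term — i.e., making sure that the "$+1$" normalization (mirroring $h^0 = r + 1$ on algebraic curves) is handled consistently on both sides of the equation so that the correction terms disappear upon subtraction. I would also briefly note the edge case where some $|D_i|$ or $|K_{\Gamma_i} - D_i|$ is empty, so that $r_{\Gamma_i} = -1$; the connected Riemann--Roch theorem is still valid in that situation, so no separate treatment is needed. The rest is immediate from additivity of degree and of Euler characteristic.
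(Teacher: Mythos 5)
Your proof is correct and follows essentially the same route as the paper: decompose into connected components, use the additivity convention $r_\Gamma(D)+1=\sum_i\big(r_{\Gamma_i}(D_i)+1\big)$ so the $k-1$ corrections cancel in the difference, apply the connected Riemann--Roch theorem componentwise, and sum using additivity of degree and Euler characteristic. The remarks on $K_\Gamma\vert_{\Gamma_i}=K_{\Gamma_i}$ and the $r=-1$ edge case are fine but do not alter the argument.
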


\begin{proof}
    Let $\Gamma=\Gamma_1\sqcup\cdots\sqcup\Gamma_k$ be the decomposition into connected components and write $D_i=D|_{\Ga_i}$ for the restriction of $D$ to $\Gamma_i$ as well as $g_i$ for the genus of each $\Gamma_i$. Then $K_{\Ga_i}=K_{\Ga}|_{\Ga_i}$, and by the definition of $r_{\Ga}$ we have 
        \begin{equation*}\begin{split}
        r_\Gamma(D)-r_\Gamma(K_\Gamma-D)& =
        \sum_{i=1}^k r_{\Ga_i}\big(D\vert_{\Ga_i}\big)+k-1-\left[\sum_{i=1}^k r_{\Ga_i}\big(K_{\Gamma}\vert_{\Ga_i}-D\vert_{\Ga_i}\big)+k-1\right]\\
        & = \sum_{i=1}^k \big(r_{\Gamma_i}(D_i)-r_{\Gamma_i}(K_{\Gamma_i}-D_i)\big)\\
        &=\sum_{i=1}^k\big(\deg(D_i) - g_i+1\big)\\
        &=d +\chi(\Gamma) \ ,
    \end{split}\end{equation*}
since 
\begin{equation*}
\chi(\Gamma)=\sum_{i=1}^k\chi(\Gamma_i)=\sum_{i=1}^k(1-g_i) \ .
\end{equation*}
\end{proof}

\subsection{The higher rank case} Multidivisors allow us to generalize the definition of the Baker--Norine rank of a complete linear system on $\Gamma$ to tropical vector bundles. 

\begin{definition}
Let $\Gamma$ be a compact metric graph. We define the \emph{Baker--Norine rank} $r_\Gamma(f,D)$ of a multidivisor $(f\colon\tGa\to \Ga,D)$ simply to be the Baker--Norine rank $r_{\tGa}(D)$ of the divisor $D$ on $\widetilde{\Gamma}$. Given a vector bundle $E$ on $\Gamma$, we define its \emph{Baker--Norine rank} by
 \begin{equation*}
     r_\Gamma(E)=r_\Gamma(f,D)
 \end{equation*}
 for one (and automatically all) multidivisors $(f,D)$ such that $f_\ast \calH(D)\simeq E$.
\end{definition}

\begin{theorem}[Weil--Riemann--Roch]\label{thm_WeilRiemannRoch}
Let $\Gamma$ be a compact metric graph. For a vector bundle $E$ of degree $d$ on $\Gamma$ the formula
\begin{equation*}
r_\Gamma(E)-r_\Gamma\big(\calH(K_\Gamma)\otimes E^\ast\big)= d+n\chi(\Gamma) 
\end{equation*}
holds.
\end{theorem}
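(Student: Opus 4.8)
The plan is to transport everything to a suitable free cover and then invoke the Riemann--Roch formula for possibly disconnected metric graphs (Proposition~\ref{prop_nonconnectedRR}). First I would use Proposition~\ref{prop_multidivisors} to choose a multidivisor $(f\colon\tGa\to\Gamma, D)$ representing $E$, where $f$ is a free cover of degree $n$ and $D$ is a divisor on $\tGa$; by Proposition~\ref{prop_multidivisors}\eqref{item:multi3}(c) we have $\deg D = \deg E = d$, and by definition $r_\Gamma(E)=r_{\tGa}(D)$.

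Next I would identify the multidivisor representing $\calH(K_\Gamma)\otimes E^\ast$. By Proposition~\ref{prop_multidivisors}\eqref{item:multi3}(a), $E^\ast\simeq\calH(f,-D)$, while the line bundle $\calH(K_\Gamma)$ is $\calH(\id_\Gamma, K_\Gamma)$. The fibered product of $f\colon\tGa\to\Gamma$ with $\id_\Gamma$ is just $f$ itself, and the box-product divisor $(-D)\boxtimes K_\Gamma$ becomes $f^\ast K_\Gamma - D$; hence Proposition~\ref{prop_multidivisors}\eqref{item:multi2} gives
\begin{equation*}
\calH(K_\Gamma)\otimes E^\ast\simeq\calH\big(f,\, f^\ast K_\Gamma - D\big),
\end{equation*}
so that $r_\Gamma\big(\calH(K_\Gamma)\otimes E^\ast\big)=r_{\tGa}\big(f^\ast K_\Gamma - D\big)$. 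The one geometric input needed here is the identity $f^\ast K_\Gamma = K_{\tGa}$: since a free cover is a local isometry, it preserves valences, i.e.\ $\val_{\tGa}(\tp)=\val_\Gamma(f(\tp))$ for every $\tp\in\tGa$, and expanding $f^\ast K_\Gamma=\sum_p(\val(p)-2)\sum_{\tp\in f^{-1}(p)}\tp$ yields exactly $K_{\tGa}$. (This is the free-cover case of the tropical Riemann--Hurwitz formula.)

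Finally I would apply Proposition~\ref{prop_nonconnectedRR} to the divisor $D$ on the compact (possibly disconnected) metric graph $\tGa$:
\begin{equation*}
r_{\tGa}(D)-r_{\tGa}(K_{\tGa}-D)=\deg D + \chi(\tGa).
\end{equation*}
Substituting $r_\Gamma(E)=r_{\tGa}(D)$, $r_\Gamma(\calH(K_\Gamma)\otimes E^\ast)=r_{\tGa}(K_{\tGa}-D)$, $\deg D=d$, and $\chi(\tGa)=n\cdot\chi(\Gamma)$ (the Euler-characteristic multiplicativity for free covers recalled in Section~1) gives the claimed formula. There is no serious obstacle here once the dictionary of Proposition~\ref{prop_multidivisors} is in place; the only points requiring care are the bookkeeping identifying the tensor product with the identity cover and the equality $f^\ast K_\Gamma = K_{\tGa}$, and — crucially — the fact that $\tGa$ need not be connected, which is precisely why Proposition~\ref{prop_nonconnectedRR} was isolated beforehand.
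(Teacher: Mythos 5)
Your proposal is correct and follows essentially the same route as the paper: represent $E$ by a multidivisor $(f,D)$, identify $\calH(K_\Gamma)\otimes E^\ast$ with the multidivisor $(f,K_{\tGa}-D)$, and apply the Riemann--Roch formula for possibly disconnected compact metric graphs (Proposition~\ref{prop_nonconnectedRR}) together with $\chi(\tGa)=n\chi(\Gamma)$. The only difference is that you spell out the identification $\calH(K_\Gamma)\otimes E^\ast\simeq\calH(f,f^\ast K_\Gamma-D)$ and the equality $f^\ast K_\Gamma=K_{\tGa}$ in more detail than the paper, which simply cites Proposition~\ref{prop_multidivisors}; this is a welcome clarification, not a deviation.
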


\begin{proof}
Let $(f,D)$ be a multidivisor such that $f_\ast\calH(D)\simeq E$. By Proposition~\ref{prop_multidivisors}, the multidivisor $(f,K_{\widetilde{\Gamma}}-D)$ represents the vector bundle $\calH(K_\Gamma)\otimes E^\ast$ on $\Gamma$. Then the Riemann--Roch formula for divisors from~\cite{BakerNorine, MikhalkinZharkov, GathmannKerber} (generalized to disconnected metric graphs as in Proposition~\ref{prop_nonconnectedRR}) can be applied to $D$ on $\widetilde{\Gamma}$ and tells us that
\begin{equation*}
    r_{\tGa}(D)-r_{\tGa}(K_{\widetilde{\Gamma}}-D)=d+\chi(\widetilde\Gamma) \ .
\end{equation*}
Since $f\colon \widetilde{\Gamma}\rightarrow \Gamma$ is a covering space of degree $n$, we have \begin{equation*}
    \chi(\widetilde\Gamma)=n\chi(\Gamma) \ .
\end{equation*}
Thus we obtain the desired formula
\begin{equation*}
r_\Gamma(E)-r_\Gamma\big(\calH(K_\Gamma)\otimes E^\ast\big)= d+n\chi(\Gamma) \ .
\end{equation*}
\end{proof}



\section{Semistable bundles and a Narasimhan--Seshadri correspondence}\label{section_NarasimhanSeshadri}

In this section we prove an analogue of the Narasimhan--Seshadri correspondence on a metric graph.

\begin{definition}
Let $\Gamma$ be a compact metric graph.
\begin{enumerate}[(i)]
    \item The \emph{slope} of a vector bundle $E$ on $\Gamma$ is defined to be the quotient 
    \begin{equation*}
        \mu(E)=\frac{\deg E}{\rk E}
    \end{equation*}
    \item A vector bundle $E$ on $\Gamma$ is said to be \emph{semistable} if for all subbundles $E'\subseteq E$, we have 
    \begin{equation*}
        \mu(E')\leq \mu(E) \ . 
    \end{equation*}
    If this inequality is strict for all proper subbundles, we say that $E$ is \emph{stable}.
\end{enumerate}
 
\end{definition}

\begin{proposition}\label{prop_stability}
Let $\Gamma$ be a compact and connected metric graph. 
\begin{enumerate}[(i)]
    \item A vector bundle on $\Gamma$ is stable if and only if it is indecomposable.
    \item A vector bundle $\calE$ that decomposes as $E=E_1\oplus \cdots \oplus E_r$ with all $E_i$ indecomposable is semistable if and only if 
    \begin{equation*}
        \mu(E_1)=\cdots =\mu(E_r) \ .
    \end{equation*}
\end{enumerate}
\end{proposition}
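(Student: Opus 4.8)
The plan is to leverage two results established just above: that every subbundle of a vector bundle is in fact a direct summand (Proposition~\ref{prop_subbundlesplit}), and the equivalence between vector bundles and multidivisors (Proposition~\ref{prop_multidivisors}). The first says that, apart from direct summands, there are no subbundles, so we only need to understand which bundles occur as direct summands; the second pins these down via connected components of covers. Once the subbundle structure is understood, both parts reduce to one-line computations with slopes, using that $\deg$ and $\rk$ are additive under direct sums.

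The first step is a Krull--Schmidt statement: the decomposition of a vector bundle into indecomposable summands is unique up to isomorphism and reordering. Suppose $E\simeq\bigoplus_{i=1}^r E_i\simeq\bigoplus_{j=1}^s F_j$ with all $E_i$, $F_j$ indecomposable, and choose multidivisors $(f_i,D_i)$, $(g_j,B_j)$ representing them; by Proposition~\ref{prop_multidivisors} the covers $f_i$, $g_j$ are connected. Since $\calH(-)$ is compatible with direct sums and induces a bijection on equivalence classes, the two disjoint-union multidivisors $\big(\bigsqcup_i f_i,\ \bigoplus_i D_i\big)$ and $\big(\bigsqcup_j g_j,\ \bigoplus_j B_j\big)$ represent the same bundle $E$, hence are linearly equivalent. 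A linear equivalence of multidivisors is an isometry over $\Gamma$ of the total covering graphs matching the divisors up to linear equivalence; since it carries connected components to connected components, it forces $r=s$ and, after reindexing, restricts to linear equivalences $(f_i,D_i)\sim(g_i,B_i)$, i.e.\ $E_i\simeq F_i$. Combining this with Proposition~\ref{prop_subbundlesplit} yields the statement I will actually use: if $E=\bigoplus_{i=1}^r E_i$ with the $E_i$ indecomposable, then every subbundle $E'\subseteq E$ is isomorphic to $\bigoplus_{i\in S}E_i$ for some $S\subseteq\{1,\dots,r\}$. Indeed, by Proposition~\ref{prop_subbundlesplit} we may write $E\simeq E'\oplus H$; decomposing $E'$ and $H$ into indecomposables gives a second indecomposable decomposition of $E$, which by the uniqueness just proved agrees with $\bigoplus_i E_i$ after reordering, so the summands of $E'$ are, up to isomorphism, exactly the $E_i$ with $i$ in some subset $S$.

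For part (i): if $E$ is stable and $E=A\oplus B$ with $A$, $B$ nonzero, then $A$ and $B$ are nonzero proper subbundles, so $\mu(A)<\mu(E)$ and $\mu(B)<\mu(E)$; but additivity of $\deg$ and $\rk$ exhibits $\mu(E)$ as a weighted average of $\mu(A)$ and $\mu(B)$, both strictly smaller than $\mu(E)$, which is absurd. Hence a stable bundle is indecomposable. Conversely, if $E$ is indecomposable, then $r=1$ in the structural statement, so the only subbundles of $E$ are $0$ and $E$ itself; having no nonzero proper subbundle, $E$ satisfies the stability condition vacuously and is therefore stable.

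For part (ii), write $n_i=\rk(E_i)$ and $\mu_i=\mu(E_i)$, and note that $\sum_i n_i\mu_i=\sum_i\deg(E_i)=\deg(E)=\mu(E)\sum_i n_i$, so $\sum_i n_i\big(\mu_i-\mu(E)\big)=0$. If $E$ is semistable, then each $E_i$ is a subbundle, so each $n_i\big(\mu_i-\mu(E)\big)\le 0$; being a sum of nonpositive terms equal to $0$, every term vanishes and $\mu_1=\dots=\mu_r=\mu(E)$. Conversely, if $\mu_1=\dots=\mu_r=\mu$, then $\mu(E)=\mu$, and for any nonzero subbundle $E'\subseteq E$ the structural statement gives $E'\simeq\bigoplus_{i\in S}E_i$ with $S\neq\emptyset$, whence $\mu(E')=\big(\sum_{i\in S}n_i\mu\big)/\big(\sum_{i\in S}n_i\big)=\mu=\mu(E)$; thus $E$ is semistable. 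The only substantial point is the structural description of subbundles in the second paragraph: a priori a subbundle of $\bigoplus_i E_i$ could be embedded ``diagonally'', and it is the rigidity of linear equivalence of multidivisors --- an honest isometry of covering graphs over $\Gamma$, respecting divisor classes componentwise --- that rules this out; everything after that is elementary slope arithmetic.
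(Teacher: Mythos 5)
Your proof is correct, and its slope arithmetic (the weighted-average identity $\mu(E)=\sum_i \frac{n_i}{n}\mu_i$ together with Proposition~\ref{prop_subbundlesplit}) is exactly the engine the paper uses. Where you genuinely diverge is in how you control \emph{arbitrary} subbundles of a direct sum: you first prove a Krull--Schmidt statement --- uniqueness of the decomposition into indecomposables --- by translating both decompositions into multidivisors via Proposition~\ref{prop_multidivisors} and observing that a linear equivalence of multidivisors is an isometry over $\Gamma$, hence matches connected components; combined with Proposition~\ref{prop_subbundlesplit} this shows every subbundle of $\bigoplus_i E_i$ is isomorphic to $\bigoplus_{i\in S}E_i$ for some $S$. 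The paper instead does only the two-summand slope computation and then disposes of part (ii) with the single phrase ``follows by induction over the number of indecomposable summands''; as stated, that induction does not obviously close the ``if'' direction of (ii), since splitting off a subbundle $E'$ gives a \emph{second} decomposition of $E$ into indecomposables, and one still needs to know its summands have the same slopes as the $E_i$ --- which is precisely your uniqueness lemma (or, equivalently, the observation that subbundles correspond to unions of connected components of the covering in the multidivisor picture). So your route is slightly longer but fills in the step the paper compresses; the extra cost is the Krull--Schmidt lemma, and the payoff is a complete, checkable argument for (ii). Two minor remarks: in the converse of (i) you could appeal to Proposition~\ref{prop_subbundlesplit} directly rather than routing through the structural statement with $r=1$, and the quantifier ``for all subbundles'' in the definition of stability presumably intends \emph{proper nonzero} subbundles, which you implicitly and correctly assume.
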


\begin{proof}
Let $E=E_1\oplus E_2$ for two vector bundles $E_1$ and $E_2$ on $\Gamma$ of ranks $n_1$ and $n_2$. Assume that $\mu(E_1)\leq \mu(E_2)$. Then we have 
\begin{equation*}
    \mu(E)=\frac{\deg(E_1)+\deg(E_2)}{n_1+n_2}=\frac{n_1}{n_1+n_2}\mu(E_1)+\frac{n_2}{n_1+n_2}\mu(E_2) \ .
\end{equation*}
This implies that
\begin{equation*}
    \mu(E_1)\leq \mu(E)\leq \mu(E_2) \ ,
\end{equation*}
and both inequalities are strict unless $\mu(E_1)=\mu(E_2)$. 
Since every sub-bundle splits by Proposition~\ref{prop_subbundlesplit} above this immediately implies Part (i). Part (ii) follows by induction over the number of indecomposable summands.
\end{proof}

\begin{example} \label{ex:g=1ss} Let $M_{n,d}(\Gamma)$ denote the locus of semistable bundles in $\Bun_n(\Gamma)$ of degree $d$. By Proposition \ref{prop_multidivisors}, a component of $M_{n,d}(\Gamma)$ parametrizes vector bundles associated to a disjoint union of connected multidivisors $(f_i,D_i)$ such that $\mu\big(\calH(f_i,D_i)\big)=\mu\big(\calH(f_j,D_j)\big)$ for all $i$ and $j$. Unlike its algebraic counterpart, the tropical moduli space $M_{n,d}(\Gamma)$ is, in general, not connected. 

Now suppose that $\Gamma$ is a circle, and denote $h=(n,d)$. There is a main component $M_{n,d}^\oplus(\Gamma)$ that parametrizes those semistable bundles of rank $n$ and degree $d$ that are direct sums of $h$ stable bundles (all of which must automatically be of rank $n/h$ and degree $d/h$ by Proposition~\ref{prop_stability} above). Combining Sections~\ref{section_tropicalmoduli} and~\ref{section_Atiyah}, we find that this component may be identified with the symmetric product $\Sym^h\Gamma$.

\end{example}


Let $\Ga$ be a metric graph. The sheaf $S_n\ltimes \mc H_{\Ga}^n$ has a natural subsheaf $S_n\ltimes \RR^n$, where $\RR\subset \mc H_{\Ga}$ is the sheaf of constant functions. We view sections of $S_n\ltimes \mc H_{\Ga}^n$ as $\GL_n(\TT)$-valued functions on $\Ga$. We now show that sections of $S_n\ltimes \RR^n$ can be viewed as functions valued in a tropical \emph{unitary} group, by proving an analogue of the Narasimhan--Seshadri correspondence. Indeed, consider a local system $\lambda$ on $\Gamma$ with fiber $S_n\ltimes\R^n$. The $S_n$-factor of $\lambda$ corresponds to a free degree $n$ cover $f:\tGa\to \Ga$, and there exists a local system $\widetilde{\lambda}$ on $\tGa$ with fiber $\R$ such that $f_\ast \widetilde{\lambda}=\lambda$. At the same time, $\lambda$ also naturally defines a rank-$n$ vector bundle $E(\lambda)$ on $\Ga$ with constant transition functions.  

\begin{theorem}[Narasimhan--Seshadri correspondence]\label{thm_NarasimhanSeshadri}
Let $\Gamma$ be a compact and connected metric graph. 
\begin{enumerate}[(i)]
    \item  A vector bundle $E$ of rank $n$ on $\Gamma$ is associated to an $S_n\ltimes\R^n$-local system $\lambda$ if and only if it is semistable and of degree zero. The vector bundle $E(\lambda)$ is stable if and only if the corresponding $S_n$-representation of $\pi_1(\Ga)$ is indecomposable.
    \item Two $S_n\ltimes\R^n$-local systems $\lambda_i$ (for $i=1,2$) give rise to the same vector bundle if and only if they define the same cover $f\colon\widetilde \Gamma\rightarrow\Gamma$ and the induced classes of the $\widetilde{\lambda}_i$ in $\Jac(\widetilde{\Gamma})$ are equal.
\end{enumerate}

\end{theorem}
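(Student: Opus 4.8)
The plan is to reduce the entire statement to the multidivisor dictionary of Proposition~\ref{prop_multidivisors} together with the long exact sequence~\eqref{eq:les}. The first step is to record the local-system analogue of Proposition~\ref{prop_multidivisors}: by exactly the same argument as in its proof, an $S_n\ltimes\R^n$-local system $\lambda$ on $\Ga$ amounts to a pair $(f\colon\tGa\to\Ga,\widetilde\lambda)$ consisting of a degree-$n$ free cover and an $\R$-local system $\widetilde\lambda$ on $\tGa$ (this is the decomposition already used in the excerpt, with $f_\ast\widetilde\lambda=\lambda$). Moreover this dictionary is compatible with the one for vector bundles: since the inclusion of constant into harmonic functions is a morphism of sheaves $\R\hookrightarrow\calH_{\tGa}$ that is respected by pushforward along $f$, one obtains $E(\lambda)\simeq\calH(f,D)$, where $D$ is any divisor on $\tGa$ representing the image of the class of $\widetilde\lambda$ under the map $H^1(\tGa,\R)\to\Pic(\tGa)$ induced by $\R\hookrightarrow\calH_{\tGa}$. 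Applying~\eqref{eq:les} to each connected component of $\tGa$, this map has image exactly $\Pic_0(\tGa)$ (degree zero on every component); this is the only ingredient that is not pure bookkeeping.

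With this in hand, part~(i) follows quickly. If $E=E(\lambda)$ corresponds to $(f,\widetilde\lambda)$, decompose $\tGa=\tGa_1\sqcup\dots\sqcup\tGa_k$ into connected components, let $f_i\colon\tGa_i\to\Ga$ be the induced connected covers and $\widetilde\lambda_i$ the restrictions of $\widetilde\lambda$. By part~\ref{item:multi1} of Proposition~\ref{prop_multidivisors} we have $E\simeq\bigoplus_i\calH(f_i,D_i)$; each summand is indecomposable because $f_i$ is connected, and $\deg\calH(f_i,D_i)=\deg D_i=0$ since $[D_i]\in\Pic_0(\tGa_i)$. Hence all summands have slope $0$, so $\deg E=0$ and $E$ is semistable by Proposition~\ref{prop_stability}(ii). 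Conversely, if $E$ is semistable of degree zero, write $E\simeq\bigoplus E_i$ with the $E_i$ indecomposable; then $\mu(E_i)=\mu(E)=0$, so $E_i\simeq\calH(f_i,D_i)$ with $f_i$ a connected cover and $\deg D_i=0$, i.e.\ $[D_i]\in\Pic_0(\tGa_i)$. By the surjectivity just noted, each $[D_i]$ is the class of the line bundle associated to some $\R$-local system $\widetilde\lambda_i$ on $\tGa_i$; assembling the $f_i$ into a degree-$n$ cover $f$ and the $\widetilde\lambda_i$ into an $\R$-local system $\widetilde\lambda$ on $\tGa$ produces a local system $\lambda$ with $E(\lambda)\simeq E$. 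Finally, $E(\lambda)$ is stable $\iff$ it is indecomposable (Proposition~\ref{prop_stability}(i)) $\iff$ the cover $f$ is connected (Proposition~\ref{prop_multidivisors}) $\iff$ the action of $\pi_1(\Ga)$ on the fiber of $f$ is transitive, which is exactly the statement that the associated $S_n$-representation is indecomposable.

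For part~(ii), recall that $E(\lambda_i)\simeq\calH(f_i,D_i)$ where $D_i$ is a divisor on $\tGa_i$ representing the class of $\widetilde\lambda_i$ in $\Pic(\tGa_i)$ (of degree zero on each component). By Proposition~\ref{prop_multidivisors} (equivalently Corollary~\ref{cor_modulicomponents}), $\calH(f_1,D_1)\simeq\calH(f_2,D_2)$ if and only if $f_1$ and $f_2$ are isomorphic as covers of $\Ga$ --- this is the requirement that $\lambda_1$ and $\lambda_2$ ``define the same cover $f$'' --- and, after choosing an identification $\tGa_1\simeq\tGa_2=:\tGa$ realizing this, one has $D_1-D_2\in\PDiv(\tGa)$, i.e.\ the induced classes of $\widetilde\lambda_1$ and $\widetilde\lambda_2$ agree in $\Jac(\tGa)=\Pic_0(\tGa)$. (The $\Aut(f)$-indeterminacy in Corollary~\ref{cor_modulicomponents} is precisely the freedom in this choice of identification; if one does not fix one, the two classes in $\Jac(\tGa)$ agree only modulo $\Aut(f)$.) This is exactly the asserted criterion.

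The proof has no single hard step; the point to get right is the parallel dictionary for $S_n\ltimes\R^n$-local systems and its compatibility with $E(\cdot)$ under $\R\hookrightarrow\calH_\Ga$, together with the careful reading of ``define the same cover'' in~(ii). The only genuinely analytic input --- that an $\R$-local system on a connected metric graph has associated line bundle of degree zero, and that conversely every degree-zero line bundle arises in this way --- is already encoded in the exact sequence~\eqref{eq:les}.
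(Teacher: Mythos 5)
Your proof is correct and follows essentially the same route as the paper's: reduce to the rank-one case via the cover $f$, use the long exact sequence~\eqref{eq:les} to identify degree-zero line bundles with classes of $\R$-local systems, and invoke Propositions~\ref{prop_multidivisors} and~\ref{prop_stability} for the semistability and stability criteria. If anything, you are more explicit than the paper about part~(ii) in rank $n>1$ (the paper only spells out the $n=1$ case) and about the $\Aut(f)$-indeterminacy in the phrase ``define the same cover,'' which is a worthwhile clarification.
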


\begin{proof} We first consider the case $n=1$, where we need to show that any line bundle of degree zero (which is trivially semistable) is associated to an $\R$-local system, corresponding to an element $H^1(\Gamma,\R)$. Consider the long exact sequence~\eqref{eq:les}, written in \v{C}ech cohomology with respect to an appropriate cover. The set of isomorphism classes of line bundles is $\Pic(\Ga)=\check{H}^1(\Ga,\mc H_{\Ga})$. The subset of degree zero line bundles is the image of $\check{H}^1(\Ga,\RR)$. Hence any line bundle of degree zero can be represented by an $\RR$-valued cocycle. Two $\R$-local systems $\lambda_i$ (for $i=1,2$) give rise to the same line bundle if and only if their classes in the quotient $\Jac(\Ga)=\check{H}^1(\Ga,\RR)/\Omega_{\Ga}(\Ga)$ agree.

We now consider the general case. By Proposition~\ref{prop_stability}, a vector bundle $E$ of degree (and hence slope) zero is semistable if and only if it is a direct sum of stable vector bundles of degree zero. By Proposition~\ref{prop_multidivisors}, this is equivalent to saying that $E$ is the pushforward $f_*L$, along a degree $n$ free cover $f:\tGa\to \Ga$, of a line bundle $L$ having degree zero on each connected component of $\tGa$. Hence if $E$ is semistable, then we can choose the transition functions of $L$ on each connected component to be constant, and then the transition functions of $E$ are also constant by the definition of pushforward. Conversely, if $E$ is associated to a $S_n\ltimes\R^n$-local system, then the transition functions of $L$ can be chosen to be constant, so $L$ has degree zero on each connected component and therefore $E$ is semistable. 

By Proposition \ref{prop_stability} the vector bundle $E(\lambda)$ is stable if and only if $\widetilde{\Gamma}$ is connected, which is the case if and only if the representation $\pi_1(\Gamma)\rightarrow S_n$ giving rise to $f\colon\widetilde{\Gamma}\rightarrow \Gamma$ is indecomposable.
\end{proof}

\begin{remark}
We point out that every representation $\pi_1(\Gamma)\rightarrow S_n\ltimes \R^n$ is already a direct sum of irreducible representations. This is why we do not have to require the representation in Theorem~\ref{thm_NarasimhanSeshadri} to be ``semisimple'' in any suitable sense. 
\end{remark}



\section{The process of tropicalization}\label{section_tropicalization}

In this section and the next, we work over an algebraically closed field $K$ that is complete with respect to a non-trivial non-Archimedean absolute value $\vert.\vert$. Write $\val$ for the associated valuation on $K$, as well as $R$ for the valuation ring of $K$ and $k$ for its (automatically algebraically closed) residue field.


\subsection{Freely $S_n\ltimes\G_m^n$-linearized vector bundles}\label{section_linearizedvectorbundles}

Let $X$ be a \emph{Mumford curve}, i.e.\ a (connected) smooth and projective curve over $K$ that admits a prestable reduction $\calX$ whose special fiber is a nodal curve, all of whose components normalize to $\PP^1$. A vector bundle $E$ on $X$ is said to be $S_n\ltimes \G_m^n$-\emph{linearized} if the structure group of $E$ is $S_n\ltimes\G_m^n$; this means that we may trivialize $E$ on an open cover $U_i$ of $X$ such that the transition maps, which a priori live in $\GL_n\big(\calO(U_i\cap U_j)\big)$, may be chosen to land in $S_n\ltimes\G_m^n\subseteq \GL_n$. 

A vector bundle is $S_n\ltimes \G_m^n$-linearized if and only if there is an \'etale degree $n$ cover $f\colon \widetilde{X}\rightarrow X$ (with $\widetilde{X}$ possibly disconnected) and a line bundle $L$ on $\widetilde{X}$ such that $f_\ast L\simeq E$. We say that $E$ is \emph{freely  $S_n\ltimes\G_m^n$-linearized} if the cover $f\colon\widetilde{X}\rightarrow X$ is induced from an \'etale  cover $\widetilde{\calX}\rightarrow \calX$ of the prestable model $\calX$ of $X$ (and not from an admissible cover in the sense of~\cite{HarrisMumford}). Note that this already implies that $\widetilde{X}$ is also a Mumford curve. Being freely linearized is equivalent to the condition that the induced map on the dual metric graphs (respectively on the skeletons) is a free cover (in general, this map is a harmonic morphism with nontrivial dilation). Equivalently, we may require the induced map $f^{\an}\colon \widetilde{X}^{\an}\rightarrow X^{\an}$ of Berkovich analytic spaces to be a topological covering space. Hence this condition is also independent of the choice of the particular prestable model $\calX$ of $X$. 

Write $\Bun_{S_n\ltimes\G_m^n}(X)$ for the moduli stack of vector bundles of rank $n$, which are $S_n\ltimes\G_m^n$-linearized. Arguing as in Corollary~\ref{cor_modulicomponents}, we find:

\begin{lemma}
The moduli stack $\Bun_{S_n\ltimes\G_m^n}(X)$ is given by the disjoint union
\begin{equation*}
    \Bun_{S_n\ltimes \G_m^n}(X)=\bigsqcup_{f\colon \widetilde{X}\rightarrow X} \big[\Bun_{\G_m}(\widetilde{X})\big/\Aut(f)\big]
\end{equation*}
taken over isomorphism classes of \'etale covers $f\colon\widetilde{X}\rightarrow X$ of degree $n$, where $\Aut(f)$ is the group of deck transformations of $f$ acting on $\Bun_{\G_m}(\widetilde{X})$ via pullback. 
\end{lemma}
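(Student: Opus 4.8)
The statement is the ``stacky'' analogue of Corollary~\ref{cor_modulicomponents}, and the plan is to mimic that argument via the structure theory of $S_n\ltimes\G_m^n$-linearized bundles recalled just above. First I would recall the equivalence: a vector bundle $E$ on $X$ is $S_n\ltimes\G_m^n$-linearized if and only if there is an \'etale degree $n$ cover $f\colon\widetilde X\to X$ (with $\widetilde X$ possibly disconnected) and a line bundle $L$ on $\widetilde X$ with $f_\ast L\simeq E$. This is precisely the algebraic counterpart of Proposition~\ref{prop_multidivisors}, and it is essentially the observation that the structure group $S_n\ltimes\G_m^n$ is the wreath product $\G_m\wr S_n$, so that an $S_n\ltimes\G_m^n$-torsor is the same datum as an $S_n$-torsor (equivalently, an \'etale degree $n$ cover $\widetilde X\to X$) together with a $\G_m$-torsor on it (equivalently, a line bundle on $\widetilde X$), i.e.\ an ``induced'' or ``Weil restriction'' type construction $f_\ast$.

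Next I would make this into a morphism of stacks. Over the (fixed) base, the moduli stack $\Bun_{S_n\ltimes\G_m^n}(X)$ splits as a disjoint union indexed by the discrete invariant of the underlying $S_n$-torsor, namely the isomorphism class of the \'etale cover $f\colon\widetilde X\to X$ of degree $n$ (there are finitely many such, since $\pi_1^{\et}$ of a curve is topologically finitely generated and we only look at degree $\le n$ quotients). For a fixed $f$, the construction $L\mapsto f_\ast L$ defines a morphism $\Bun_{\G_m}(\widetilde X)\to \Bun_{S_n\ltimes\G_m^n}(X)$ landing in the corresponding component. I would then observe that the group $\Aut(f)$ of deck transformations acts on $\Bun_{\G_m}(\widetilde X)$ by pullback, that $f_\ast$ is invariant under this action (since $f\circ\phi=f$ for $\phi\in\Aut(f)$ forces $f_\ast\phi^\ast L\simeq f_\ast L$), and hence $f_\ast$ descends to a morphism from the quotient stack $[\Bun_{\G_m}(\widetilde X)/\Aut(f)]$ to the $f$-component of $\Bun_{S_n\ltimes\G_m^n}(X)$. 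Assembling these over all $f$ gives a morphism from the right-hand side to $\Bun_{S_n\ltimes\G_m^n}(X)$.

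To see this is an isomorphism of stacks I would argue it is fully faithful and essentially surjective, compatibly with the discrete decomposition. Essential surjectivity (on objects over any test scheme $T$) is exactly the equivalence recalled above, applied in families over $T$: every $S_n\ltimes\G_m^n$-linearized bundle on $X\times T$ is $f_\ast L$ for some family of \'etale covers and some line bundle, and the family of covers is locally on $T$ pulled back from one of the finitely many covers over the geometric fibre, so we land in the appropriate component. For full faithfulness, I would unwind the $2$-categorical fibre: an isomorphism $f_\ast L\xrightarrow{\sim} f'_\ast L'$ of $S_n\ltimes\G_m^n$-linearized bundles must carry the $S_n$-part to the $S_n$-part, hence induces an isomorphism $\phi\colon\widetilde X\xrightarrow{\sim}\widetilde X'$ over $X$ (so $f,f'$ are in the same isomorphism class, say both equal to $f$ after identification, and $\phi\in\Aut(f)$) together with an isomorphism $\phi^\ast L'\xrightarrow{\sim} L$ of line bundles; this is precisely a morphism in the quotient stack $[\Bun_{\G_m}(\widetilde X)/\Aut(f)]$. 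Conversely any such descends, and the two assignments are mutually inverse. The main technical point to be careful about — and the step I expect to require the most care — is the first one, i.e.\ establishing the equivalence ``$S_n\ltimes\G_m^n$-linearized bundle $\leftrightarrow$ (\'etale degree $n$ cover, line bundle on it)'' \emph{functorially in families}, together with the finiteness and local-constancy of the discrete cover invariant over the base, so that the disjoint union is genuinely a disjoint union of open-and-closed substacks; everything else is a routine bookkeeping of the wreath-product formalism parallel to Corollary~\ref{cor_modulicomponents}.
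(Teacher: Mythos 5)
Your proposal is correct and follows essentially the same route as the paper, which simply says ``arguing as in Corollary~\ref{cor_modulicomponents}'' --- i.e.\ it rests on the same equivalence between $S_n\ltimes\G_m^n$-linearized bundles and pairs (\'etale degree~$n$ cover, line bundle on it), followed by sorting into components by the cover and quotienting by deck transformations. The extra care you take with the stacky bookkeeping (families over a test scheme, full faithfulness, local constancy of the cover invariant) is exactly the content the paper leaves implicit.
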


For fixed $f\colon\widetilde{X}\rightarrow X$ we write 
\begin{equation*}
    \widetilde{X}=\widetilde{X}_1\sqcup \cdots \sqcup \widetilde{X}_k
\end{equation*} for the decomposition into connected components. Then we have
\begin{equation*}
    \Bun_{\G_m}(X)= \Bun_{\G_m}(\widetilde{X}_1)\times \cdots \times \Bun_{\G_m}(\widetilde{X}_k) \ .
\end{equation*}
Rigidifying by $\G_m$-automorphism groups, we find \begin{equation*}
\big[\Bun_{\G_m}(\widetilde{X}_i)\fatslash\ \mathbf{B}\G_m\big]\simeq \Pic(\widetilde{X}_i)
\end{equation*}
and every $\Pic_d(\widetilde{X}_i)$ is naturally a torsor over the Jacobian $\Pic_0(\widetilde{X}_i)\simeq\Jac(\widetilde{X}_i)$.

The stack $\Bun_{S_n\ltimes \G_m^n}(X)$ is a smooth and proper Artin stack that arises as a disjoint union of finite group quotients of the $\Bun_{\G_m}(\widetilde{X}_i)$. The subspace $\Bun^{\free}_{S_n\ltimes\G_m^n}(X)$ of freely $S_n\ltimes \G_m^n$-linearized vector bundles consists precisely of those components, for which the cover $f\colon \widetilde{X}\rightarrow X$ is induces a topological covering space $f^{\an}\colon \widetilde{X}^{\an}\rightarrow X^{\an}$. 

\subsection{Tropicalization}
Let $X$ be a Mumford curve and write $\Gamma_X$ for a \emph{non-Archimedean skeleton} of the Berkovich analytic space $X^{\an}$ that is associated to a prestable model $\calX$ of $X$ over $R$ (e.g.\ the minimal semistable model when $g(X)\geq 2$). Write $\Bun_{S_n\ltimes \G_m^n}^{\free}(X)^{\an}$ for the Berkovich analytification of $\Bun_{S_n\ltimes \G_m^n}^{\free}(X)$ and, implicitly, also for its underlying topological space (see~\cite[Section 3]{Ulirsch_tropisquot} for details). 

Then there is a natural \emph{tropicalization map}
\begin{equation*}
    \trop\colon \Bun_{S_n\ltimes \G_m^n}^{\free}(X)^{\an}\longrightarrow \Bun_n(\Gamma_X)
\end{equation*}
given by the following: A point in $\Bun_{S_n\ltimes \G_m^n}^{\free}(X)^{\an}$ may be represented by an $K'$-valued point of $\Bun_{S_n\ltimes \G_m^n}^{\free}(X)$ for a non-Archimedean extension $K'$ of $K$. This datum corresponds to a freely $S_n\ltimes\G_m^n$-linearized vector bundle $E$ on the base change $X_{K'}$ of $X$ to $K'$. Let $f\colon\widetilde{X}\rightarrow X_{K'}$ be an \'etale cover of degree $n$ and $L$ a line bundle on $\widetilde{X}$ such that $f_\ast L=E$. The cover $f$ is the generic fiber of a degree $n$ \'etale cover $\widetilde{\calX}\rightarrow \calX$ of prestable models over $S$, where $S$ is the valuation ring of $K'$; thus the induced map $f^{\trop}\colon\Gamma_{\widetilde{X}}\rightarrow \Gamma_X$ is a free cover of degree $n$. Then we set
\begin{equation*}
    \trop(E)=\big(f^{\trop},\trop(L)\big)
\end{equation*}
where $\trop(L)$ is the usual tropicalization of a line bundle $L$ on $\widetilde{X}_L$ thought of as a $K'$-valued point of $\Pic(X)^{\an}$. 

To describe $\trop(L)$ we expand on~\cite[Section 6.3]{BakerJensen}: We may assume that $K'$ is algebraically closed. Write $L=\calO_{X_{K'}}(D)$ for a split divisor $$D=\sum_{p\in X(K')}D(p)\cdot p$$ on $\widetilde{X}_{K'}$. Then $\trop(L)$ is defined to be the linear equivalence class of the pointwise pushforward of $D$ to the skeleton $\Gamma_{\widetilde{X}}=\Gamma_{\widetilde{X}_{K'}}$ of $\widetilde{X}^{\an}$. This is well-defined thanks to the slope formula from~\cite{BPR} (also see~\cite{BPR_survey}).

There is a second way to associate to $\Bun_{S_n\ltimes \G_m^n}^{\free}(X)$ a disjoint union of real torus torsors. For this note first that the underlying topological space of $\Bun_{S_n\ltimes \G_m^n}^{\free}(X)^{\an}$ is naturally homeomorphic to the underlying topological space of 
\begin{equation}\label{eq_Picquot}
    \bigsqcup_{f\colon \widetilde{X}\rightarrow X} \big[\Pic(\widetilde{X})\big/\Aut(f)\big]^{\an} \ ,
\end{equation} 
since rigidifying by $\mathbf{B}\G_m^{\an}$ does not change the points of a non-Archimedean stack. Every component in \eqref{eq_Picquot} is a finite quotient of torsors over an abelian variety and, using Raynaud's uniformization, one can show that the Berkovich analytification of each component admits a strong deformation retraction onto a non-Archimedean skeleton that is a torsor over a real torus (see~\cite[Section 6.5]{Berkovich_book} and~\cite[Section 4]{BakerRabinoff}). We write
\begin{equation*}
    \rho\colon \Bun_{S_n\ltimes\G_m^n}^{\free}(X)^{\an}\longrightarrow \Sigma\big(\Bun_{S_n\ltimes \G_m^n}^{\free}(X)\big)
\end{equation*}
for the disjoint union of these retractions. 

The tropicalization map agrees with $\rho$ in the following sense:

\begin{theorem}\label{thm_Buntrop=skel}
There is a natural isomorphism $J\colon \Bun_n(\Gamma_X)\xrightarrow{\sim} \Sigma\big(\Bun_{S_n\ltimes \G_m^n}^{\free}(X)\big)$ that makes the diagram 
\begin{equation*}
    \begin{tikzcd}
        & \Bun_{S_n\ltimes \G_m^n}^{\free}(X)^{\an} \arrow[ld,"\trop"'] \arrow[rd,"\rho"] &\\ 
        \Bun_n(\Gamma_X) \arrow[rr,"J"',"\sim"] & & \Sigma\big(\Bun_{S_n\ltimes \G_m^n}^{\free}(X)\big)
    \end{tikzcd}
\end{equation*}
commute. 
\end{theorem}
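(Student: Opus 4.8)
The plan is to reduce everything to the rank-one case, where the corresponding statement is essentially the compatibility of tropicalization of line bundles with Raynaud uniformization, as established in~\cite{BakerRabinoff}. First I would recall from Corollary~\ref{cor_modulicomponents} and the discussion following the lemma in Section~\ref{section_linearizedvectorbundles} that both $\Bun_n(\Gamma_X)$ and $\Bun_{S_n\ltimes\G_m^n}^{\free}(X)$ decompose as disjoint unions indexed by isomorphism classes of degree-$n$ covers: tropically over free covers $f^{\trop}\colon\widetilde\Gamma\to\Gamma_X$, and algebraically over \'etale covers $f\colon\widetilde X\to X$ inducing topological covers of Berkovich spaces. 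The key bookkeeping point is that these two index sets are canonically identified: an \'etale cover $\widetilde X\to X$ inducing a topological covering on analytifications corresponds to a homomorphism $\pi_1(X^{\an})\to S_n$, hence to a homomorphism $\pi_1(\Gamma_X)\to S_n$ (since $\Gamma_X\hookrightarrow X^{\an}$ is a homotopy equivalence), which is exactly the data of a free cover $\widetilde\Gamma\to\Gamma_X$; and under this correspondence $\Gamma_{\widetilde X}\cong\widetilde\Gamma$ compatibly with the deck group actions of $\Aut(f)$. This reduces the theorem to showing, for each such cover and each connected component $\widetilde X_i$ of $\widetilde X$, that the diagram commutes after replacing $\Bun_n$ by $\Pic(\widetilde X_i)$, its tropicalization by $\Pic(\Gamma_{\widetilde X_i})=\Jac(\Gamma_{\widetilde X_i})$-torsor, and $\Sigma$ by the non-Archimedean skeleton of $\Pic(\widetilde X_i)^{\an}$ — and then checking that all constructions are equivariant for the finite group $\Aut(f)$, so that the quotients glue.

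Next I would carry out the rank-one comparison. For a Mumford curve $Y$, Raynaud uniformization presents $\Pic_0(Y)^{\an}$ as a quotient of an analytic torus, and by~\cite[Section 4]{BakerRabinoff} (together with the slope formula of~\cite{BPR}) the retraction $\rho$ onto the skeleton $\Sigma(\Pic_0(Y))$ is identified with the tropicalization map $\Pic_0(Y)^{\an}\to\Jac(\Gamma_Y)=\Pic_0(\Gamma_Y)$ that sends $\calO_Y(D)$ to the pushforward of $D$ to $\Gamma_Y$ modulo principal divisors; the isomorphism $J$ on this component is precisely the one furnished by Baker--Rabinoff, extended to $\Pic_d$ by translating by any degree-$d$ point (which is compatible because both sides are torsors over the degree-$0$ parts and the construction is natural in $D$). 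I would then verify that the tropicalization map $\trop$ of Section~\ref{section_tropicalization}, which by construction takes a freely linearized $E=f_\ast L$ to $(f^{\trop},\trop(L))$, is computed component-by-component exactly by this rank-one tropicalization applied to $L$ on the relevant $\widetilde X_i$; this is immediate from the definition since $\trop(L)$ was defined as the pushforward of a split divisor to $\Gamma_{\widetilde X}$. Finally, I would check $\Aut(f)$-equivariance: deck transformations act on $\widetilde X$, hence on $\widetilde X^{\an}$, $\Gamma_{\widetilde X}$, $\Pic(\widetilde X)$, and $\Sigma(\Pic(\widetilde X))$ compatibly, because $\rho$ is canonical and the skeleton is functorial for automorphisms of the model (one may choose an $\Aut(f)$-equivariant prestable model $\widetilde\calX$); so the isomorphisms $J$ on components descend to the quotients and assemble to the desired global $J$.

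The main obstacle I anticipate is the careful matching of index sets together with the equivariance: one must argue that every free cover of the skeleton $\Gamma_X$ lifts to a unique (up to isomorphism) \'etale cover of $X$ that is freely linearized, i.e.\ induced from an \'etale cover of the prestable model rather than merely an admissible cover, and that this lift is compatible with deck groups and with the formation of skeletons. Concretely this requires knowing that $\pi_1^{\et}(X)\to\pi_1(X^{\an})\xrightarrow{\sim}\pi_1(\Gamma_X)$ behaves well — which is part of the point of restricting to Mumford curves and to the \emph{free} (dilation-free) locus — and that one can choose compatible prestable models for $X$ and $\widetilde X$ so that the skeleton of $\widetilde X$ is literally the pulled-back free cover of the skeleton of $X$. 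Once this dictionary is set up cleanly, the commutativity of the diagram is a formal consequence of the rank-one statement of~\cite{BakerRabinoff} applied on each component and the naturality of all maps involved; the remaining verifications (compatibility of $\trop(L)$ with $\rho$ on $\Pic$, and descent through the finite quotient) are routine.
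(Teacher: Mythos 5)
Your proposal is correct and follows essentially the same route as the paper: reduce to the rank-one case via the decomposition of both moduli spaces into components indexed by degree-$n$ covers (matched using the correspondence between \'etale covers of the prestable model and free covers of the skeleton), invoke the Baker--Rabinoff identification of tropicalization with the retraction to the skeleton for $\Pic_0$, extend to $\Pic_d$ by translating by a base point, and descend through the $\Aut(f)$-quotients by equivariance. The only cosmetic difference is the order of presentation (the paper does $n=1$ first and then passes to general $n$), so no further comparison is needed.
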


Theorem~\ref{thm_Buntrop=skel}, in particular, tells us that the tropicalization map is well-defined, continuous, proper, and surjective, since these are well-known properties of the retraction map to the skeleton. 

\begin{proof}[Proof of Theorem~\ref{thm_Buntrop=skel}]
The case $n=1$ is an immediate consequence of the main result of~\cite{BakerRabinoff}.
We have $\Pic(X)=\bigsqcup_{d\in\Z}\Pic_d(X)$ as well as $\Pic(\Gamma_X)=\bigsqcup_{d\in\Z}\Pic_d(\Gamma_X)$ and, since tropicalization preserves degree, we may consider each component separately.

Choose a base point $p\in X(K)$ and write $\alpha\colon X\rightarrow\Pic_0(X)\simeq\Jac(X)$ as well as $\alpha^{\trop}\colon \Gamma_X\rightarrow \Jac(\Gamma_X)$ for the associated Abel--Jacobi maps, both given by $q\mapsto [q-p]$. By~\cite{BakerRabinoff} we have a natural isomorphism $\Jac(\Gamma_X)\xrightarrow{\sim} \Sigma(\Jac(X))$ between the Jacobian $\Jac(\Gamma_X)$ and the non-Archimedean skeleton of $\Jac(X)^{\an}$ that makes the diagram 
\begin{equation*}
    \begin{tikzcd}
        X^{\an}\arrow[d,"\rho'"]\arrow[rr,"\alpha^{\an}"] &&\Jac(X)^{\an}\arrow[d,"\rho"]\\
        \Gamma_X\arrow[r,"\alpha^{\trop}"]&\Jac(\Gamma_X)\arrow[r,"\sim"]& \Sigma(\Jac(X))
    \end{tikzcd}    
\end{equation*}
commute. This immediately implies the commutativity of
\begin{equation*}
    \begin{tikzcd}
        \Pic_0(X)^{\an}\arrow[rr,"\sim"]\arrow[d,"\trop"']&&\Jac(X)^{\an}\arrow[d,"\rho"]\\
        \Pic_0(\Gamma_X)\arrow[r,"\sim"]&\Jac(\Gamma_X)\arrow[r,"\sim"]&\Sigma(\Jac(X))
    \end{tikzcd}
\end{equation*}
and thus the claim for the component $\Pic_0(X)$.

Using the base point $p\in X(K)$ and its image $q=\rho(p)\in\Gamma_X$, we consider the isomorphisms 
\begin{equation*}\begin{split}
 \Pic_0(X)&\xlongrightarrow{\sim}\Pic_d(X)\\
[D]&\longmapsto [D+dp]
\end{split}\end{equation*}
and 
\begin{equation*}\begin{split}
 \Pic_0(\Gamma_X)&\xlongrightarrow{\sim}\Pic_d(\Gamma_X)\\
[D]&\longmapsto [D+dq] \ .
\end{split}\end{equation*}
We obtain an isomorphism $J_d\colon \Pic_d(\Gamma)\xrightarrow{\sim}\Sigma(\Pic_d(X))$ that makes the diagram
\begin{equation*}
    \begin{tikzcd}
        \Pic_d(X)^{\an}\arrow[rr,"\sim"]\arrow[d,"\trop"']&&\Jac(X)^{\an}\arrow[d,"\rho"]\\
        \Pic_d(\Gamma_X)\arrow[rr,"\sim"]&&\Sigma\big(\Pic_d(X)\big)
    \end{tikzcd}
\end{equation*}
commute. This vertical arrows here do not depend on the choice of the base point $p$. Since 
\begin{equation*}
\big[\Bun_{\G_m}(X)\fatslash\  \mathbf{B}\G_m\big]\simeq \Pic(X)
\end{equation*} 
this is our claim for $n=1$. 

The general case $n\geq 1$ follows from the fact that there is a natural one-to-one correspondence between \'etale covers of $\calX$ and free covers of $\Gamma_X$ and that pullback along automorphisms commutes with the formation of skeleton and tropicalization. 
\end{proof}

\begin{remark}
Any free cover  $\widetilde{\Gamma}\rightarrow\Gamma_X$ of finite degree naturally arises as the skeleton of an algebraic cover that is induced by an \'etale cover on the level of prestable models, since the local Hurwitz numbers are all equal to one. See~\cite{ABBRI, ABBRII, CavalieriMarkwigRanganathan_tropadmissiblecovers} for background on this.
\end{remark}

\begin{proposition}\label{prop_tropbasicoperations}
Tropicalization is naturally compatible with direct sums, tensor products, dualization, determinants, and degrees of vector bundles.
\end{proposition}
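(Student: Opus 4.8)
The plan is to derive all five compatibilities from the dictionary between vector bundles and multidivisors established in Proposition~\ref{prop_multidivisors}. Recall that a freely $S_n\ltimes\G_m^n$-linearized vector bundle has the form $E=f_\ast L$ for a finite \'etale cover $f\colon\widetilde X\to X$ induced by an \'etale cover $\widetilde{\calX}\to\calX$ of prestable models and a line bundle $L=\calO_{\widetilde X}(D)$ on $\widetilde X$, and that by construction $\trop(E)=(f^{\trop},D^{\trop})$, where $f^{\trop}\colon\Gamma_{\widetilde X}\to\Gamma_X$ is the induced free cover and $D^{\trop}$ is the pushforward of a split representative of $D$ to $\Gamma_{\widetilde X}$. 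So the first step is to rewrite each of $E_1\oplus E_2$, $E_1\otimes E_2$, $E^\ast$ and $\det E$ again in the shape $f'_\ast L'$; the second step is to show that forming $f^{\trop}$ and $D^{\trop}$ intertwines these algebraic operations with the tropical operations on multidivisors from Proposition~\ref{prop_multidivisors}. (All these operations preserve free linearization: the relevant covers $f_1\sqcup f_2$, $f_1\times_X f_2$, $f$ itself, and the trivial cover are again induced from \'etale covers of $\calX$, so both sides of each asserted compatibility are defined.)

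For the first step I would invoke the standard behaviour of pushforward along a finite \'etale morphism. The identity $f_{1\ast}L_1\oplus f_{2\ast}L_2\cong(f_1\sqcup f_2)_\ast(L_1\oplus L_2)$ is tautological. The projection formula combined with finite flat base change gives $f_{1\ast}L_1\otimes f_{2\ast}L_2\cong(f_1\times_X f_2)_\ast\big(p_1^\ast L_1\otimes p_2^\ast L_2\big)$, where $p_i\colon\widetilde X_1\times_X\widetilde X_2\to\widetilde X_i$ are the two projections. Grothendieck duality for the finite \'etale morphism $f$, for which $f^{!}\calO_X=\calO_{\widetilde X}$, yields $(f_\ast L)^\ast\cong f_\ast(L^\ast)$. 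A short induction on $D$, pushing forward the exact sequences $0\to\calO_{\widetilde X}(D-\widetilde p)\to\calO_{\widetilde X}(D)\to\calO_{\widetilde p}\to 0$ and taking determinants, gives $\det f_\ast\calO_{\widetilde X}(D)\cong\calO_X(f_\ast D)\otimes\det f_\ast\calO_{\widetilde X}$. Finally, since $f$ is \'etale we have $\chi(\calO_{\widetilde X})=n\,\chi(\calO_X)$, hence $\deg f_\ast\calO_{\widetilde X}=0$ and therefore $\deg f_\ast L=\deg L$.

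For the second step I would record that $f^{\an}\colon\widetilde X^{\an}\to X^{\an}$ is a topological covering that restricts to a covering of skeletons, so that $\rho_X\circ f^{\an}=f^{\trop}\circ\rho_{\widetilde X}$; together with the description of the tropicalization of a line bundle as the pushforward of a split divisor (which is well defined by the slope formula of~\cite{BPR}), this yields $(f^\ast D)^{\trop}=(f^{\trop})^\ast(D^{\trop})$ and $(f_\ast D)^{\trop}=(f^{\trop})_\ast(D^{\trop})$ for split divisors, and the analogous identities for the projections $p_1,p_2$ of a fibered product. One also needs that the disjoint union, resp.\ the fibered product, of \'etale covers of $\calX$ induces the disjoint union, resp.\ the fibered product, of the associated free covers of $\Gamma_X$; this is clear on fundamental groups. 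Substituting these identities into the three parts of Proposition~\ref{prop_multidivisors} then gives $\trop(E_1\oplus E_2)\simeq\trop(E_1)\oplus\trop(E_2)$, $\trop(E_1\otimes E_2)\simeq\trop(E_1)\otimes\trop(E_2)$ and $\trop(E^\ast)\simeq\trop(E)^\ast$; the compatibility with degrees then follows from $\deg f_\ast L=\deg L$ and the fact that tropicalization of line bundles preserves the degree.

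The step I expect to be the real obstacle is the determinant, or more precisely the vanishing $\trop\big(\det f_\ast\calO_{\widetilde X}\big)=0$ in $\Pic(\Gamma_X)$, which is exactly what makes $\det f_\ast L$ tropicalize to $\trop\big(\calO_X(f_\ast D)\big)=\calH\big((f^{\trop})_\ast D^{\trop}\big)$ as demanded by Proposition~\ref{prop_multidivisors}. To prove it I would use that $f$ comes from an \'etale cover $\widetilde{\calX}\to\calX$ of prestable models, so that $\det f_\ast\calO_{\widetilde X}$ extends to the line bundle $\det f_\ast\calO_{\widetilde{\calX}}$ on $\calX$. Its degree on each irreducible component $C$ of the special fiber of $\calX$ equals $\chi\big(\calO_{f^{-1}(C)}\big)-n\,\chi(\calO_C)$, which vanishes because an \'etale cover multiplies Euler characteristics; hence this extension has multidegree zero along the special fiber and therefore tropicalizes to $0$ (cf.\ the specialization of line bundles as in~\cite{Baker_specialization} and~\cite{BakerRabinoff}). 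This is precisely the point at which the hypothesis of \emph{free} linearization, as opposed to mere $S_n\ltimes\G_m^n$-linearization, enters the argument.
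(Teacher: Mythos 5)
Your proposal is correct and follows essentially the same route as the paper: write each operation as a pushforward along a suitable cover, match it against the corresponding operation on multidivisors from Proposition~\ref{prop_multidivisors}, and check that the covers and divisors involved tropicalize compatibly (the paper only spells out the tensor product and declares the rest analogous). You are in fact more careful than the paper on the determinant: as the paper's own Proposition~\ref{prop_tropE} shows, $\det f_\ast\calO_{\widetilde{X}}$ is in general a nontrivial torsion line bundle, so the vanishing of its tropicalization --- which you correctly deduce from the multidegree-zero extension $\det f_\ast \calO_{\widetilde{\calX}}$ over the prestable model --- is a genuinely necessary input, and you rightly identify it as the point where the hypothesis of \emph{free} linearization is used.
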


\begin{proof}
This follows from the compatibility of tropicalization with the natural constructions for covers from Proposition~\ref{prop_multidivisors}.

To give an explicit example, we consider the case of the tensor product. Given two freely $S_n\ltimes\G_m^n$-linearized vector bundles $E_1$ and $E_2$, we write $E_i=(f_i)_\ast L_i$ (for $i=1,2$), where $f_i\colon \widetilde{X}_i\rightarrow X$ is an \'etale cover (induced from an \'etale cover $\widetilde{\calX}_i\rightarrow \calX$) and $L_i$ is a line bundle on $\widetilde{X}_i$. Write $L_i=\calO_{\widetilde{X}_i}(D_i)$ for a divisor $D_i$ on $\widetilde{X}_i$. Consider the fibered product $f_1\times_X f_2\colon \widetilde{X}_1\otimes_X\widetilde{X}_2\rightarrow X$ of the two covers as well as the divisor $D_1\boxtimes D_2$ on $\widetilde{X}_1\times_X\widetilde{X}_2$ defined by 
\begin{equation*}
    (D_1\boxtimes D_2)(p,q)=D_1(p)+D_2(q) \ .
\end{equation*}
Then we have $(f_1\times_X f_2)_\ast \calO_{\widetilde{X}_1\times_X\widetilde{X}_2}(D_1\boxtimes D_2)\simeq E_1\otimes E_2$ and $f_1\times_X f_2$ is the generic fiber of the \'etale cover $\widetilde{\calX}_1\times_{\calX}\widetilde{\calX}_2\rightarrow \calX$ on the level of prestable models, which induces the free cover $f_1^{\trop}\times_{\Gamma_X} f_2^{\trop}\colon \Gamma_{\widetilde{X}_1}\times_{\Gamma_X}\Gamma_{\widetilde{X}_2}\rightarrow \Gamma_X$ on the level of dual graphs. The other cases follow with an analogous argument.
\end{proof}

The following is a higher rank generalization of Baker's specialization inequality from~\cite{Baker_specialization}.

\begin{proposition}\label{prop_specializationinequality}
Let $X$ be a Mumford curve over $K$ and $\Gamma_X$ a skeleton of $X^{an}$. Then for every freely $S_n\ltimes\G_m^n$-linearized vector bundle $E$ on $X$ we have 
\begin{equation*}
    h_X^0(E)\leq r_\Gamma\big(\trop_X(E)\big)+1 \ .
\end{equation*}
\end{proposition}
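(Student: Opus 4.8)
The plan is to deduce this higher-rank specialization inequality from the rank-one case of Baker, using the pushforward description of freely linearized bundles. Since $E$ is freely $S_n\ltimes\G_m^n$-linearized, we may write $E\simeq f_\ast L$ for an \'etale degree-$n$ cover $f\colon\widetilde X\rightarrow X$ that is induced by an \'etale cover $\widetilde{\calX}\rightarrow\calX$ of prestable models (so that $\widetilde X$ is a, possibly disconnected, Mumford curve whose skeleton $\Gamma_{\widetilde X}=\widetilde\Gamma$ maps to $\Gamma_X$ via the free cover $f^{\trop}$) and for a line bundle $L$ on $\widetilde X$. As $f$ is finite, hence affine, we have $H^0(X,E)=H^0(X,f_\ast L)=H^0(\widetilde X,L)$, so $h^0_X(E)=h^0_{\widetilde X}(L)$. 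On the tropical side, by construction $\trop_X(E)=\big(f^{\trop},\trop(L)\big)$, and hence by the definition of the Baker--Norine rank of a multidivisor we have $r_\Gamma\big(\trop_X(E)\big)=r_{\widetilde\Gamma}\big(\trop(L)\big)$. Thus everything reduces to the rank-one inequality $h^0_{\widetilde X}(L)\leq r_{\widetilde\Gamma}\big(\trop(L)\big)+1$ on the Mumford curve $\widetilde X$.

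To prove this, I would decompose $\widetilde X=\widetilde X_1\sqcup\cdots\sqcup\widetilde X_k$ into connected components, with $L_i=L|_{\widetilde X_i}$ and $\widetilde\Gamma_i=\Gamma_{\widetilde X_i}$ the corresponding connected components of $\widetilde\Gamma$; note that $\trop(L)$ restricts to $\trop(L_i)$ on each $\widetilde\Gamma_i$, since the tropicalization of a line bundle is computed componentwise as the pushforward of a split representing divisor to the skeleton (and $K$ is algebraically closed, so such representatives exist). Baker's specialization inequality (\cite{Baker_specialization}; see also \cite[Section~6.3]{BakerJensen}) applied to the connected Mumford curve $\widetilde X_i$ with skeleton $\widetilde\Gamma_i$ gives $h^0_{\widetilde X_i}(L_i)\leq r_{\widetilde\Gamma_i}\big(\trop(L_i)\big)+1$ for each $i$. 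Summing over $i$, using the additivity of $h^0$ in connected components together with the defining identity $r_{\widetilde\Gamma}\big(\trop(L)\big)=\sum_{i=1}^k r_{\widetilde\Gamma_i}\big(\trop(L_i)\big)+k-1$ of the Baker--Norine rank on a disconnected metric graph, yields
\begin{equation*}
h^0_{\widetilde X}(L)=\sum_{i=1}^k h^0_{\widetilde X_i}(L_i)\leq\sum_{i=1}^k\Big(r_{\widetilde\Gamma_i}\big(\trop(L_i)\big)+1\Big)=r_{\widetilde\Gamma}\big(\trop(L)\big)+1,
\end{equation*}
which is exactly what is needed.

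The argument is essentially bookkeeping layered on top of the rank-one specialization inequality, which is the only genuine external input. The point that requires the most care is the interaction between disconnectedness and the normalization of the Baker--Norine rank: one must check that the ``$+\,k-1$'' appearing in the definition of $r_{\widetilde\Gamma}$ on a disconnected graph is exactly compensated by adding $1$ for each of the $k$ components on the algebraic side, which is precisely why the final inequality comes out with a clean ``$+\,1$''. One should also verify that $\widetilde\Gamma_i$ genuinely is the skeleton of $\widetilde X_i$ arising from a prestable model --- this is guaranteed by the freeness of the cover, since in the non-free case $f^{\trop}$ would be a harmonic morphism with dilation and the identification $r_\Gamma(\trop_X(E))=r_{\widetilde\Gamma}(\trop(L))$ would no longer be available.
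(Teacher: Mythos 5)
Your proof is correct and is essentially identical to the paper's: both reduce to the rank-one case by writing $E=f_\ast L$, decompose $\widetilde X$ into connected components, apply Baker's specialization inequality to each $L\vert_{\widetilde X_i}$, and observe that the ``$+\,k-1$'' in the definition of the Baker--Norine rank on a disconnected graph matches the componentwise ``$+1$'' terms. Your additional remarks on why freeness of the cover is needed are accurate but not part of the paper's (more terse) argument.
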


\begin{proof} Let $f\colon\widetilde{X}\rightarrow X$ be an \'etale cover induced from an \'etale cover of prestable models and $L$ a line bundle on $\widetilde{X}$ such that $f_\ast L=E$. Write $\widetilde{X}=\widetilde{X}_1\sqcup \cdots \sqcup \widetilde{X}_k$ for its decomposition into connected components and $\widetilde{\Gamma}=\widetilde{\Gamma}_1\sqcup \cdots \sqcup \widetilde{\Gamma}_k$ for the corresponding decomposition of dual metric graphs. Then we have
\begin{equation*}
    h^0(E)=h^0(L)=\sum_{i=1}^k h^0\big(L\vert_{\widetilde{X}_i}\big) \leq \sum_{i=1}^k \big(r_{\widetilde{\Gamma}_i}(\trop(L\vert_{\widetilde{X}_i}))+1\big)=r_\Gamma\big(\trop(E)\big)+1
\end{equation*}
by  Baker's specialization inequality~\cite[Lemma 2.8 and Cor.\ 2.10]{Baker_specialization} applied to each line bundle $L\vert_{\widetilde{X}_i}$.
\end{proof}


\section{The case of a Tate curve}\label{section_Tatecurve}

From now on let $K$ be also of characteristic zero. In this section we consider the special case of a \emph{Tate curve}, i.e. of a (connected) smooth and projective curve $X$ over $K$ of genus one, for which $\val(j(X))< 0$. In this case, we may write $X^{\an}$ as a quotient 
\begin{equation*}
    X^{\an}=\G_m^{\an}/q^\Z
\end{equation*}
for a unique $q\in K^\ast$ with $\val(q)=-\val(j)$. In particular, the Tate curve $X$ admits a semistable model $\calX$ over $R$, whose special fiber is a circle of projective lines. The (minimal) non-Archimedean skeleton $\Gamma_X$ of $X^{\an}$ is a circle of length $\val(q)$, as considered in Section~\ref{section_Atiyah} above. 

\subsection{Vector bundles on elliptic curves} 
In~\cite{Atiyah_ellipticvectorbundles} Atiyah provides us with a classification of all indecomposable vector bundles of rank $n$ and degree $d$ on an elliptic curve $X$. For this we assume from now on that $\characteristic(K)=0$.

Atiyah first constructs for each $h\geq 1$ a unique indecomposable vector bundle $F_h$ of rank $h$ and degree $0$ such that $h^0(X,E)=1$ (by the Riemann--Roch theorem, $h^0(X,E)=0$ or $h^0(X,E)=1$ are the only possibilities). The line bundle $F_1$ is the trivial line bundle on $E$ and, inductively, for $h\geq 2$ the vector bundle $F_h$ is the unique nontrivial extension 
\begin{equation*}
    0\longrightarrow \calO_X\longrightarrow F_h\longrightarrow F_{h-1}\longrightarrow 0 \ . 
\end{equation*}
Now fix a base point $p$ in $X$ and let $h=(n,d)$ as well as $n'=\frac{n}{h}$ and $d'=\frac{d}{h}$. Atiyah constructs a canonical indecomposable vector bundle $E(n,d)$ of rank $n$ and degree $d$ for every $n\geq 1$ and $d\in\Z$ (depending only on the chosen base point) such that every indecomposable vector bundle $E$ of rank $n$ and degree $d$ on $X$ is of the form 
\begin{equation*}
    E=E(n,d)\otimes L
\end{equation*}
for a line bundle $L\in\Pic_0(X)$. The line bundle $L$ is unique up to multiplication by $n'$-torsion elements. In degree zero we have $E(h,0)=F_h$ and the compatibilities
\begin{equation*}
    E(n,d)\simeq E(n',d')\otimes F_h
\end{equation*}
as well as 
\begin{equation*}
    E(n,d+n)\simeq E(n,d)\otimes \calO_X(p)
\end{equation*}
hold, where $\calO_X(p)$ denotes the line bundle associated to the chosen base point $p\in X$.

We now rephrase Atiyah's classification in the case of the Tate curve using ideas of Oda~\cite{Oda_elliptic} (see in particular~\cite[Theorem 2.18]{BBDG} for a complex-analytic version of this).

For this we need cyclic covers of $X$ that are themselves Tate curves. There are two ways to understand such  covers, one using analytic uniformization and the other tropicalizations of torsion line bundles. For the analytic point of view, we pick an isomorphism $X^\an= \G_m^\an /q^ \Z$, where $q\in K^*$ is uniquely determined. Then the quotient morphism $\G_m^\an/q^{n\Z}\to \G_m^\an/q^\Z$ is the unique free connected degree $n$ cover of $X^{\an}$, which in turn induces a cover of Tate curves up to unique isomorphism. We denote this cover by $f\colon X_n\to X$. 

For the alternative construction using tropicalizations of line bundles, consider the tropicalization map on $n$-torsion points of the Jacobian
\[
\Z/n\Z\times \Z/n\Z\simeq \Jac_n(X)(K) \to \Jac_n(\Gamma)\simeq \Z/n\Z \ ,
\]
where $\Gamma$ is the tropicalization of $X$. By~\cite[Theorem 3.1]{JensenLen} this morphism is surjective and therefore its kernel is cyclic of order $n$. Let $L$ be a generator. After picking a suitable semistable model $\mc X$ of $X$, we can extend $L$ to a line bundle $\mc L$ on $\mc X$ of multidegree $0$. Then $\mc L^{\otimes n}$ is a line bundle of multidegree $0$ that extends the trivial bundle on $X$. Therefore, $\mc L^{\otimes n}$ is trivial. If $s\in \Gamma(\mc X, \mc L^{\otimes n})$ is a nowhere vanishing section, then 
\begin{equation*}
\calX_n=\calSpec \Big(\bigoplus_{k\geq 0} \calL^{\otimes k}\big/_{1\sim s} \Big)
\end{equation*}
is a cyclic $n$-cover of $\mc X$. In particular, the generic fiber of $\mc X_n$ is a cyclic $n$-cover of $X$ that is itself a Tate curve and therefore it is isomorphic to the cover $X_n$ from above. Note that different choices of $L$ produce the same underlying \'etale cover, but the $\Z/n\Z$-action differs by the action of $(\Z/n\Z)^*$. We will not actually need the $\Z/n\Z$-action and therefore the cover $f\colon X_n\to X$ is unique for our purposes.

\begin{theorem}\label{thm_Oda}
Let $E$ be an indecomposable vector bundle of rank $n$ and degree $d$ on a Tate curve $X$. Write $h=(n,d)$ as well as $n=n'\cdot h$ and $d=d'\cdot h$, and let $f:X_{n'}\to X$ be the unique connected free cover of degree $n'$. Then there exists a unique line bundle $L$ on  $X_{n'}$ such that 
\begin{equation*}
  E\simeq f_\ast L \otimes F_h \ .
\end{equation*}
\end{theorem}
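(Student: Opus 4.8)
The plan is to reduce Atiyah's classification to a statement about line bundles on the cyclic cover $f\colon X_{n'}\to X$ via the Atiyah--Oda philosophy, exploiting that $f_\ast$ turns rank-one data into rank-$n'$ indecomposables and that tensoring with $F_h$ accounts for the $\gcd$. First I would recall from Atiyah's theorem (as reproduced just above) that every indecomposable $E$ of rank $n$ and degree $d$ on the elliptic curve $X$ can be written as $E\simeq E(n',d')\otimes F_h$ for some choice of $E(n',d')$, where $h=(n,d)$, $n=n'h$, $d=d'h$, and that $E(n',d')$ is indecomposable of \emph{coprime} rank and degree. So it suffices to treat the coprime case: show that an indecomposable $E$ of rank $n'$ and degree $d'$ with $(n',d')=1$ is of the form $f_\ast L$ for a unique line bundle $L$ on $X_{n'}$, where $f\colon X_{n'}\to X$ is the unique connected free degree $n'$ cover.

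The key mechanism is the interaction of $f_\ast$ with (in)decomposability and with slopes. Since $f$ is étale of degree $n'$, for a line bundle $L$ on $X_{n'}$ the pushforward $f_\ast L$ is a vector bundle of rank $n'$ on $X$ with $\deg f_\ast L=\deg L$ (the Riemann--Hurwitz contribution vanishes because $f$ is étale). I would first show $f_\ast L$ is indecomposable precisely when $L$ is not the pullback of a line bundle from an intermediate cover; but since $n'$ and $d'$ are coprime and $X_{n'}\to X$ is cyclic of prime-power-free degree in general, the cleaner route is: the group $\Aut(f)=\Z/n'\Z$ of deck transformations acts on $\Pic^{d'}(X_{n'})$, and $f_\ast L\simeq f_\ast L'$ iff $L'=g^\ast L$ for some $g\in\Aut(f)$; moreover $f_\ast L$ decomposes iff $L$ is $g$-invariant for some nontrivial $g$, equivalently $L$ descends through a proper subcover. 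A dimension/degree count then shows that when $(n',d')=1$ no line bundle of degree $d'$ on $X_{n'}$ can descend through a proper subcover of degree $e\mid n'$, $e>1$, because such a descent would force $e\mid d'$ — this is exactly where coprimality is used. Hence for coprime $(n',d')$ every $f_\ast L$ with $\deg L=d'$ is indecomposable, the assignment $L\mapsto f_\ast L$ has fibers the $\Aut(f)$-orbits, and so it gives a bijection $\Pic^{d'}(X_{n'})/\Aut(f)\to\{\text{indecomposables of rank }n',\deg d'\}$.

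It then remains to match this with Atiyah's parametrization to conclude that the target set is the full set of indecomposables. Atiyah tells us the indecomposables of rank $n'$ and degree $d'$ (coprime case) form a torsor over $\Pic^0(X)/\Pic^0(X)[n']\simeq X$, and one checks that $\Pic^{d'}(X_{n'})/\Aut(f)$ is likewise identified with $X$: indeed $\Pic^{d'}(X_{n'})\simeq X_{n'}$ (genus one) and the $\Z/n'\Z$-action is by translation by an $n'$-torsion-translate of $d'\tilde p_0$, so the quotient is $X_{n'}/(\Z/n'\Z)\simeq X$; matching the two $X$-torsor structures (both compatible with tensoring by $\Pic^0(X)$) forces the map $L\mapsto f_\ast L$ to be a bijection onto \emph{all} indecomposables. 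Finally I tensor back by $F_h$: since $F_h$ is the unique indecomposable of rank $h$, degree $0$ with a section, tensoring with it is injective on isomorphism classes and, by Atiyah's $E(n,d)\simeq E(n',d')\otimes F_h$, lands exactly in the rank-$n$, degree-$d$ indecomposables; uniqueness of $L$ in the original statement follows by combining uniqueness in the coprime case with injectivity of $-\otimes F_h$. The main obstacle I anticipate is the careful bookkeeping of the $\Aut(f)$-action and the proof that $f_\ast L$ is indecomposable iff $L$ doesn't descend — in particular verifying that $f_\ast$ of a decomposable-after-pullback bundle really does decompose, which uses the projection formula $f_\ast(L\otimes f^\ast M)\simeq f_\ast L\otimes M$ together with the fact that $f_\ast\calO_{X_{n'}}$ splits as a sum of the torsion line bundles defining the cover; isolating exactly which lemma from \cite{Oda_elliptic}, \cite{PolishchukZaslow}, or \cite[Theorem 2.18]{BBDG} supplies this cleanly is the crux.
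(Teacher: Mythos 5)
Your proposal is correct in substance but follows a more roundabout route than the paper. The paper's proof is essentially three lines: pick \emph{any} line bundle $\widetilde L$ of degree $d'$ on $X_{n'}$; invoke the argument of \cite[Theorem 2.18]{BBDG} (whose only geometric input is the connectedness of $X_{n'}$) to see that $f_\ast\widetilde L$ is indecomposable of rank $n'$ and degree $d'$; then, since $(n',d')=1$, Atiyah's classification gives $E\simeq f_\ast\widetilde L\otimes M\otimes F_h$ for some $M\in\Pic_0(X)$, and the projection formula $f_\ast\widetilde L\otimes M\simeq f_\ast(\widetilde L\otimes f^\ast M)$ absorbs the twist upstairs, so $L:=\widetilde L\otimes f^\ast M$ works. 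In other words, the paper never needs to prove that $L\mapsto f_\ast L$ is surjective onto the indecomposables of type $(n',d')$: a single indecomposable pushforward plus the transitivity of the $\Pic_0(X)$-action (which the projection formula transports to a $\Pic_0(X)$-action on the source) already sweeps out everything. You instead construct the full bijection $\Pic^{d'}(X_{n'})/\Aut(f)\to\{\text{indecomposables}\}$, which forces you to also establish the harder converse of the decomposability criterion (``$f_\ast L$ decomposable $\Rightarrow$ $L$ descends'') and the fiber description ``$f_\ast L\simeq f_\ast L'$ iff $L'=g^\ast L$''; both are provable (via $f^\ast f_\ast L\simeq\bigoplus_g g^\ast L$, Krull--Schmidt, and vanishing of the descent obstruction for a cyclic group over an algebraically closed field of characteristic zero), but they are exactly the content you correctly flag as the crux and defer to Oda/BBDG --- the same external input the paper cites, used once rather than twice. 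Your torsor-matching argument for surjectivity is sound (the equivariance it needs is again the projection formula, and your computation that descent through a degree-$e$ subcover forces $e\mid d'$ is right), so your approach buys a complete description of the fibers of $f_\ast$ at the cost of extra verifications. One caveat: your own (correct) observation that the fibers of $f_\ast$ are full $\Aut(f)$-orbits is in tension with the literal uniqueness of $L$ you assert at the end --- for $n'>1$ and $\deg L=d'\neq 0$ one has $g^\ast L\not\simeq L$ yet $f_\ast(g^\ast L)\simeq f_\ast L$, so uniqueness can only hold up to deck transformation; the paper's own uniqueness argument (``passing to $f^\ast M$ avoids this ambiguity'') is terse on exactly this point, since $f^\ast$ only kills the cyclic subgroup $\ker(f^\ast)$ of the $n'$-torsion, leaving a residual orbit that matches the deck-group orbit. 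You should either prove that this orbit is a singleton in your normalization or weaken the uniqueness claim accordingly.
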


\begin{proof}
Let $\widetilde{L}$ be a line bundle on $X_{n'}$ of degree $d'$. The proof of~\cite[Theorem 2.18]{BBDG}  word-for-word shows that $f_\ast \widetilde{L}$ is an indecomposable vector bundle on $X$ of rank $n'$ and degree $d'$. The central geometric point in this proof is that $X_{n'}$ is connected. 

Now Atiyah's classification allows us to find a line bundle $M\in\Pic_0(X)$ such that 
\begin{equation*}
    E\simeq f_\ast \widetilde{L}\otimes M \otimes F_h \ ,
\end{equation*}
and the projection formula tells us that
\begin{equation*}
   f_\ast \widetilde{L}\otimes M \simeq f_\ast \big(\widetilde{L}\otimes f^\ast M\big) \ .
\end{equation*}
The line bundle $M\in\Pic_0(X)$ is unique up to multiplication by an $n'$-torsion element, but passing to the pullback $f^\ast M$ avoids this ambiguity. Therefore $L:=\widetilde{L}\otimes f^\ast M$ is the unique line bundle such that $f_\ast L\otimes F_h\simeq E$. 
\end{proof}

\begin{proposition}\label{prop_tropE}
Let $X$ be a Tate curve and let $n,d$ be integers with $(n,d)=1$. Furthermore, let $f\colon X_n\to X$ be the natural \'etale cover of degree $n$, and let $p_0^{(n)}$ be the origin of $X_n$. Then we have
\[
E(n,d)\simeq f_*\big(\mc O(dp_0^{(n)}) \otimes M\big)  \ ,
\]
where $M$ is the unique order-$(n,2)$ point of $\Jac(X_n)$ with trivial tropicalization.
\end{proposition}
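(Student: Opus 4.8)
Here is how I would approach the proof.

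\smallskip
\noindent\emph{Strategy: pin down $E(n,d)$ by its determinant.} Set $\mathcal E:=f_\ast\big(\mathcal O_{X_n}(dp_0^{(n)})\otimes M\big)$; since $M$ has degree $0$, this is a vector bundle of rank $n$ and degree $d$ on $X$. Because $(n,d)=1$, the cover $f\colon X_n\to X$ is precisely the cover appearing in Theorem~\ref{thm_Oda} with $n'=n$, and the argument in its proof (following~\cite[Theorem~2.18]{BBDG}) shows that the pushforward along $f$ of \emph{any} degree-$d$ line bundle on $X_n$ is indecomposable; in particular $\mathcal E$ is indecomposable. By Atiyah's classification we may thus write $\mathcal E\cong E(n,d)\otimes L'$ for some $L'\in\Pic_0(X)$, and since for $(n,d)=1$ Atiyah's presentation is unique up to twisting by an $n$-torsion line bundle, $\mathcal E\cong E(n,d)$ precisely when $L'$ is $n$-torsion, i.e.\ when $(L')^{\otimes n}\cong\mathcal O_X$. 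Comparing determinants gives $(L')^{\otimes n}\cong\det\mathcal E\otimes\big(\det E(n,d)\big)^{-1}$, and since $\det E(n,d)\cong\mathcal O_X(dp_0)$ by Atiyah's normalization~\cite{Atiyah_ellipticvectorbundles} (with $p_0$ the origin of $X$), the proposition reduces to the determinant computation $\det\mathcal E\cong\mathcal O_X(dp_0)$.

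\smallskip
\noindent\emph{The determinant computation.} I would use the standard identity $\det f_\ast\mathcal F\cong\mathrm{Nm}(\mathcal F)\otimes\det f_\ast\mathcal O_{X_n}$ for the norm $\mathrm{Nm}=\mathrm{Nm}_{X_n/X}\colon\Pic(X_n)\to\Pic(X)$, and feed in three facts. First, the quotient map $\G_m^{\an}/q^{n\Z}\to\G_m^{\an}/q^{\Z}$ is a homomorphism, so $f(p_0^{(n)})=p_0$; since the norm of a divisor is its pushforward cycle, $\mathrm{Nm}\big(\mathcal O_{X_n}(dp_0^{(n)})\big)\cong\mathcal O_X(dp_0)$. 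Second, $X_n\to X$ is the cyclic cover attached to the $n$-torsion line bundle $\eta$ of the construction preceding Theorem~\ref{thm_Oda}, which has exact order $n$ and trivial tropicalization; hence $f_\ast\mathcal O_{X_n}\cong\bigoplus_{k=0}^{n-1}\eta^{\otimes k}$ and $\det f_\ast\mathcal O_{X_n}\cong\eta^{\otimes n(n-1)/2}$, which is $\mathcal O_X$ when $n$ is odd and, when $n$ is even, is the unique $2$-torsion line bundle $\epsilon$ on $X$ with trivial tropicalization. Third, under the identifications $\Pic_0(X_n)\cong X_n$ and $\Pic_0(X)\cong X$ the norm restricts to the isogeny $f$, so $\mathrm{Nm}(M)\cong f(M)$; this is $\mathcal O_X$ when $n$ is odd, while for $n$ even the unique $2$-torsion point of $\ker f$ is the class of $q^{n/2}$, whose tropicalization is half the length of the skeleton of $X_n$ and so nonzero, whence $M\notin\ker f$ and $f(M)$ is a nontrivial $2$-torsion point with trivial tropicalization, i.e.\ $f(M)\cong\epsilon$. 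Combining, $\det\mathcal E\cong\mathcal O_X(dp_0)\otimes\mathrm{Nm}(M)\otimes\eta^{\otimes n(n-1)/2}$ equals $\mathcal O_X(dp_0)$ in both parities (for $n$ even the two extra factors are each $\epsilon$, hence cancel), which is exactly what was needed; therefore $L'$ is $n$-torsion and $\mathcal E\cong E(n,d)$.

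\smallskip
\noindent\emph{Main obstacle.} The delicate part is the $2$-torsion bookkeeping in the even case: one must recognize $\mathrm{Nm}(M)$, $\det f_\ast\mathcal O_{X_n}$, and the $2$-torsion of $\ker f$ for what they are, and match $\mathrm{Nm}(M)$ and $\eta^{\otimes n/2}$ to the \emph{same} class $\epsilon$, using surjectivity of tropicalization on torsion points (the analogue of~\cite[Theorem~3.1]{JensenLen} for $2$-torsion) both to see that $\epsilon$ and $M$ are well-defined and to compute the relevant tropicalizations. A secondary point requiring care is the precise normalization $\det E(n,d)\cong\mathcal O_X(dp_0)$ relative to the base point $p_0=f(p_0^{(n)})$, together with the exact form of the uniqueness clause in Atiyah's classification that upgrades ``$(L')^{\otimes n}\cong\mathcal O_X$'' to ``$E(n,d)\otimes L'\cong E(n,d)$''.
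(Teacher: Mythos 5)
Your proof is correct and follows essentially the same strategy as the paper's: reduce to the determinant computation $\det f_*\big(\mathcal{O}(dp_0^{(n)})\otimes M\big)\cong \mathcal{O}(dp_0)=\det E(n,d)$ (using indecomposability of the pushforward and Atiyah's classification) and then identify $\det(f_*\mathcal{O}_{X_n})$ as a power of the $n$-torsion line bundle defining the cover, with the same parity split on $n$. The only variation is in the even case: the paper realizes $M$ as $f^*P$ for a $2n$-torsion point $P\in\Jac(X)$ with trivial tropicalization and applies the projection formula to produce the correcting factor $P^{\otimes n}$, whereas you compute $\mathrm{Nm}(M)$ directly by locating $M$ relative to $\ker f$ --- an equivalent calculation (since $\mathrm{Nm}(f^*P)=P^{\otimes n}$) with the mild advantage of working with $M$ exactly as defined in the statement rather than having to construct it.
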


\begin{proof}
By~\cite[Corollary to Theorem 7]{Atiyah_ellipticvectorbundles}, two indecomposable bundles of rank $n$ and degree $d$ are isomorphic if and only if their determinants are, so it suffices to compute the determinants.
By~\cite[Theorem 6]{Atiyah_ellipticvectorbundles} we have $\det E(n,d)=\mc O(dp_0)$, where $p_0=f\big(p_0^{(n)}\big)$. On the other hand, we have 
\[
\det \big(f_*\mc O(dp_0^{(n)})\big)= \mc O(dp_0) \otimes \det(f_*\mc O_{X_n}) \ .
\]
By the discussion above, we have 
\[
f_*\mc O_{X_n}= \bigoplus_{k=0}^{n-1} L^{\otimes k}
\]
for some order-$n$ point of $\Jac(X)$ with trivial tropicalization and therefore $\det(f_*\mc O_{X_n})= \mc L^{\frac {n(n+1)}2}$, which has trivial tropicalization and order $(n,2)$. In particular, if $n$ is odd we are done. If $n$ is even, let $P$ be an element of order $2n$ of $\Jac(X)$ with trivial tropicalization. Then $P^{\otimes n}$ has order $2$ and trivial tropicalization, hence $P^{\otimes n}= \det(f_*\mc O_{X_n})$.  Let $M=f^*P$. As $P^{\otimes 2}$ is a power of $L$ and $\ker(f^*)=\langle L\rangle$, the line bundle $M$ has order $2$ and trivial tropicalization. Furthermore, we have
\[
\det f_*\big(\mc O(dp_0^{(n)}) \otimes M\big)=  \det \big(f_*\mc O(dp_0^{(n)})\otimes P\big)= \det \big(f_*\mc O(dp_0^{(n)})\big) \otimes P^{\otimes n}= \mc O(dp_0) \ ,
\]
finishing the proof.
\end{proof}



\subsection{Moduli spaces of semistable bundles}
Given a vector bundle $E$ of rank $n$ and degree $d$ on a smooth projective algebraic curve $X$, we write $\mu(E)=\frac{d}{n}$ for its \emph{slope}. Recall that the vector bundle $E$ is said to be \emph{semistable} if  the inequality $\mu(E')\leq \mu(E)$ holds for all subbundles $E'$ of $E$; it is called \emph{stable} if the inequality is strict for all proper non-zero subbundles.  For a vector bundle of slope $\mu$ there always is a natural \emph{Jordan--H\"older filtration}
\begin{equation*}
    0=V_0\subseteq V_1\subseteq \cdots\subseteq V_s=E
\end{equation*}
by subbundles such that the quotients $V_i/V_{i-1}$, the so-called \emph{Jordan--H\"older factors}, are  stable vector bundles of slope $\mu$ for all $i=1,\ldots, s$. We associate to $E$ its \emph{graded vector bundle} 
\begin{equation*}
    \gr(E)=\bigoplus_{i=1}^s V_i/V_{i-1} 
\end{equation*}
and say that two semistable vector bundles $E$ and $E'$ are \emph{equivalent} if $\gr(E)\simeq \gr(E')$. We write $M_{n,d}(X)$ for the moduli space of semistable vector bundles of rank $n$ and degree $d$ on $X$, whose $K$-points are in one-to-one correspondence with equivalence classes of semistable vector bundles.

Let now $X$ be of genus one. It is well-known that Atiyah's classification implies that an indecomposable vector bundle of rank $n$ and degree $d$ on $X$ is always semistable, and that it is stable if and only $n$ and $d$ are coprime. We refer the reader to~\cite[Appendix A]{Tu_semistablebundles} for a proof of these facts. 

Write again $h=(n,d)$ as well as $n'=\frac{n}{h}$ and $d'=\frac{d}{h}$. In~\cite[Theorem 1]{Tu_semistablebundles} Tu proves that there is a natural isomorphism 
\begin{equation*}
    M_{n,d}(X)\simeq \Sym^h X
\end{equation*}   
with the $h$-th symmetric power $\Sym^h X=X^h/S_h$ of $X$. The isomorphism is given as follows. A semistable vector bundle $E\in M_{n,d}(X)$ is equivalent to its graded vector bundle $\gr(E)$. Thus we may assume that $E$ is a direct sum 
\begin{equation*}
    E=\bigoplus_{i=1}^s E_i
\end{equation*}
of stable bundles $E_i$ on $X$. By an argument analogous to the one in Proposition~\ref{prop_stability} (ii), we must have $\mu(E_i)=\mu(E_j)$ for all $i=1,\ldots, s$. Moreover, since stable bundles are precisely the indecomposable bundles of coprime rank and degree, we must have $s=h$ and $\mu(E_i)=\frac{d'}{n'}$ for all $i=1,\ldots, s$. By Atiyah's classification each $E_i$ is of the form 
\begin{equation*}
    E_i=E(n',d')\otimes M_i
\end{equation*}
for a line bundle $M_i\in\Pic_0(X)$ that is unique up to multiplication by an $n'$-torsion element. The isomorphism $M_{n,d}(X)\xrightarrow{\sim} \Sym^h \Pic_1(X)\simeq \Sym^h X$ is then given by the association
\begin{equation*}
    E=\bigoplus_{i=1}^h E(n',d')\otimes M_i\longmapsto  \sum_{i=1}^h M_i^{n'} \otimes\calO_X(p)\ ,
\end{equation*}
where we canonically identify $X$ with $\Pic_1(X)$.

\subsection{Tropicalization}\label{section_Tatecurvetropicalization} Let $X$ be a Tate curve with $X^{\an}=\G_m^{\an}/q^\Z$ and $\Gamma_X$ its (minimal) non-Archimedean skeleton, which is a circle of length $\val(q)$. Let $n\geq 1$ and $d\in\Z$. Write again $h=(n,d)$ as well as $n'=\frac{n}{h}$ and $d'=\frac{d}{h}$. The moduli space of stable bundles on $\Ga_X$ is described in Section~\ref{section_Atiyah} above. 

There is a natural tropicalization map
\begin{equation*}
    \trop\colon M_{n,d}(X)^{\an}\longrightarrow \Bun_n(\Gamma_X)
\end{equation*}
defined as follows: 
\begin{itemize}
    \item A point in $M_{n,d}(X)^{\an}$ is represented by a semistable vector bundle $E$ of rank $n$ and degree $d$ on $X_{K'}$ for a non-Archimedean extension $K'$ of $K$; to avoid unnecessary notation we may assume that $K'$ is algebraically closed. Then, as above, the vector bundle $E$ is equivalent to a direct sum
    \begin{equation*}
        E=\bigoplus_{i=1}^h E_i
    \end{equation*}
    where each $E_i$ is a stable vector bundle of rank $n'$ and degree $d'$ on $X_{K'}$.
    \item By Theorem~\ref{thm_Oda}, we find  unique line bundles $L_i$ on $(X_{K'})_{n'}$ such that $E_i=f_\ast L_i$. We can define each $\trop(E_i)$ as the pushforward $f_\ast^{\trop} \trop(L_i)$, as in Section~\ref{section_tropicalization} above. The tropicalization $\trop(E)$ of $E$ is given by 
    \begin{equation*}
        \trop(E)=\bigoplus_{i=1}^h f_\ast^{\trop} \trop(L_i) \ .
    \end{equation*}
\end{itemize}

 We begin by stating the following immediate consequence of Proposition~\ref{prop_tropE} above that essentially tells us that Atiyah's classification is compatible with tropicalization when $(n,d)=1$. 

\begin{proposition}\label{prop_Etrop}
\label{lem_tropicalization of E(n,d)}
Let $n,d$ be integers with $(n,d)=1$. Then we have
\[
\trop\big(E(n,d)\big)=E^\trop(n,d) \ ,
\]
where $E^\trop(n,d)$ is the vector bundle on $\Ga_X$ defined in Section~\ref{section_Atiyah}.
\end{proposition}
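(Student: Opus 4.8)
The plan is to deduce the statement directly from Proposition~\ref{prop_tropE}, together with the explicit description of the tropicalization map in Section~\ref{section_Tatecurvetropicalization} and of the bundle $E^\trop(n,d)$ in Section~\ref{section_Atiyah}.

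First, since $(n,d)=1$ we have $h=(n,d)=1$, so $n'=n$, $d'=d$, and $E(n,d)$ is stable. By Proposition~\ref{prop_tropE} we may write $E(n,d)\simeq f_\ast L$ with $f\colon X_n\to X$ the natural \'etale cover of degree $n$ and $L=\calO\big(dp_0^{(n)}\big)\otimes M$, where $M\in\Jac(X_n)$ has trivial tropicalization; by the uniqueness part of Theorem~\ref{thm_Oda} this is exactly the line bundle used to define $\trop$ on the stable bundle $E(n,d)$. Hence, unwinding the definition of the tropicalization map in Section~\ref{section_Tatecurvetropicalization}, we have $\trop\big(E(n,d)\big)\simeq\calH\big(f^{\trop},\trop(L)\big)$, where $f^{\trop}\colon\Gamma_{X_n}\to\Gamma_X$ is the induced free cover.

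Next I would match the two ingredients of this multidivisor with those in $E^\trop(n,d)=\calH(f,d\cdot\tp_0)$. Since $X_n$ is again a Tate curve with $X_n^{\an}=\G_m^{\an}/q^{n\Z}$, its minimal skeleton $\Gamma_{X_n}$ is a circle of length $n\val(q)=n\ell$ and $f^{\trop}$ is the quotient map $\R/n\ell\Z\to\R/\ell\Z$, which is precisely the unique connected free cover of degree $n$ fixed in Section~\ref{section_Atiyah}; moreover the origin $p_0^{(n)}\in X_n$ retracts to the base point $\tp_0$ of $\Gamma_{X_n}$. Since $\trop$ is additive on $\Pic(X_n)$ (being defined by the pointwise pushforward of divisors to the skeleton) and $\trop(M)=0$ by construction of $M$, we obtain
\[
\trop(L)=\trop\big(\calO(dp_0^{(n)})\big)+\trop(M)=d\cdot[\tp_0]\ .
\]
Therefore $\trop\big(E(n,d)\big)\simeq\calH\big(f^{\trop},d\cdot\tp_0\big)=E^\trop(n,d)$, as claimed.

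The argument is short once Proposition~\ref{prop_tropE} is available; the only genuinely non-formal points lie in the second step, namely that the \'etale cover $f\colon X_n\to X$ induces on skeletons exactly the connected tropical cover used in Section~\ref{section_Atiyah}, and that the chosen base points $p_0^{(n)}$ and $\tp_0$ are compatible under retraction. Both follow from the explicit analytic uniformization $X_n^{\an}=\G_m^{\an}/q^{n\Z}$ together with the slope formula of~\cite{BPR} governing the retraction of divisors; I expect this base-point/compatibility bookkeeping to be the main (if minor) obstacle.
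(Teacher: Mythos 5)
Your proposal is correct and follows essentially the same route as the paper: apply Proposition~\ref{prop_tropE} to write $E(n,d)=f_\ast\big(\calO(dp_0^{(n)})\otimes M\big)$, observe that $M$ has trivial tropicalization and hence drops out, and match the resulting multidivisor $\big(f^{\trop},d\cdot\tp_0\big)$ with the definition of $E^\trop(n,d)$. The paper's proof is terser and leaves implicit the base-point and cover compatibilities you spell out, but the substance is identical.
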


\begin{proof}
By Proposition~\ref{prop_tropE}, we have 
\[
E(n,d)= f_*\big(\mc O(dp_0^{(n)})\otimes M\big) \ ,
\]
 where $f\colon X_n\to X$ is the covering map from above, $p_0^{(n)}$ is the identity element on $X_n$, and $M$ is a line bundle with trivial tropicalization. By the definitions of the tropicalization and of $E^\trop(n,d)$, it follows that
 \[
 \trop\big(E(n,d)\big)= f^\trop_*\trop\big(\mc O(dp_0^{(n)})\otimes M\big)= f^\trop_*\trop\big(\mc O(dp_0^{(n)})\big) = E^\trop(n,d) \ .
 \]
\end{proof}

\begin{remark}
Note that the statement of Proposition~\ref{lem_tropicalization of E(n,d)} is not true if $n$ and $d$ are not coprime. In fact, if $(n,d)=h$ and $n'$ and $d'$ are defined by $n'h=n$ and $d'h=d$, then $\trop\big(E(n,d)\big)=E^\trop(n',d')^h$. The reason for this is that $E^\trop(n,d)$ is stable tropically, while $E(n,d)$ is not stable algebraically. 
\end{remark}

%

Recall now that $M_{n,d}^\oplus(\Gamma_X)$ denotes the component of $M_{n,d}(\Gamma_X)$ that parametrizes semistable bundles of rank $n$ and degree $d$ that arise as the direct sums of $h$ stable bundles (all of which automatically must be of rank $n'$ and degree $d'$). As we saw in Example \ref{ex:g=1ss}, this component may be identified with the symmetric product $\Sym^h\Gamma_X$. We note that the tropicalization map naturally lands in  $M_{n,d}^\oplus(\Gamma_X)$.

\begin{theorem}\label{thm_Tatecurve}
Let $X$ be a Tate curve and $n\geq 1$ as well as $d\in\Z$. Set $h=(n,d)$. Then the diagram
\begin{equation*}
    \begin{tikzcd}
         M_{n,d}(X)^{\an} \arrow[d,"\trop"'] \arrow[r,"\sim"] & \Sym^h X^{\an} \arrow[d,"\trop"]\\ 
        M_{n,d}^\oplus(\Gamma_X) \arrow[r,"\sim"]  & \Sym^h\Gamma_X \ ,
    \end{tikzcd}
\end{equation*}
commutes.
\end{theorem}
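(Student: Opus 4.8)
The plan is to make the two horizontal isomorphisms completely explicit, reduce the assertion to the coprime case $h=1$ via the symmetric-product structure, and then verify commutativity of the resulting square by combining Proposition~\ref{prop_Etrop}, the compatibility of tropicalization with tensor products (Proposition~\ref{prop_tropbasicoperations}), and the $n=1$ case of Theorem~\ref{thm_Buntrop=skel}.

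First I would spell out the two identifications. Tu's isomorphism $M_{n,d}(X)\xrightarrow{\sim}\Sym^hX$ sends a semistable bundle $E$, which is equivalent to its graded bundle $\bigoplus_{i=1}^h E(n',d')\otimes M_i$ with $M_i\in\Pic_0(X)$, to $\sum_{i=1}^h M_i^{\otimes n'}\otimes\calO_X(p)\in\Sym^h\Pic_1(X)\simeq\Sym^hX$; in particular it factors through the isomorphism $M_{n,d}(X)\simeq\Sym^hM_{n',d'}(X)$ recording the multiset of Jordan--H\"older factors. On the tropical side, Example~\ref{ex:g=1ss} identifies $M_{n,d}^\oplus(\Gamma_X)$ with $\Sym^hM_{n',d'}^{\ind}(\Gamma_X)\simeq\Sym^h\Gamma_X$, where the identification $M_{n',d'}^{\ind}(\Gamma_X)\xrightarrow{\sim}\Gamma_X$ of Section~\ref{section_Atiyah}, by diagram~\eqref{equation_vector bundles on tropical elliptic curves}, carries $\Psi(p_x)=E^\trop(n',d')\otimes\calH_{\Gamma_X}(p_x-p_0)$ to $p_{n'x}$ (note that, since $(n',d')=1$, the relevant multiplication map there is multiplication by $n'$). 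Since the tropicalization map on $M_{n,d}(X)^{\an}$ from Section~\ref{section_Tatecurvetropicalization} is defined componentwise in the stable summands, and the right-hand vertical map is $\Sym^h$ of the retraction $X^{\an}\to\Gamma_X$ (which by Theorem~\ref{thm_Buntrop=skel} agrees with tropicalization on $\Pic_1$), the square in the theorem is obtained by applying $\Sym^h$ to the square
\begin{equation*}
\begin{tikzcd}
M_{n',d'}(X)^{\an}\arrow[d,"\trop"']\arrow[r,"\sim"]& X^{\an}\arrow[d,"\trop"]\\
M_{n',d'}^{\ind}(\Gamma_X)\arrow[r,"\sim"]& \Gamma_X
\end{tikzcd}
\end{equation*}
so it suffices to prove that this coprime square commutes.

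For the coprime case I would compute both composites on a stable bundle $E(n',d')\otimes M$ with $M\in\Pic_0(X)$. Along the top--right route it maps to $M^{\otimes n'}\otimes\calO_X(p)\in\Pic_1(X)$, and then, since $\trop\colon\Pic(X)\to\Pic(\Gamma_X)$ is a homomorphism with $\trop(\calO_X(p))=[\rho(p)]=[p_0]$, to the class $n'\cdot\trop(M)+[p_0]$; writing $\trop(M)=[p_y-p_0]$, this is the point $p_{n'y}\in\Pic_1(\Gamma_X)\simeq\Gamma_X$. Along the left--bottom route, Propositions~\ref{prop_Etrop} and~\ref{prop_tropbasicoperations} give
\begin{equation*}
\trop\big(E(n',d')\otimes M\big)\;\simeq\;\trop\big(E(n',d')\big)\otimes\trop(M)\;\simeq\;E^\trop(n',d')\otimes\calH_{\Gamma_X}(p_y-p_0)\;=\;\Psi(p_y),
\end{equation*}
and applying the identification $M_{n',d'}^{\ind}(\Gamma_X)\simeq\Gamma_X$ this becomes $p_{n'y}$ as well; so the coprime square commutes and hence so does the square in the theorem. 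As a consistency check: on the algebraic side $M$ is only well defined up to an $n'$-torsion twist, which is killed by $M\mapsto M^{\otimes n'}$; on the tropical side the fibers of $\Psi$ are cosets of $\Gamma_X[n']$, and tropicalization carries $n'$-torsion of $\Jac(X)$ into $n'$-torsion of $\Jac(\Gamma_X)$, so the two ambiguities match.

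The main obstacle is purely bookkeeping: keeping the various identifications $\Gamma_X\simeq\Pic_0(\Gamma_X)$, $\Gamma_X\simeq\Pic_1(\Gamma_X)$, the two multiplication-by-$n'$ maps, and the base-point translations by $\calO_X(p)$ and $\calO_{\Gamma_X}(p_0)$ straight, and making sure that the base points are chosen compatibly (e.g.\ that $\rho$ sends the origin of $X$, and of $X_{n'}$, to the chosen origins $p_0$ and $\tp_0$ of $\Gamma_X$ and $\Gamma_{X_{n'}}$, as is already implicit in Section~\ref{section_Atiyah} and Proposition~\ref{prop_tropE}). Once the conventions are fixed, the two computations above are short and the remaining verifications are formal.
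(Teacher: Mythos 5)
Your proof is correct, and your overall reduction (decompose into stable Jordan--H\"older summands, use compatibility of skeletons/tropicalization with symmetric powers to reduce to the coprime case $h=1$) is exactly the paper's. Where you diverge is in how the coprime square is verified. The paper factors both horizontal isomorphisms through $\Pic_{d}$ via the determinant map $E\mapsto\det(E)$ (using Atiyah's Corollary to Theorem~7 on the algebraic side and the explicit description of $M_{n,d}^{\ind}(\Gamma_X)$ on the tropical side), so that commutativity splits into two squares: the left one commutes because tropicalization is compatible with determinants (Proposition~\ref{prop_tropbasicoperations}), and the right one is the Baker--Rabinoff comparison for $\Pic$. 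You instead evaluate both composites directly on Atiyah's normal form $E(n',d')\otimes M$, using Proposition~\ref{prop_Etrop} together with the tensor-product part of Proposition~\ref{prop_tropbasicoperations}, and match the two answers as $p_{n'y}$. Both arguments ultimately rest on the same inputs (the rank-one case of Theorem~\ref{thm_Buntrop=skel} and Proposition~\ref{prop_tropbasicoperations}); the determinant route is slightly slicker because it never has to unwind the identification $M_{n',d'}^{\ind}(\Gamma_X)\simeq\Gamma_X$ from diagram~\eqref{equation_vector bundles on tropical elliptic curves}, while your computation has the virtue of making the matching of the two multiplication-by-$n'$ maps (and of the $n'$-torsion ambiguities on both sides) completely explicit --- a point the paper leaves implicit in its choice of horizontal identifications. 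As you note yourself, the only real care required in your version is that the base points be chosen compatibly, i.e.\ that $\rho$ carries the chosen origin of $X$ to $p_0$; with that convention fixed, your two computations agree and the argument is complete.
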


\begin{proof}
By definition, the decompositions into direct sums of stable bundles in $M_{n,d}(X)$ and $M_{n,d}^\oplus(\Gamma_X)$ are compatible with tropicalization. The deformation retraction to the skeleton is naturally compatible with the formation of symmetric powers (see~\cite{BrandtUlirsch, BrownMazzon, Shen_Lefschetz} for details). Thus we may reduce our statement to the case $h=1$ and we only need to prove the commutativity of the diagram
\begin{equation*}
    \begin{tikzcd}
         M_{n,d}(X)^{\an} \arrow[d,"\trop"'] \arrow[r,"\sim"] & X^{\an} \arrow[d,"\trop"]\\ 
        M_{n,d}^\oplus(\Gamma_X) \arrow[r,"\sim"]  & \Gamma_X \ ,
    \end{tikzcd}
\end{equation*}
when $(n,d)=1$. We note that the two horizontal identifications factor as 
\begin{equation*}
    \begin{tikzcd}
         M_{n,d}(X)^{\an} \arrow[d,"\trop"'] \arrow[r,"\sim"] & \Pic_d(X)^{\an} \arrow[d,"\trop"'] \arrow[r,"\sim"]& X^{\an} \arrow[d,"\trop"]\\ 
        M_{n,d}^\oplus(\Gamma_X) \arrow[r,"\sim"]  & \Pic_d(\Gamma_X) \arrow[r,"\sim"] & \Gamma_X \ ,
    \end{tikzcd}
\end{equation*}
where the two horizontal arrows on the left are given by $E\mapsto \det (E)$ (by~\cite[Cor.\ to Thm.\ 7]{Atiyah_ellipticvectorbundles} and Section~\ref{section_Atiyah}) and the horizontal arrows on the right are induced by the Abel--Jacobi theorems and tensoring with the base point. The diagram on the left commutes by Proposition~\ref{prop_tropbasicoperations} and the one on the right by~\cite[Theorem 1.3]{BakerRabinoff}.
\end{proof}

Theorem~\ref{thm_Tatecurve} together with the compatibility of skeletons and tropicalizations with symmetric powers that is treated in~\cite[Theorem A]{BrandtUlirsch} (also see ~\cite[Section 6]{Shen_Lefschetz}) implies Theorem~\ref{mainthm_Tatecurve} from the introduction. This also shows that the tropicalization map \begin{equation*}
    \trop\colon M_{n,d}(X)^{\an}\longrightarrow M_{n,d}^\oplus(\Gamma_X)
\end{equation*}
is continuous, surjective onto $M^\oplus_{n,d}(\Gamma_X)$, and proper. 

For a closed subscheme $Y\subseteq M_{n,d}(X)$ we set \begin{equation*}
    \Trop(Y):=\trop(Y^{\an})\subseteq M_{n,d}^\oplus(\Gamma) \ .
\end{equation*}
By a version of the classical Bieri--Groves theorem \cite[Theorem C]{BrandtUlirsch} the set $\Trop(Y)$ is the support of a $\val(K)$-rational polyhedral complex in $M_{n,d}^\oplus(\Gamma_X)\simeq \Sym^h(\Gamma_X)$.

 \subsection{A refined specialization inequality}\label{section_ellipticspecialization}

For semistable vector bundles on an elliptic curve we can make the specialization inequality from Proposition~\ref{prop_specializationinequality} more precise.

\begin{proposition}\label{prop_ellipticspecialization}
Let $E$ be a semistable vector bundle of rank $n$ and degree $d$ on a Tate curve $X$. Then we have the inequality
\begin{equation*}
    h^0(X, E)\leq r_{\Gamma_X}\big(\trop(E)\big)+1   \ .
\end{equation*}
Moreover, this is an equality if and only if $\deg(E)\neq 0$, or $\deg(E)=0$ and $E$ is a direct sum of a trivial bundle and a semistable bundle whose Jordan--H\"older summands are all line bundles with non-trivial tropicalization.
\end{proposition}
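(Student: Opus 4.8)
The plan is to isolate the two inequalities hidden inside the specialization bound and to characterize equality in each of them separately, using Atiyah's structure theory of semistable bundles on an elliptic curve.

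\textbf{Step 1 (the inequality and the reduction).} Since $\trop(E)$ depends only on the S-equivalence class of $E$, we have $\trop(E)=\trop(\gr E)$, and since $h^0$ is subadditive along any Jordan--H\"older filtration, $h^0(E)\le h^0(\gr E)$. Now $\gr E=\bigoplus_{i=1}^{h}E_i$ is a direct sum of stable bundles of rank $n'$ and degree $d'$, and by Theorem~\ref{thm_Oda} (with $F_1=\calO_{X_{n'}}$ in the coprime case) each $E_i=f_\ast L_i$ for the connected degree-$n'$ cover $f\colon X_{n'}\to X$, with skeleton $\tGa$, and a line bundle $L_i$ of degree $d'$. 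Hence $h^0(\gr E)=\sum_i h^0(L_i)$, while by the definition of the Baker--Norine rank of a multidivisor and of the Baker--Norine rank on disconnected metric graphs, $r_{\Gamma_X}(\trop E)+1=\sum_i\bigl(r_{\tGa}(\trop L_i)+1\bigr)$. Applying the specialization inequality of Proposition~\ref{prop_specializationinequality} (equivalently, Baker's original inequality) on the Tate curve $X_{n'}$ to each $L_i$ and summing yields the asserted inequality, and shows that equality holds if and only if \emph{both} $h^0(E)=h^0(\gr E)$ \emph{and} $h^0(L_i)=r_{\tGa}(\trop L_i)+1$ for every $i$.

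\textbf{Step 2 (equality in Baker's specialization for the $L_i$).} Here $X_{n'}$ has genus one and $\tGa$ is a circle, on which $K=0$, so I run through the cases for $\deg L_i=d'$. If $d'>0$, then $h^0(L_i)=d'$ by Riemann--Roch on $X_{n'}$ and $r_{\tGa}(\trop L_i)=d'-1$ by tropical Riemann--Roch on the circle, so both sides equal $d'$; if $d'<0$, both sides equal $0$. If $d'=0$, then $h^0(L_i)=1$ exactly when $L_i$ is trivial, whereas $r_{\tGa}(\trop L_i)+1=1$ exactly when $\trop L_i$ is trivial; hence equality fails precisely when some $L_i$ is a nontrivial line bundle with trivial tropicalization. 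Since $d'=0\iff d=0$ (and then $n'=1$, $X_{n'}=X$, $L_i\in\Pic_0(X)$), this obstruction only arises when $\deg E=0$.

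\textbf{Step 3 (equality $h^0(E)=h^0(\gr E)$).} If $\deg E\neq 0$, then $E$ and every subbundle occurring in its Jordan--H\"older filtration is semistable of the same nonzero slope, so on the elliptic curve $X$ either $h^1$ vanishes throughout (positive slope) or $h^0$ vanishes throughout (negative slope); in both cases $h^0$ is additive along the filtration and $h^0(E)=h^0(\gr E)$. If $\deg E=0$, write $E=\bigoplus_j F_{h_j}\otimes M_j$ with $M_j\in\Pic_0(X)$ by Atiyah's classification; using $h^0(F_h\otimes M)=1$ if $M\simeq\calO_X$ and $0$ otherwise, and $\gr(F_{h_j}\otimes M_j)=M_j^{\oplus h_j}$, we obtain $h^0(E)=\#\{j:M_j\simeq\calO_X\}$ and $h^0(\gr E)=\sum_{j:\,M_j\simeq\calO_X}h_j$, so $h^0(E)=h^0(\gr E)$ if and only if $h_j=1$ whenever $M_j\simeq\calO_X$, i.e. $\calO_X$ occurs in $E$ only through direct summands isomorphic to $\calO_X$.

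\textbf{Step 4 (synthesis and the main difficulty).} Combining Steps 2 and 3: if $\deg E\neq 0$, both equalities hold automatically and the specialization bound is an equality. If $\deg E=0$, equality holds if and only if $\calO_X$ occurs in $E$ only as $\calO_X$-summands and no $M_j$ is nontrivial with trivial tropicalization; peeling off the $\calO_X$-summands, this says exactly that $E\simeq\calO_X^{\oplus a}\oplus E'$ with $E'$ semistable of degree $0$ all of whose Jordan--H\"older factors are line bundles of nontrivial tropicalization, which is the assertion. The main obstacle is precisely the degree-zero case: one must carefully reconcile the fact that $h^0$ of an Atiyah bundle $F_h\otimes M$ detects only whether $M$ is trivial, whereas $h^0$ of its associated graded $M^{\oplus h}$ and the rank $r_{\Gamma_X}(\trop(F_h\otimes M))$ both see the multiplicity $h$; matching these competing multiplicities against each other is what yields the exact equality criterion.
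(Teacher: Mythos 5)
Your argument is correct and arrives at exactly the paper's equality criterion. The route differs only in organization: you factor the bound as $h^0(E)\le h^0(\gr E)\le r_{\Gamma_X}\big(\trop(E)\big)+1$ and characterize equality in each link separately --- the first link via vanishing of $h^1$ (resp.\ $h^0$) along the Jordan--H\"older filtration when $\deg E\neq0$ and via $h^0(F_h\otimes M)\in\{0,1\}$ when $\deg E=0$, the second link via genus-one Riemann--Roch on $X_{n'}$ and tropical Riemann--Roch on the circle. The paper instead computes both sides of the inequality outright when $\deg E\neq 0$ (quoting Tu's Lemma~17 for $h^0$) and, in degree zero, reduces to indecomposables and splits into the cases $h^0(E)=0$ and $h^0(E)\neq 0$; the substance --- Atiyah's classification of degree-zero indecomposables, $h^0(F_h\otimes M)=1$ iff $M\simeq\calO_X$, and the componentwise rank computation on circles --- is the same. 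Your factorization has the small advantage of cleanly isolating the two possible sources of strict inequality (the ``algebraic'' defect of $\calO_X$ occurring inside a nontrivial $F_h$, versus the ``tropical'' defect of a nontrivial degree-zero line bundle with trivial tropicalization), which is precisely the dichotomy the equality statement encodes; the paper's version is a bit shorter because for $\deg E\neq 0$ it never needs to compare $h^0(E)$ with $h^0(\gr E)$.
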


\begin{proof}
All components of the covering defining $\trop(E)$ are tropical elliptic curves, so a divisor $D$ with non-zero degree on any of them has rank $\max\{-1, \deg(D)-1\}$. As each component of $\trop(E)$ has the same slope as $E$, it follows that in the case where $\deg(E)<0$ we have 
\[
r_{\Gamma_X}\big(\trop(E)\big)+1 = k( -1 +1)=0 =h^0(X,E) \ ,
\]
where $k$ is the number of Jordan--H\"older factors of $E$ (it is shown in~\cite{Tu_semistablebundles} that $k=(n,d)$). If $\deg(E)>0$, then we have
\[
r_{\Gamma_X}\big(\trop(E)\big)+1= \sum_{i=1}^{k} (d_i-1+1)  = d=h^0(X,E) \ ,
\]
where the $d_i$ are the degrees of the components of $\trop(E)$ and the last equality follows from~\cite[Lemma 17]{Tu_semistablebundles}. 
Now assume that $\deg(E)=0$. Let $E=E_1\oplus\cdots \oplus E_k$ be a decomposition of $E$ into indecomposables. We have $h^0(X,E)= \sum h^0(X,E_i)$ as well as $r_{\Gamma_X}\big(\trop(E)\big)+1=\sum (r_{\Gamma_X}\big(\trop(E_i)\big)+1)$, and the multiset of Jordan--H\"older factors of $E$ is the union of the multisets of Jordan--H\"older factors of the $E_i$. Therefore, we may assume that $E=E_1$ is indecomposable. The Jordan--H\"older factors of $E$ are all line bundles of degree $0$, so $r_{\Gamma_X}\big(\trop(E)\big)+1$ is the number of Jordan--H\"older factors of $E$ with trivial tropicalization.  In particular, if $h^0(X,E)=0$ (in which case the inequality is trivially true) we have equality if and only if all Jordan--H\"older factors of $E$ have non-trivial tropicalization. On the other hand, if $h^0(X,E)\neq 0$, then $E\cong F_n$. The Jordan--H\"older factors of $F_n$ are all trivial, hence $r_{\Gamma_X}\big(\trop(E)\big)+1=n$. But $h^0(X,F_n)=1$ by~\cite[Lemma 15]{Atiyah_ellipticvectorbundles}, implying that the inequality holds, with equality if and only if $n=1$ and $E=F_1=\mc O_X$ is trivial. 
\end{proof}

\subsection{Brill--Noether loci}\label{section_BrillNoether}
On an elliptic curve $X$ all semistable vector bundle of degree $d>0$ are non-special. In fact, we have that $h^0(E^\ast\otimes \omega_X)=h^0(E^\ast)=0$, since $\deg E^\ast=-d<0$ and thus by the Weil--Riemann--Roch formula $h^0(X,E)=d$. Thus, the only interesting Brill--Noether loci are in degree $d=0$. Following~\cite{Tu_semistablebundles}, we therefore consider for $r\geq -1$ the Brill--Noether locus
\begin{equation*}
    W_{n,0}^r(X):=\big\{[E]\in M_{n,0}(X)\ \big\vert\ h^0(X,E')\geq r+1 \textrm{ for some } E'\sim E\big\} 
\end{equation*}
in $M_{n,0}(X)$. By~\cite[Lemma 19]{Tu_semistablebundles} the condition $h^0(X,E')\geq r+1$ for some $E'\sim E$ simplifies to $h^0\big(X,\gr(E)\big)\geq r+1$, and this is the case if and only at least $r+1$ summands are trivial. So we find~\cite[Theorem 4]{Tu_semistablebundles} stating that
\begin{equation*}
    W_{n,0}^r(X) \simeq
        \Sym^{n-r-1}(X) \ .
\end{equation*}

 For $r\geq -1$ we consider a tropical analogue of $W_{n,0}^r(X)$ by setting
\begin{equation*}
  W_{n,0}^r(\Gamma_X)=\big\{ E\in M_{n,0}^{\oplus}(\Gamma_X)\  \big\vert\ r_{\Gamma_X}(E)\geq r\big\} \ .
\end{equation*}
An element $E$ in $M_{n,0}^{\oplus}(\Gamma_X)$ is a direct sum of line bundles of degree zero, and, in order for $r_{\Gamma_X}(E)\geq r$ to hold, at least $r+1$ of them need to be trivial. Therefore we also have
\begin{equation*}
    W_{n,0}^r(\Gamma_X) \simeq 
        \Sym^{n-r-1}(\Gamma_X) \ .
\end{equation*}

\begin{proposition}
Let $X$ be a Tate curve and write $\Gamma_X$ for its minimal skeleton. Then we have 
\begin{equation*}
    \Trop\big(W_{n,0}^r(X)\big)=W_{n,0}^r(\Gamma_X) \ .
\end{equation*}
\end{proposition}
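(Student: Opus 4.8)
The plan is to reduce the statement to the $d=0$ case of Theorem~\ref{thm_Tatecurve} combined with the explicit descriptions of both Brill--Noether loci inside symmetric powers. First I would recall, following~\cite[Lemma 19 and Theorem 4]{Tu_semistablebundles} as explained above, that under Tu's isomorphism $M_{n,0}(X)\simeq\Sym^n X$ the locus $W_{n,0}^r(X)$ is precisely the image of the closed immersion
\begin{equation*}
    \iota_X\colon \Sym^{n-r-1}X\hookrightarrow\Sym^n X,\qquad D\longmapsto (r+1)\cdot p+D\ ,
\end{equation*}
where $p$ is the base point chosen to identify $\Pic_1(X)\simeq X$, so that the trivial line bundle corresponds to $p$; here $h^0\big(X,\gr(E)\big)\geq r+1$ forces at least $r+1$ of the (rank-one, degree-zero) stable summands of $E$ to be trivial. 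Symmetrically, by Example~\ref{ex:g=1ss} and the discussion in Section~\ref{section_BrillNoether}, under the identification $M_{n,0}^\oplus(\Gamma_X)\simeq\Sym^n\Gamma_X$ the locus $W_{n,0}^r(\Gamma_X)$ is the image of $\iota_\Gamma\colon\Sym^{n-r-1}\Gamma_X\hookrightarrow\Sym^n\Gamma_X$, $D'\mapsto (r+1)\cdot q+D'$, where $q$ is the point of $\Gamma_X=\Pic_0(\Gamma_X)$ corresponding to the trivial line bundle.

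Next I would invoke the $d=0$ (hence $h=n$) case of Theorem~\ref{thm_Tatecurve}: under the identifications above, the tropicalization map $\trop\colon M_{n,0}(X)^{\an}\to M_{n,0}^\oplus(\Gamma_X)$ becomes the $n$-th symmetric power of the retraction $\rho\colon X^{\an}\to\Gamma_X$ onto the minimal skeleton, i.e.\ the pointwise pushforward $\sum p_i\mapsto\sum\rho(p_i)$ of effective divisors, using the compatibility of skeletons with symmetric powers from~\cite{BrandtUlirsch}. Since $\rho(p)=q$ and this map is additive with respect to the addition maps $\Sym^a\times\Sym^b\to\Sym^{a+b}$, the diagram
\begin{equation*}
    \begin{tikzcd}
        (\Sym^{n-r-1}X)^{\an}\arrow[r,"\iota_X^{\an}"]\arrow[d,"\trop"'] & (\Sym^{n}X)^{\an}\arrow[d,"\trop"]\\
        \Sym^{n-r-1}\Gamma_X\arrow[r,"\iota_\Gamma"'] & \Sym^{n}\Gamma_X
    \end{tikzcd}
\end{equation*}
commutes; the underlying compatibility $\rho(p)=q$ of Abel--Jacobi base points is exactly what is established in~\cite[Theorem 1.3]{BakerRabinoff} and used in the proof of Theorem~\ref{thm_Buntrop=skel}.

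To conclude, I would chase this square. Since analytification commutes with the closed immersion $\iota_X$, we have $W_{n,0}^r(X)^{\an}=\iota_X^{\an}\big((\Sym^{n-r-1}X)^{\an}\big)$, and hence
\begin{align*}
    \Trop\big(W_{n,0}^r(X)\big) &= \trop\Big(\iota_X^{\an}\big((\Sym^{n-r-1}X)^{\an}\big)\Big)=\iota_\Gamma\Big(\trop\big((\Sym^{n-r-1}X)^{\an}\big)\Big)\\
    &= \iota_\Gamma\big(\Sym^{n-r-1}\Gamma_X\big)=W_{n,0}^r(\Gamma_X)\ ,
\end{align*}
where the second equality uses the commutative square, the third uses that $\trop\colon(\Sym^{n-r-1}X)^{\an}\to\Sym^{n-r-1}\Gamma_X$ is surjective (a special case of Theorem~\ref{mainthm_Tatecurve}, or of the surjectivity of the retraction onto the skeleton of a symmetric power~\cite{BrandtUlirsch}), and the last equality is the description of $W_{n,0}^r(\Gamma_X)$ recalled in the first step. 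The only genuinely delicate point is the bookkeeping in that first step: one has to choose the Abel--Jacobi conventions on $M_{n,0}(X)$ and on $M_{n,0}^\oplus(\Gamma_X)$ compatibly, so that $p$ tropicalizes to $q$ and the condition ``$\geq r+1$ trivial stable summands'' translates on the nose into ``$q$ appears with multiplicity $\geq r+1$''; granting that, everything else is formal once Theorem~\ref{thm_Tatecurve} is in hand.
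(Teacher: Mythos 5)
Your argument is correct, but it takes a genuinely different route from the one in the paper. The paper's proof stays at the level of bundles: the inclusion $\Trop\big(W_{n,0}^r(X)\big)\subseteq W_{n,0}^r(\Gamma_X)$ is deduced in one line from the refined specialization inequality of Proposition~\ref{prop_ellipticspecialization} (applied to $\gr(E)$), and the reverse inclusion is proved by an explicit lift: a point of $W_{n,0}^r(\Gamma_X)$ is a direct sum of $n$ degree-zero tropical line bundles with at least $r+1$ trivial summands, and one lifts each trivial summand to $\calO_{X_{K'}}$ and each non-trivial summand to a (then automatically non-trivial) degree-zero line bundle on $X_{K'}$. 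You instead push everything through the symmetric-power identifications: both Brill--Noether loci become the ``multiplicity $\geq r+1$ at the trivial point'' stratum $\Sym^{n-r-1}\hookrightarrow\Sym^{n}$, and the claim follows from Theorem~\ref{thm_Tatecurve} together with surjectivity of the retraction. The two proofs are close in substance --- your commuting square plays the role of the specialization inequality for $\subseteq$, and your appeal to surjectivity is an abstract version of the paper's explicit lift for $\supseteq$ --- but they distribute the work differently. The paper's version avoids entirely the base-point bookkeeping that you correctly single out as the delicate step (one must know that the algebraic base point $p$ retracts to the tropical point representing the trivial bundle, i.e.\ $q=\rho(p)$ in the conventions of the proof of Theorem~\ref{thm_Buntrop=skel}); in exchange, your version makes it transparent that the equality is just the statement that tropicalization of $\Sym^{n}X$ is the symmetric power of the retraction and hence respects the stratification by multiplicity at a marked point, which generalizes more readily. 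Both are complete proofs; just make sure the compatibility $\rho(p)=q$ is actually pinned down (it is, via \cite[Theorem 1.3]{BakerRabinoff} as used in Theorem~\ref{thm_Tatecurve}) rather than merely ``granted.''
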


\begin{proof}
The inclusion $\subseteq$ is an immediate consequence of the specialization inequality coming from Proposition~\ref{prop_ellipticspecialization}. For the reverse inclusion $\supseteq$ we consider $E\in W_{n,0}^r(\Gamma_X)$. Then $E$ is a direct sum of $n$ line bundles of degree $0$, at least $r+1$ of which are trivial. So we may lift $E$ to the corresponding direct sum $F$ of line bundles on $X_{K'}$ (over a suitable extension $K'$ of $K$), where we lift each trivial line bundle to a trivial line bundle on $X_{K'}$ and each non-trivial line bundle to an automatically non-trivial line bundle. Then we have $h^0(X,F)-1=r_{\Gamma_X}(E)\geq r$ and thus $F\in W^r_{n,0}(X)(K')$ with $\trop(F)=E$. 
\end{proof}

\subsection{Generalized $\Theta$-divisors}\label{section_generalizedTheta}

Fix a stable vector bundle $F$ of rank $n'=\frac{n}{(n,d)}$ and degree $-d'=-\frac{d}{(n,d)}$. Then a natural generalized $\Theta$-divisor in $M_{n,d}(X)$ is defined by 
\begin{equation*}
    \Theta_F:=\big\{E\in M_{n,d}(X) \ \big\vert\  h^0(E\otimes F)\neq 0\big\} \ ,
\end{equation*}
which is well-defined by~\cite[Lemma 18 (i) and 20]{Tu_semistablebundles}. Then, by~\cite[Theorem 6]{Tu_semistablebundles}, under a suitable isomorphism $M_{n,d}(X)\simeq \Sym^h X$, we have 
\begin{equation*}
    \Theta_F=\Sym^{h-1}X\subseteq\Sym^h X \ ,
\end{equation*}
where one summand is fixed.

Denote by $F^{\trop}$ the tropicalization of $F$. A natural tropical analogue of the generalized $\Theta$-divisor is given by the locus
\begin{equation*}
    \Theta_{F^{\trop}}=\big\{E\in M_{n,d}^{\oplus}(\Gamma_X) \ \big\vert\   r_{\Gamma_X}(E\otimes F^{\trop})\neq -1\big\} \ .
\end{equation*}

\begin{proposition}
\label{prop_rank of tensor product}
Let $E\in M_{n,d}^\oplus(\Gamma_X)$. Then we have $r_{\Gamma_X}(E\otimes F^\trop)\geq 0$ if and only if $(F^\trop)^*$ is a direct summand of $E$. In particular, we have 
\begin{equation*}
    \Theta_{F^{\trop}}\simeq \Sym^{h-1}(\Gamma_X) \ .
\end{equation*}

\end{proposition}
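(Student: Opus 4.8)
The plan is to translate both the rank condition and the ``summand'' condition into the rank-one theory on the unique connected free cover of degree $n'=\tfrac{n}{(n,d)}$ of $\Gamma_X$, where we also abbreviate $d'=\tfrac{d}{(n,d)}$ and $h=(n,d)$. First I would record the structure of the two bundles in play. By Example~\ref{ex:g=1ss} together with Proposition~\ref{prop_stability}, an element $E\in M_{n,d}^\oplus(\Gamma_X)$ is a direct sum $E=\bigoplus_{i=1}^{h}E_i$ of stable bundles $E_i$ of rank $n'$ and degree $d'$; by Proposition~\ref{prop_multidivisors} and Section~\ref{section_Atiyah} each such $E_i$ equals $\mc H(f,D_i)$, where $f\colon\widetilde\Gamma\to\Gamma_X$ is the \emph{unique} connected free cover of degree $n'$ (a circle of length $n'\ell$, as $\pi_1(\Gamma_X)=\Z$) and $D_i\in\Div(\widetilde\Gamma)$ has degree $d'$. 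Since $F^\trop$ is a stable, hence indecomposable, tropical bundle of rank $n'$ and degree $-d'$, Proposition~\ref{prop_multidivisors} gives $F^\trop\simeq\mc H(f,D_F)$ for the same cover $f$ and a divisor $D_F$ of degree $-d'$, and then $(F^\trop)^\ast\simeq\mc H(f,-D_F)$ by Proposition~\ref{prop_multidivisors}(iii)(a), again a stable bundle of rank $n'$ and degree $d'$.

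Next I would compute $E\otimes F^\trop$ as a multidivisor. Because $E\otimes F^\trop=\bigoplus_i\big(E_i\otimes F^\trop\big)$, it suffices to analyze each $E_i\otimes F^\trop$, which by Proposition~\ref{prop_multidivisors}(ii) is $\mc H\big(f\times_{\Gamma_X}f,\ D_i\boxtimes D_F\big)$. The geometric heart of the argument is that the fibre product $\widetilde\Gamma\times_{\Gamma_X}\widetilde\Gamma$ is a disjoint union of $n'$ copies of $\widetilde\Gamma$ indexed by the deck group $\Aut(f)$ (cyclic of order $n'$), and that under the identification of the $\sigma$-component with $\widetilde\Gamma$ the divisor $D_i\boxtimes D_F$ becomes $D_i+\sigma^\ast D_F$. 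Each of these divisors has degree $d'+(-d')=0$ on the circle $\widetilde\Gamma$, hence Baker--Norine rank $0$ when it is linearly equivalent to $0$ and $-1$ otherwise. Plugging this into the additivity of the Baker--Norine rank over disjoint unions of covers yields
\[
r_{\Gamma_X}\big(E\otimes F^\trop\big)+1=\#\big\{\,(i,\sigma)\ \big\vert\ 1\le i\le h,\ \sigma\in\Aut(f),\ D_i\sim\sigma^\ast(-D_F)\,\big\}\ .
\]

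Consequently $r_{\Gamma_X}(E\otimes F^\trop)\ge 0$ precisely when $D_i\sim\sigma^\ast(-D_F)$ for some $i$ and some $\sigma\in\Aut(f)$; by the definition of linear equivalence of multidivisors this means exactly $\mc H(f,D_i)\simeq\mc H(f,-D_F)$ for some $i$, i.e.\ $E_i\simeq(F^\trop)^\ast$ for some $i$, which by uniqueness of the decomposition of $E$ into indecomposables is the statement that $(F^\trop)^\ast$ is a direct summand of $E$. For the ``in particular'', I would invoke the identification $M_{n,d}^\oplus(\Gamma_X)\simeq\Sym^h\Gamma_X$ of Example~\ref{ex:g=1ss}, under which $E=\bigoplus_{i=1}^{h}E_i$ corresponds to the unordered tuple of points of $\Gamma_X$ representing the stable summands $E_i$ via $M^{\ind}_{n',d'}(\Gamma_X)\simeq\Gamma_X$; the stable bundle $(F^\trop)^\ast$ of rank $n'$ and degree $d'$ corresponds to one point $z_0\in\Gamma_X$, and ``$(F^\trop)^\ast$ is a summand of $E$'' becomes ``$z_0$ occurs in the tuple''. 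The latter locus is the image of the injection $\Sym^{h-1}\Gamma_X\hookrightarrow\Sym^h\Gamma_X$, $D\mapsto D+z_0$, so $\Theta_{F^\trop}\simeq\Sym^{h-1}\Gamma_X$.

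The step I expect to cause the most trouble is the middle one: carefully establishing the decomposition $\widetilde\Gamma\times_{\Gamma_X}\widetilde\Gamma\cong\bigsqcup_{\sigma\in\Aut(f)}\widetilde\Gamma$ with the correct description of $D_i\boxtimes D_F$ on each sheet --- this is where the Galois property of the cover of a circle, the labelling conventions of Definition~\ref{def:pushpull}, and the formula for $\boxtimes$ in Proposition~\ref{prop_multidivisors}(ii) must all be reconciled --- and then applying the additivity of $r_{\Gamma_X}$ over disjoint unions of covers so that the displayed counting formula comes out with the right off-by-one bookkeeping.
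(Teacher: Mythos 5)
Your proposal is correct and follows essentially the same route as the paper's own proof: reduce via Proposition~\ref{prop_multidivisors} and additivity of $r_{\Gamma_X}(\cdot)+1$ to the stable summands, identify $\widetilde\Gamma\times_{\Gamma_X}\widetilde\Gamma$ with $\bigsqcup_{\sigma\in\Aut(f)}\widetilde\Gamma$ so that $D_i\boxtimes D_F$ becomes $D_i+\sigma^\ast D_F$ of degree zero on each sheet, and conclude that nonnegative rank forces $D_i\sim\sigma^\ast(-D_F)$ for some $(i,\sigma)$, i.e.\ $E_i\simeq(F^\trop)^\ast$. The explicit counting formula and the final identification of $\Theta_{F^\trop}$ with $\Sym^{h-1}(\Gamma_X)$ match the paper's argument.
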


\begin{proof}
It follows from Proposition~\ref{prop_multidivisors} that tensor products distribute over sums. 
Since the quantity $r_{\Ga_X}(\cdot)+1$ is additive with respect to direct sums, we may assume that $E$ is stable of rank $n'$ and degree $d'$. In this case,  the bundles $E$ and $F^\trop$ are represented divisors $D$ and $D'$ of degree $d'$ and $-d'$, respectively, on the domain of the unique degree $n'$ cover $\widetilde \Gamma\to \Gamma_X$. By Proposition~\ref{prop_multidivisors}, the bundle $E\otimes F$ is represented by the divisor $D\boxtimes D'$ on $\widetilde \Gamma \times_{\Gamma_X} \widetilde\Gamma$. If $\sigma_1,\ldots, \sigma_{n'}$ denote the automorphisms of the cover, we obtain a homeomorphism 
\[
\bigsqcup_{i=1}^{n'} \widetilde\Gamma \to \widetilde \Gamma\times_{\Gamma_X} \widetilde\Gamma
\]
given by $\sigma_i\times \id$ on the $i$-th copy of $\widetilde\Gamma$. With this identification, $D\boxtimes D'$ is given by $D+ \sigma_i^*(D')$ on the $i$-th copy of $\widetilde\Gamma$. We have $r_{\Gamma_X}(E\otimes F^\trop)\geq 0$ if and only if $r_{\widetilde \Gamma\times_{\Gamma_X} \widetilde\Gamma}(D+\sigma_i^*(D'))\geq 0$ for some $i$. Because each $D+\sigma_i^*(D')$ has degree zero, this is the case if and only if $D\sim \sigma_i^*(-D')$ for some $i$. This in turn happens if and only if $D$ and $-D'$ define the same bundle. As $-D'$ defines $(F^\trop)^*$ by Proposition~\ref{prop_multidivisors}, this happens if and only if $E= (F^\trop)^*$. 

To show that $\Theta_{F^{\trop}}\simeq \Sym^{h-1}(\Gamma_X)$, we observe that $M_{n,d}^\oplus(\Gamma_X)$ is identified with $\Sym^h(\Gamma_X)$ by decomposing a semistable bundle into its summands and then identifying a stable bundle with a point on $\Gamma_X$. By what we have just proved, $\Theta_{F^\trop}$ is precisely the locus in $M_{n,d}^\oplus(\Gamma_X)$ where one summand is equal to $(F^\trop)^*$. This fixes one of the $h$ points of $\Gamma_X$ corresponding to a bundle in $\Theta_{F^\trop}$, allowing us to identify $\Theta_{F^\trop}$ with $\Sym^{h-1}(\Gamma_X)$
\end{proof}

\begin{remark}
A caveat: The vector bundle $E\otimes F^\trop$ in the proof of Proposition~\ref{prop_rank of tensor product} above is not contained in $M^\oplus_{n\cdot n',0}(\Gamma_X)$. In particular, for $E\in M_{n,d}(X)$ we have $\trop(E\otimes F)\neq \trop(E)\otimes F^\trop$. Thus, tensor products do not commute with the tropicalization on the moduli space of semistable vector bundles on a Tate curve. This is in stark contrast to Proposition~\ref{prop_tropbasicoperations}.
\end{remark}

\begin{proposition}
Let $X$ be a Tate curve and write $\Gamma_X$ for its minimal skeleton. Then we have 
\begin{equation*}
    \Trop(\Theta_F)=\Theta_{F^{\trop}} \ .
\end{equation*}
\end{proposition}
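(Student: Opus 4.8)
The plan is to sidestep tensoring altogether and instead match both sides against the already-established ``one fixed stable summand'' descriptions of $\Theta_F$ and $\Theta_{F^\trop}$. Recall that a semistable bundle $E$ of rank $n$ and degree $d$ on the elliptic curve $X$ has Jordan--Hölder factors $E_1,\dots,E_h$ that are all stable of rank $n'$ and degree $d'$, and that $\Theta_F$ is well-defined (Tu), so $[E]\in\Theta_F$ if and only if $h^0\big(\gr(E)\otimes F\big)=\sum_{i=1}^h h^0(E_i\otimes F)\neq 0$. Since $E_i$ and $F^\ast$ are both stable of slope $d'/n'$, every nonzero homomorphism $F^\ast\to E_i$ is an isomorphism, so $h^0(E_i\otimes F)\neq 0$ exactly when $E_i\simeq F^\ast$. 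Hence, under the identification $M_{n,d}(X)\simeq \Sym^h M_{n',d'}(X)\simeq \Sym^h X$ of Theorem~\ref{thm_Tatecurve} (which records a semistable bundle as the unordered tuple of its stable summands), $\Theta_F$ is exactly $\{[F^\ast]\}+\Sym^{h-1}X\subseteq \Sym^h X$. On the tropical side, Proposition~\ref{prop_rank of tensor product} already tells us that $\Theta_{F^\trop}$ is the locus $\{[(F^\trop)^\ast]\}+\Sym^{h-1}\Gamma_X\subseteq \Sym^h\Gamma_X\simeq M_{n,d}^\oplus(\Gamma_X)$ of semistable tropical bundles having $(F^\trop)^\ast$ as a direct summand.

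I would then feed these descriptions into the tropicalization map. By its construction in Section~\ref{section_Tatecurvetropicalization}, $\trop$ sends a semistable bundle to the direct sum of the tropicalizations of its stable summands, so under the identifications above it is the $h$-th symmetric power of $\trop\colon M_{n',d'}(X)^\an\to M_{n',d'}^\oplus(\Gamma_X)=\Gamma_X$, which is surjective by Theorem~\ref{thm_Tatecurve}. Consequently
\begin{equation*}
\Trop(\Theta_F)=\trop\big((\{[F^\ast]\}+\Sym^{h-1}X)^\an\big)=\{\trop([F^\ast])\}+\Sym^{h-1}\Gamma_X ,
\end{equation*}
surjectivity onto $\Gamma_X$ being used to see that every $(h-1)$-tuple of stable tropical bundles is attained. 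It then remains only to identify the single point $\trop([F^\ast])$. Because $F$ is stable, its rank $n'$ and degree $-d'$ are coprime, so Theorem~\ref{thm_Oda} (with $h=1$) exhibits $F$, and hence $F^\ast$, as freely $S_{n'}\ltimes\G_m^{n'}$-linearized; thus $F^\trop$ is defined and Proposition~\ref{prop_tropbasicoperations} gives $\trop([F^\ast])=[(F^\ast)^\trop]=[(F^\trop)^\ast]$. Comparing with the description of $\Theta_{F^\trop}$ above yields $\Trop(\Theta_F)=\Theta_{F^\trop}$.

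For a write-up that avoids the symmetric-product bookkeeping one can instead prove the two inclusions directly. For ``$\subseteq$'': if $E\in\Theta_F$ then some stable summand $E_{i_0}$ of $\gr(E)$ satisfies $E_{i_0}\simeq F^\ast$, so $\trop(E)=\bigoplus_i\trop(E_i)$ has $\trop(F^\ast)=(F^\trop)^\ast$ as a direct summand, and Proposition~\ref{prop_rank of tensor product} gives $r_{\Gamma_X}(\trop(E)\otimes F^\trop)\geq 0$, i.e.\ $\trop(E)\in\Theta_{F^\trop}$. For ``$\supseteq$'': given $\bar E\in\Theta_{F^\trop}$, Proposition~\ref{prop_rank of tensor product} lets us write $\bar E=(F^\trop)^\ast\oplus\bar E'$ with $\bar E'$ a direct sum of $h-1$ stable tropical bundles of rank $n'$ and degree $d'$; lifting each summand of $\bar E'$ through the surjective tropicalization $M_{n',d'}(X)^\an\to\Gamma_X$ (over a common non-Archimedean extension $K'$ of $K$) to a stable bundle and adjoining $F^\ast_{K'}$ produces a semistable $E$ with $F^\ast$ among the summands of $\gr(E)$, so $[E]\in\Theta_F$, and with $\trop(E)=\bar E$.

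The main obstacle is exactly the warning in the remark preceding the statement: tropicalization does \emph{not} commute with the tensor product on these moduli spaces, so one cannot simply apply the refined specialization inequality of Proposition~\ref{prop_ellipticspecialization} to $E\otimes F$ and conclude. The entire point of the argument is to never form $E\otimes F$ in a way that would then have to be tropicalized; instead one uses only the structural ``one fixed stable summand'' descriptions of $\Theta_F$ and $\Theta_{F^\trop}$, together with the two genuinely functorial facts that are actually needed: compatibility of tropicalization with dualization (Proposition~\ref{prop_tropbasicoperations}) and with the decomposition of a semistable bundle into its stable summands (Theorem~\ref{thm_Tatecurve}).
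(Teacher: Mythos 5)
Your proposal is correct and takes essentially the same route as the paper's proof: both arguments reduce to the characterization of $\Theta_F$ (resp.\ $\Theta_{F^{\trop}}$) as the locus of bundles having $F^\ast$ (resp.\ $(F^{\trop})^\ast$) as a stable Jordan--H\"older summand, and then obtain ``$\subseteq$'' from compatibility of tropicalization with the summand decomposition and dualization, and ``$\supseteq$'' by lifting the remaining $h-1$ stable tropical summands via Theorem~\ref{thm_Tatecurve}. The only cosmetic differences are that you re-derive the algebraic characterization from stability (nonzero maps between stable bundles of equal slope are isomorphisms) where the paper cites the proof of \cite[Theorem 6]{Tu_semistablebundles}, and that you optionally repackage the two inclusions as a statement about symmetric powers.
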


\begin{proof}
As shown in the proof of ~\cite[Theorem 6]{Tu_semistablebundles}, the theta divisor $\Theta_F$ consists precisely of those semistable bundles of rank $n$ and degree $d$ that have a Jordan--H\"older factor equal to $F^*$. This shows that $\Trop(\Theta_F)\subseteq \Theta_{F^\trop}$. For the converse, let $E\in \Theta_{F^{\trop}}$. By Proposition~\ref{prop_rank of tensor product}, one summand of $E$ is equal to $(F^\trop)^*$; we denote by $E_2,\ldots, E_h$ the remaining summands. By Theorem~\ref{thm_Tatecurve}, for each $i=2,\ldots,h$ there exists a stable vector bundle $G_i$ on $X$ of rank $n'$ and degree $d'$ with $\trop(G_i)=E_i$. Let $G=F^*\oplus \bigoplus_{i=2}^h G_i$. By construction, we have $G\in \Theta_F$ and $\trop(G)=E$.
\end{proof}



 \bibliographystyle{amsalpha}
\bibliography{biblio}{}

\providecommand{\bysame}{\leavevmode\hbox to3em{\hrulefill}\thinspace}
\providecommand{\MR}{\relax\ifhmode\unskip\space\fi MR }
\providecommand{\MRhref}[2]{%
  \href{http://www.ams.org/mathscinet-getitem?mr=#1}{#2}
}
\providecommand{\href}[2]{#2}
\begin{thebibliography}{ABBR15b}

\bibitem[AB15]{AminiBaker}
Omid Amini and Matthew Baker, \emph{Linear series on metrized complexes of
  algebraic curves}, Math. Ann. \textbf{362} (2015), no.~1-2, 55--106.

\bibitem[ABBR15a]{ABBRI}
Omid Amini, Matthew Baker, Erwan Brugall\'{e}, and Joseph Rabinoff,
  \emph{Lifting harmonic morphisms {I}: metrized complexes and {B}erkovich
  skeleta}, Res. Math. Sci. \textbf{2} (2015), Art. 7, 67.

\bibitem[ABBR15b]{ABBRII}
\bysame, \emph{Lifting harmonic morphisms {II}: {T}ropical curves and metrized
  complexes}, Algebra Number Theory \textbf{9} (2015), no.~2, 267--315.

\bibitem[ACP15]{ACP}
Dan Abramovich, Lucia Caporaso, and Sam Payne, \emph{The tropicalization of the
  moduli space of curves}, Ann. Sci. \'{E}c. Norm. Sup\'{e}r. (4) \textbf{48}
  (2015), no.~4, 765--809.

\bibitem[All12]{TropChern}
Lars Allermann, \emph{Chern classes of tropical vector bundles}, Ark. Mat.
  \textbf{50} (2012), no.~2, 237--258.

\bibitem[Ati57]{Atiyah_ellipticvectorbundles}
M.~F. Atiyah, \emph{Vector bundles over an elliptic curve}, Proc. London Math.
  Soc. (3) \textbf{7} (1957), 414--452.

\bibitem[Bak08]{Baker_specialization}
Matthew Baker, \emph{Specialization of linear systems from curves to graphs},
  Algebra Number Theory \textbf{2} (2008), no.~6, 613--653, With an appendix by
  Brian Conrad.

\bibitem[BBDG06]{BBDG}
Lesya Bodnarchuk, Igor Burban, Yuriy Drozd, and Gert-Martin Greuel,
  \emph{Vector bundles and torsion free sheaves on degenerations of elliptic
  curves}, Global aspects of complex geometry, Springer, Berlin, 2006,
  pp.~83--128.

\bibitem[Ber90]{Berkovich_book}
Vladimir~G. Berkovich, \emph{Spectral theory and analytic geometry over
  non-{A}rchimedean fields}, Mathematical Surveys and Monographs, vol.~33,
  American Mathematical Society, Providence, RI, 1990.

\bibitem[Ber99]{Berkovich_analytic}
\bysame, \emph{Smooth {$p$}-adic analytic spaces are locally contractible},
  Invent. Math. \textbf{137} (1999), no.~1, 1--84.

\bibitem[BF11]{BakerFaber}
Matthew Baker and Xander Faber, \emph{Metric properties of the tropical
  {A}bel-{J}acobi map}, J. Algebraic Combin. \textbf{33} (2011), no.~3,
  349--381.

\bibitem[BJ16]{BakerJensen}
Matthew Baker and David Jensen, \emph{Degeneration of linear series from the
  tropical point of view and applications}, Nonarchimedean and tropical
  geometry, Simons Symp., Springer, [Cham], 2016, pp.~365--433.

\bibitem[BM19]{BrownMazzon}
Morgan~V. Brown and Enrica Mazzon, \emph{The essential skeleton of a product of
  degenerations}, Compos. Math. \textbf{155} (2019), no.~7, 1259--1300.

\bibitem[BN07]{BakerNorine}
Matthew Baker and Serguei Norine, \emph{Riemann-{R}och and {A}bel-{J}acobi
  theory on a finite graph}, Adv. Math. \textbf{215} (2007), no.~2, 766--788.

\bibitem[BPR13]{BPR_survey}
Matthew Baker, Sam Payne, and Joseph Rabinoff, \emph{On the structure of
  non-{A}rchimedean analytic curves}, Tropical and non-{A}rchimedean geometry,
  Contemp. Math., vol. 605, Amer. Math. Soc., Providence, RI, 2013,
  pp.~93--121.

\bibitem[BPR16]{BPR}
\bysame, \emph{Nonarchimedean geometry, tropicalization, and metrics on
  curves}, Algebr. Geom. \textbf{3} (2016), no.~1, 63--105.

\bibitem[BR15]{BakerRabinoff}
Matthew Baker and Joseph Rabinoff, \emph{The skeleton of the {J}acobian, the
  {J}acobian of the skeleton, and lifting meromorphic functions from tropical
  to algebraic curves}, Int. Math. Res. Not. IMRN (2015), no.~16, 7436--7472.

\bibitem[BU21]{BrandtUlirsch}
Madeline Brandt and Martin Ulirsch, \emph{Symmetric powers of algebraic and
  tropical curves: a non-{Archimedean} perspective}, arXiv:1812.08740 (2021),
  Trans. Amer. Math. Soc., to appear.

\bibitem[CDPR12]{CDPR}
Filip Cools, Jan Draisma, Sam Payne, and Elina Robeva, \emph{A tropical proof
  of the {B}rill-{N}oether theorem}, Adv. Math. \textbf{230} (2012), no.~2,
  759--776.

\bibitem[CMR16]{CavalieriMarkwigRanganathan_tropadmissiblecovers}
Renzo Cavalieri, Hannah Markwig, and Dhruv Ranganathan, \emph{Tropicalizing the
  space of admissible covers}, Math. Ann. \textbf{364} (2016), no.~3-4,
  1275--1313.

\bibitem[CPJ19]{CookPowellJensen_gonality}
Kaelin Cook-Powell and David Jensen, \emph{Components of {Brill}-{Noether}
  {Loci} for {Curves} with {Fixed} {Gonality}}, arXiv:1907.08366 (2019).

\bibitem[CPJ20]{CookPowellJensen_HurwitzBrillNoether}
\bysame, \emph{Tropical {Methods} in {Hurwitz}-{Brill}-{Noether} {Theory}},
  arXiv:2007.13877 (2020).

\bibitem[FGPN14]{FrancoGarciaPradaNewstead}
Emilio Franco, Oscar Garcia-Prada, and P.~E. Newstead, \emph{Higgs bundles over
  elliptic curves}, Illinois J. Math. \textbf{58} (2014), no.~1, 43--96.

\bibitem[FJP20]{FarkasJensenPayne_Kodaira}
Gavril Farkas, David Jensen, and Sam Payne, \emph{The {Kodaira} dimensions of
  $\overline{\calM}_{22}$ and $\overline{\calM}_{23}$}, arXiv:2005.00622
  (2020).

\bibitem[FJP21]{FarkasJensenPayne_nonabelian}
Gavril Farkas, Dave Jensen, and Sam Payne, \emph{The non-abelian
  {Brill}-{Noether} divisor on $\overline{\calM}_{13}$ and the {Kodaira}
  dimension of $\overline{\calR}_{13}$}, arXiv:2110.09553 (2021).

\bibitem[Fr{\u{a}}21]{Fratila_revisitingelliptic}
Drago\c{s} Fr{\u{a}}{\c{t}}il{\u{a}}, \emph{Revisiting the moduli space of
  semistable {$G$}-bundles over elliptic curves}, Ann. Inst. Fourier (Grenoble)
  \textbf{71} (2021), no.~2, 615--641.

\bibitem[Gie84]{Gieseker}
D.~Gieseker, \emph{A degeneration of the moduli space of stable bundles}, J.
  Differential Geom. \textbf{19} (1984), no.~1, 173--206.

\bibitem[GK08]{GathmannKerber}
Andreas Gathmann and Michael Kerber, \emph{A {R}iemann-{R}och theorem in
  tropical geometry}, Math. Z. \textbf{259} (2008), no.~1, 217--230.

\bibitem[Gro57]{Grothendieck_vectorbundlesonP1}
A.~Grothendieck, \emph{Sur la classification des fibr\'{e}s holomorphes sur la
  sph\`ere de {R}iemann}, Amer. J. Math. \textbf{79} (1957), 121--138.

\bibitem[HM82]{HarrisMumford}
Joe Harris and David Mumford, \emph{On the {K}odaira dimension of the moduli
  space of curves}, Invent. Math. \textbf{67} (1982), no.~1, 23--88, With an
  appendix by William Fulton.

\bibitem[IKMZ19]{IKMZ}
Ilia Itenberg, Ludmil Katzarkov, Grigory Mikhalkin, and Ilia Zharkov,
  \emph{Tropical homology}, Math. Ann. \textbf{374} (2019), no.~1-2, 963--1006.

\bibitem[JL18]{JensenLen}
David Jensen and Yoav Len, \emph{Tropicalization of theta characteristics,
  double covers, and {P}rym varieties}, Selecta Math. (N.S.) \textbf{24}
  (2018), no.~2, 1391--1410.

\bibitem[JMT20]{MinchevaJunTolliver}
Jaiung Jun, Kalina Mincheva, and Jeffrey Tolliver, \emph{Vector bundles on
  tropical schemes}, arXiv:2009.03030 (2020).

\bibitem[JR21]{JensenRanganathan}
David Jensen and Dhruv Ranganathan, \emph{Brill-{N}oether theory for curves of
  a fixed gonality}, Forum Math. Pi \textbf{9} (2021), Paper No. e1, 33.

\bibitem[JRS18]{Lefschetz}
Philipp Jell, Johannes Rau, and Kristin Shaw, \emph{Lefschetz {$(1,1)$}-theorem
  in tropical geometry}, \'{E}pijournal Geom. Alg\'{e}brique \textbf{2} (2018),
  Art. 11, 27.

\bibitem[KS06]{KontsevichSoibelman}
Maxim Kontsevich and Yan Soibelman, \emph{Affine structures and
  non-{A}rchimedean analytic spaces}, The unity of mathematics, Progr. Math.,
  vol. 244, Birkh\"{a}user Boston, Boston, MA, 2006, pp.~321--385.

\bibitem[Lar21]{Larson_refinedBrillNoether}
Hannah~K. Larson, \emph{A refined {B}rill-{N}oether theory over {H}urwitz
  spaces}, Invent. Math. \textbf{224} (2021), no.~3, 767--790.

\bibitem[Las98]{Laszlo_elliptic}
Yves Laszlo, \emph{About {$G$}-bundles over elliptic curves}, Ann. Inst.
  Fourier (Grenoble) \textbf{48} (1998), no.~2, 413--424.

\bibitem[Lor18]{Lorscheid_blueprintsII}
Oliver Lorscheid, \emph{The geometry of blueprints: {P}art {II}: {T}its-{W}eyl
  models of algebraic groups}, Forum Math. Sigma \textbf{6} (2018), Paper No.
  e20, 90.

\bibitem[LU21]{LenUlirsch}
Yoav Len and Martin Ulirsch, \emph{Skeletons of {P}rym varieties and
  {B}rill-{N}oether theory}, Algebra Number Theory \textbf{15} (2021), no.~3,
  785--820.

\bibitem[Miy73]{Miyanishi}
Masayoshi Miyanishi, \emph{Some remarks on algebraic homogeneous vector
  bundles}, Number theory, algebraic geometry and commutative algebra, in honor
  of {Y}asuo {A}kizuki, 1973, pp.~71--93.

\bibitem[MN15]{MustataNicaise}
Mircea Musta\c{t}\u{a} and Johannes Nicaise, \emph{Weight functions on
  non-{A}rchimedean analytic spaces and the {K}ontsevich-{S}oibelman skeleton},
  Algebr. Geom. \textbf{2} (2015), no.~3, 365--404.

\bibitem[MR18]{MaclaganRincon}
Diane Maclagan and Felipe Rinc\'{o}n, \emph{Tropical ideals}, Compos. Math.
  \textbf{154} (2018), no.~3, 640--670.

\bibitem[MR20]{MaulikRanganathan}
Davesh Maulik and Dhruv Ranganathan, \emph{Logarithmic {Donaldson}-{Thomas}
  theory}, arXiv:2006.06603 (2020).

\bibitem[MT15]{MartensThaddeus_Grothendieck}
Johan Martens and Michael Thaddeus, \emph{Variations on a theme of
  {G}rothendieck}, arXiv:1807.11364 (2015).

\bibitem[Muk78]{Mukai}
Shigeru Mukai, \emph{Semi-homogeneous vector bundles on an {A}belian variety},
  J. Math. Kyoto Univ. \textbf{18} (1978), no.~2, 239--272.

\bibitem[MW21]{MolchoWise}
Samouil Molcho and Jonathan Wise, \emph{The logarithmic {Picard} group and its
  tropicalization}, arXiv:1807.11364 (2021).

\bibitem[MZ08]{MikhalkinZharkov}
Grigory Mikhalkin and Ilia Zharkov, \emph{Tropical curves, their {J}acobians
  and theta functions}, Curves and abelian varieties, Contemp. Math., vol. 465,
  Amer. Math. Soc., Providence, RI, 2008, pp.~203--230.

\bibitem[NS64]{NarasimhanSeshadri_moduli}
M.~S. Narasimhan and C.~S. Seshadri, \emph{Holomorphic vector bundles on a
  compact {R}iemann surface}, Math. Ann. \textbf{155} (1964), 69--80.

\bibitem[NS65]{NarasimhanSeshadri}
\bysame, \emph{Stable and unitary vector bundles on a compact {R}iemann
  surface}, Ann. of Math. (2) \textbf{82} (1965), 540--567.

\bibitem[NS97]{NagarajSeshadriI}
D.~S. Nagaraj and C.~S. Seshadri, \emph{Degenerations of the moduli spaces of
  vector bundles on curves. {I}}, Proc. Indian Acad. Sci. Math. Sci.
  \textbf{107} (1997), no.~2, 101--137.

\bibitem[NS99]{NagarajSeshadriII}
\bysame, \emph{Degenerations of the moduli spaces of vector bundles on curves.
  {II}. {G}eneralized {G}ieseker moduli spaces}, Proc. Indian Acad. Sci. Math.
  Sci. \textbf{109} (1999), no.~2, 165--201.

\bibitem[Oda71]{Oda_elliptic}
Tadao Oda, \emph{Vector bundles on an elliptic curve}, Nagoya Math. J.
  \textbf{43} (1971), 41--72.

\bibitem[Pfl17]{Pflueger}
Nathan Pflueger, \emph{Brill-{N}oether varieties of {$k$}-gonal curves}, Adv.
  Math. \textbf{312} (2017), 46--63.

\bibitem[PZ98]{PolishchukZaslow}
Alexander Polishchuk and Eric Zaslow, \emph{Categorical mirror symmetry: the
  elliptic curve}, Adv. Theor. Math. Phys. \textbf{2} (1998), no.~2, 443--470.

\bibitem[Ses00]{Seshadri_Triestelecturenotes}
C.~S. Seshadri, \emph{Degenerations of the moduli spaces of vector bundles on
  curves}, School on {A}lgebraic {G}eometry ({T}rieste, 1999), ICTP Lect.
  Notes, vol.~1, Abdus Salam Int. Cent. Theoret. Phys., Trieste, 2000,
  pp.~205--265.

\bibitem[She16]{Shen_Lefschetz}
Tif Shen, \emph{A {Lefschetz} {Hyperplane} {Theorem} for non-{Archimedean}
  {Jacobians}}, arXiv:1610.02417 (2016).

\bibitem[Spr98]{Springer_linalggroups}
T.~A. Springer, \emph{Linear algebraic groups}, second ed., Progress in
  Mathematics, vol.~9, Birkh\"{a}user Boston, Inc., Boston, MA, 1998.

\bibitem[Teo02]{TeodorescuBundlesOnChains}
Titus Teodorescu, \emph{Principal bundles over chains or cycles of rational
  curves}, Michigan Math. J. \textbf{50} (2002), no.~1, 173--186.

\bibitem[Tit57]{Tits_F1}
J.~Tits, \emph{Sur les analogues alg\'{e}briques des groupes semi-simples
  complexes}, Colloque d'alg\`ebre sup\'{e}rieure, tenu \`a {B}ruxelles du 19
  au 22 d\'{e}cembre 1956, Centre Belge de Recherches Math\'{e}matiques,
  \'{E}tablissements Ceuterick, Louvain; Librairie Gauthier-Villars, Paris,
  1957, pp.~261--289.

\bibitem[Tu93]{Tu_semistablebundles}
Loring~W. Tu, \emph{Semistable bundles over an elliptic curve}, Adv. Math.
  \textbf{98} (1993), no.~1, 1--26.

\bibitem[Uli17]{Ulirsch_tropisquot}
Martin Ulirsch, \emph{Tropicalization is a non-{A}rchimedean analytic stack
  quotient}, Math. Res. Lett. \textbf{24} (2017), no.~4, 1205--1237.

\bibitem[{Wei}38]{Weil}
Andr\'e {Weil}, \emph{{G\'en\'eralisation des fonctions ab\'eliennes}}, {J.
  Math. Pures Appl. (9)} \textbf{17} (1938), 47--87 (French).

\end{thebibliography}

\end{document}